\theoremstyle{definition}
\newtheorem{definition}{Definition}[section]
\theoremstyle{plain}
\newtheorem{theorem}[definition]{Theorem}
\newtheorem{proposition}[definition]{Proposition}
\newtheorem{corollary}[definition]{Corollary}
\newtheorem{lemma}[definition]{Lemma}
\newtheorem{remark}[definition]{Remark}
\newcommand{\nc}{\newcommand}
\nc{\weak}{\rightharpoonup}
\nc{\weakstar}{\stackrel{\ast}{\rightharpoonup}} 
\renewcommand{\div}{{{\mathrm{div}}_x}\,}
\newcommand{\vrho}{\varrho}
\nc{\modular}[1]{{\stackrel{ #1}{\longrightarrow\,}}}
\def\vec#1{\boldsymbol{#1}}
\newcommand{\anl}{a_\veps^{n+l}}
\newcommand{\an}{a_\veps^{n+1}}
\newcommand{\vu}{\vec{u}}
\newcommand{\vr}		{\vrho}
\newcommand{\vre}		{\vr_\varepsilon}
\newcommand{\vrez}	{\vr_{0,\varepsilon}}
\newcommand{\ue}		{\vec{u}_\varepsilon}
\newcommand{\uez}		{\vec{u}_{0,\varepsilon}}
\newcommand{\ep}		{\varepsilon}
\newcommand{\ds}		{\,{\rm d}s}
\newcommand{\dx}		{\,{\rm d}x}
\newcommand{\dt}		{\, {\rm d}t}
\newcommand{\detau}		{\, {\rm d}\tau}
\newcommand{\dxdt}	{\, {\rm d}x{\rm d}t}
\newcommand{\mc}{\mathcal}
\newcommand{\veps}{\varepsilon}
\newcommand{\what}{\widehat}
\newcommand{\vphi}{\varphi}
\newcommand{\oline}{\overline}
\newcommand{\R}{\mathbb{R}}
\newcommand{\N}{\mathbb{N}}
\newcommand{\Z}{\mathbb{Z}}
\newcommand{\TT}{\mathbb{T}}
\renewcommand{\div}{{\rm div}\,}
\newcommand{\curl}{{\rm curl}\,}
\newcommand{\Supp}{{\rm Supp}\,}
\def\d{\partial}
\def\div{{\rm div}\,}
\title{\LARGE \bf{Fast rotation limit for the 2-D\\
non-homogeneous incompressible Euler equations
}}
\author{ 
\textsl{Gabriele Sbaiz}$\,^{1,2}$
 \vspace{.2cm} \\
\footnotesize{$\,^1\;$ \textsc{Universit\`a degli Studi di Trieste}, \textit{Dipartimento di Matematica e Geoscienze},} \\
\footnotesize{Via Valerio 12/1, 34127 Trieste, Italy} \vspace{0.1cm} \\
\footnotesize{$\,^2\;$ \textsc{Univ. Lyon, Universit\'e Claude Bernard Lyon 1}, CNRS UMR 5208, \textit{Institut Camille Jordan},} \\
\footnotesize{43 blvd. du 11 novembre 1918, F-69622 Villeurbanne cedex, France} \vspace{0.1cm} \\
\vspace{.2cm} \\
\footnotesize{
\ttfamily{gabriele.sbaiz@phd.units.it}} \vspace{.1cm}
}
\date{\small \today}
\begin{document}
\maketitle

\abstract{In the present paper, we study the fast rotation limit for the density-dependent incompressible Euler equations in two space dimensions with the presence of the Coriolis force. In the case when the initial densities are small perturbation of a constant profile, we show the convergence of solutions towards the solutions of a quasi-homogeneous incompressible Euler system. The proof relies on a combination of uniform estimates in high regularity norms with a compensated compactness argument for passing to the limit. This technique allows us to treat the case of ill-prepared initial data.}

\paragraph*{\small 2010 Mathematics Subject Classification:}{\footnotesize 35Q86 
(primary);
35B25, 
76U05, 
35B40, 
76M45 
(secondary).}

\paragraph*{\small Keywords:} {\footnotesize density-dependent incompressible Euler system; singular perturbation problem; low Rossby  number; compensated compactness.}

\section{Introduction}
In this paper, we are interested in describing the evolution of the density and the velocity field of a fluid in a rotational framework. The density $\vrho=\vrho (t,x)$ is a scalar function belonging to $\R_+$ and $\vec u=\vec u(t,x)$ represents the velocity field of the fluid on $\R^2$. The choice of the $\R^2$ plane is motivated by the fact that the $2$-D setting is relevant for fluids in a fast rotation regime, like currents in the oceans under the Earth's rotational effects. Indeed, the motion of a $3$-D highly rotating fluid is, in a first approximation, planar: this property is the so-called \textit{Taylor-Proudamn theorem} (see Chapter 7 of \cite{C-R}, Chapter 2 of \cite{Ped} and Chapter 2 of \cite{Val} for details in this respect). Then, the mathematical model we are going to consider, describes the dynamics of flows on large scales which occurs in nature (i.e. the so-called \textit{geophysical flows}). In our model we keep three characteristics which are relevant from the physical side: the fluid is supposed to be \textit{non-homogeneous} (we deal with variations of the density), \textit{incompressible} (the volumes are preserved along the motion) and \textit{inviscid} (the viscosity effects are neglected). Therefore, the system describing the 2-D evolution of the fluid, reads
\begin{equation}\label{full Euler}
\begin{cases}
\d_t \vrho +\div (\vrho \vu)=0\\
\d_t (\vrho \vu)+\div (\vrho \vu \otimes \vu)+ \frac{1}{\mathfrak{Ro}}\vrho \vu^\perp+\nabla P=0\\
\div \vu =0
\end{cases}
\end{equation}
in the domain $\Omega =\R^2$. 

The pressure term $\nabla P$, where $P=P(t,x)\in \R$, represents the Lagrangian multiplier associated to the divergence-free constraint on the velocity field. 
In addition, the rotational effects (due to the rotation of the Earth) are translated in the system \eqref{full Euler} by the presence of the Coriolis force $\frac{1}{\mathfrak{Ro}}\, \vrho \vu^\perp$, where $\vu^\perp:=(-u_2, u_1)$ is the rotation of angle $\pi/2$ of the velocity field $\vu=(u_1,u_2)$. This simple form for the Coriolis term is a physically well-justified approximation of that force at mid-latitudes: we consider the motion far from the poles and far from the equator (see again Chapter 7 of \cite{C-R}, Chapter 1 of \cite{Ped} and Chapter 2 of \cite{Val} for useful insight). Finally, the adimensional number $\mathfrak{Ro}$ in \eqref{full Euler} is the so-called \textit{Rossby number}. It represents the inverse of the rotation speed: choosing the Rossby number small will mean considering fast rotational effects in the dynamics. Specifically, we introduce the following scaling: given $\veps \in \; ]0,1]$ we take     
\begin{equation}\label{scaling}
 \mathfrak{Ro}=\veps .
\end{equation}
The main scope of our analysis will be to study the asymptotic behaviour of the system \eqref{full Euler} when $\veps\rightarrow 0$. We refer to \cite{C-D-G-G} for an overview of the broad literature in the context of homogeneous rotating fluids (see \cite{B-M-N_EMF} and \cite{B-M-N_AA} for the pioneering studies). 

Similar problems to the one presented in \eqref{full Euler} have been studied by several authors, who in different ways inspected the well-posedness issues and the asymptotic analysis of models for geophysical flows. For example, in the context of compressible fluids,
we refer to \cite{B-D}, \cite{G_2008}, \cite{G-SR_Mem} for the first works on $2$-D viscous shallow water models (see also \cite{D_ARMA}, \cite{D-M}, \cite{D-M-S} for the inviscid case), to \cite{F-G-GV-N}, \cite{F-G-N}, \cite{F-N_AMPA}, \cite{F-N_CPDE} for the barotropic Navier-Stokes system and to \cite{N-S_DCDS} for weakly compressible and inviscid fluids (see also \cite{F-N} for other singular limits in thermodynamics of viscous fluids).
In the compressible case, the fact that  the pressure is a given function of the density implies a double advantage in the analysis: on the one hand, one can recover good uniform bounds for the oscillations (from the reference state) of the density; on the other hand, at the limit, one disposes of a stream-function relation between the densities and the velocities. 

On the contrary, although the incompressibility condition is physically well-justified for the geophysical fluids, only few studies tackle this case. We refer to \cite{Fan-G}, in which Fanelli and Gallagher have studied the fast rotation limit for viscous incompressible fluids with variable density. In the case when the initial density is a small perturbation of a constant state (the so-called \textit{slightly non-homogeneous} case), they proved convergence to the quasi-homogeneous type system. Instead, for general non-homogeneous fluids (the so-called \textit{fully non-homogeneous} case), they have showed that the limit dynamics is described in terms of the vorticity and the density oscillation function, since they lack enough regularity to prove convergence on the momentum equation itself (see more details below).

We have also to mention \cite{C-F_RWA}, where the authors rigorously prove the convergence of the ideal magnetohydrodynamics (MHD) equations towards a quasi-homogeneous type system (see also \cite{C-F_Nonlin} in this respect). Their method relies on a relative entropy inequality for the primitive system that allows to treat also the inviscid limit but requires well-prepared initial data. 


In the present paper, we tackle the asymptotic analysis (for $\veps\rightarrow 0$) in the case of density-dependent Euler system in the \textit{slightly non-homogeneous} context, i.e. when the initial density is a small perturbation of order $\veps$ of a constant profile (say $\overline{\vrho}=1$). These small perturbations around a constant reference state are physically justified by the so-called \textit{Boussinesq approximation} (see e.g. Chapter 3 of \cite{C-R} or Chapter 1 of \cite{Maj} in this respect). As a matter of fact, since the constant state $\overline{\vrho}=1$ is transported by a divergence-free vector field, the density can be written as $\vrho_\veps=1+\veps R_\veps$ at any time (provided this is true at $t=0$), where one can state good uniform bounds on $R_\veps$. We also point out that in the momentum equation of \eqref{full Euler}, with the scaling introduced in \eqref{scaling}, the Coriolis term can be rewritten as
\begin{equation}\label{Coriolis}
 \frac{1}{\veps}\vrho_\veps \vec u_\veps^\perp=\frac{1}{\veps}\vec u_\veps^\perp+R_\veps\vec u_\veps^\perp \, .
\end{equation}
We notice that, thanks to the incompressibility condition, the former term on the right-hand side of \eqref{Coriolis} is actually a gradient: it can be ``absorbed'' into the pressure term, which must scale as $1/\veps$. In fact, the only force that can compensate the effect of fast rotation in system \eqref{full Euler} is, at geophysical scale, the pressure term: i.e. we can write $\nabla P_\veps= (1/\veps) \, \nabla \Pi_\veps$.

Let us point out that the \textit{fully non-homogeneous} case (where the initial density is a perturbation of an arbitrary state) is out of our study. This case is more involved and new technical troubles arise in the well-posedness analysis and in the asymptotic inspection. Indeed, as already highlighted in \cite{Fan-G} for the Navier-Stokes-Coriolis system, the limit dynamics is described by an underdetermined system which mixes the vorticity and the density fluctuations.
In order to depict the full limit dynamics (where the limit density variations and the limit velocities are decoupled), one had to assume stronger \textit{a priori} bounds than the ones which could be obtained by classical energy estimates. Nonetheless, the higher regularity involved is \textit{not} propagated uniformly in $\veps$ in general, due to the presence of the Coriolis term. 
In particular, the structure of the Coriolis term is more complicated  than the one in \eqref{Coriolis} above, since one has $\vrho_{\veps}=\oline \vrho+ \veps \sigma_\veps$ (with $\sigma_\veps$'s the fluctuations), if at the initial time we assume $\vrho_{0, \veps}=\oline \vrho+ \veps R_{0,\veps}$, where $\oline \vrho$ represents the arbitrary reference state. At this point, if one plugs the previous decomposition of $\vrho_\veps$ in \eqref{Coriolis}, a term of the form $(1/\veps)\, \oline \vrho \ue^\perp$ appears: this term is a source of troubles in order to propagate the $H^s$ estimates.

Equivalently, if one ties to divide the momentum equation in \eqref{full Euler} by the density $\vrho_\veps$, then the previous issue is only translated on the analysis of the pressure term, which becomes $1/(\veps \vrho_\veps)\, \nabla \Pi_\veps$.

In light of all the foregoing discussion, let us now point out the main difficulties arising in our work.

First of all, our model is an inviscid and hyperbolic type system for which we can expect \textit{no} smoothing effects and \textit{no} gain of regularity. For that reason, it is natural to look at equations in \eqref{full Euler} in a regular framework like the $H^s$ spaces with $s>2$. The Sobolev spaces $H^s(\R^2)$, for $s>2$, are in fact embedded in the space $W^{1,\infty}$ of globally Lipschitz functions: this is a minimal requirement to preserve the initial regularity (see e.g. Chapter 3 of \cite{B-C-D} and also \cite{D_JDE}, \cite{D-F_JMPA} for a broad discussion on this topic). 
As a matter of fact, all the Besov spaces $B^s_{p,r}(\R^d)$ which are embedded in $W^{1,\infty}(\R^d)$, a fact that occurs for $(s,p,r)\in \R\times [1,+\infty]^2$ such that 
\begin{equation}\label{Lip_assumption}
s>1+\frac{d}{p} \quad \quad \quad \text{or}\quad \quad \quad s=1+\frac{d}{p} \quad \text{and}\quad r=1\, ,
\end{equation}
are good candidates for the well-posedness analysis. However, the choice of working in $H^s\equiv B^s_{2,2}$ is dictated by the presence of the Coriolis force: we will deeply exploit the antisymmetry of this singular term. 

Moreover, the fluid is assumed to be incompressible, so that the pressure term is just a Lagrangian multiplier and does \textit{not} give any information on the density, unlike in the compressible case. In addition, due to the non-homogeneity, the analysis of the gradient of the pressure term is much more involved since we have to deal with an elliptic equation with \textit{non-constant} coefficients, namely 
\begin{equation}\label{elliptic_eq}
-\div (A \, \nabla P)=\div F \quad \text{where}\quad \div F:=\div \left(\vu \cdot \nabla \vu+ \frac{1}{\mathfrak{Ro}} \vu^\perp \right)\quad \text{and}\quad A:=1/\vrho \, .
\end{equation}
The main difficulty is to get appropriate \textit{uniform} bounds (with respect to the rotation parameter) for the pressure term in the regular framework we will consider 
(we refer to \cite{D_JDE} and \cite{D-F_JMPA} for more details). 

Once we have analysed the pressure term, we will show the local well-posedness for system \eqref{full Euler} in the $H^s$ setting (see Theorem \ref{W-P_fullE} below). It is worth to notice that, in Theorem \ref{W-P_fullE} below, all the estimates are \textit{uniform} with respect to the rotation parameter and, in addition, we have that the time of existence is independent of $\veps$. 

With the local well-posedness result at the hand, we perform the fast rotation limit for general \textit{ill-prepared} initial data. We will show the convergence of system \eqref{full Euler} towards what we call quasi-homogeneous incompressible Euler system 
\begin{equation}\label{Q-H_E_intro}
\begin{cases}
\d_t R+\div (R\vu)=0 \\
\d_t \vec u+\div \left(\vec{u}\otimes\vec{u}\right)+R\vu^\perp+ \nabla \Pi =0 \\
\div \vec u\,=\,0\,, 
\end{cases}
\end{equation}  
where $R$ represents the limit of fluctuations $R_\veps$ (see Theorem \ref{thm:limit_dynamics} for details). 
We also point out that in the momentum equation of \eqref{Q-H_E_intro} a non-linear term of lower order (i.e. $R\vec u^\perp$) appears: it is a sort of remainder in the convergence for the Coriolis term, recasted as in \eqref{Coriolis}.

Passing to the limit in the momentum equation of \eqref{full Euler} is no more evident, although we are in the $H^s$ framework: the Coriolis term is responsible for strong in time oscillations of solutions (the so-called \textit{Poincar\'e waves}) which may prevent the convergence of the convective term towards the one of \eqref{Q-H_E_intro}. To overcome this issue, we employ an approach based on a compensated compactness argument (see e.g. \cite{Fan-G} and reference therein, in the case of viscous fluids). This technique was firstly applied to the barotropic Navier-Stokes equations by Lions and Masmoudi in \cite{L-M} and later developed in the fast rotation, incompressible and homogeneous case by Gallagher and Saint-Raymond in \cite{G-SR_2006}. The strategy consists in making use of the algebraic structure hidden behind the system (recasted as a wave system) to reveal strong convergence properties for special quantities: in our case, $\gamma_\veps:=\curl (\vrho_\veps \ue)$ (see Section \ref{ss:wave_system} below).
We refer also to \cite{C-F_RWA}, for a different approach based on  relative entropy inequalities for the primitive equations to prove the convergence towards the limit system.

Now, once the limit system is rigorously depicted, one could address its well-posedness issue: it is worth noticing that system \eqref{Q-H_E_intro} is \textit{not} globally well-posed even in two dimensions. However, roughly speaking, for $R_0$ small enough, the system \eqref{Q-H_E_intro} is ``close'' to the $2$-D homogeneous and incompressible Euler system, for which it is well-known the global well-posedness. Thus, it is natural to wonder if there exists an ``asymptotically global'' well-posedness result in the spirit of \cite{D-F_JMPA} and \cite{C-F_sub}: for small initial fluctuations $R_0$, the quasi-homogeneous system \eqref{Q-H_E_intro} behaves like the standard Euler equations and the lifespan of its solutions tends to infinity. In particular, as already shown in \cite{C-F_sub} for the quasi-homogeneous ideal MHD system (see also references therein) the lifespan goes as  
\begin{equation}\label{lifespan_Q-H}
T_\delta^\ast \sim \log \log \frac{1}{\delta}	\,, 
\end{equation}
where $\delta>0$ is the size of the initial fluctuations (see Theorem \ref{thm:well-posedness_Q-H-Euler} below). 

The result for the time of existence of solutions to \eqref{Q-H_E_intro} pushes our attention to the study of the lifespan of solutions to the primitive system \eqref{full Euler}.
For the $3$-D \textit{homogeneous} Euler system with the Coriolis force, Dutrifoy in \cite{Dut} has proved that the lifespan of solutions tends to infinity in the fast rotation regime (see also \cite{Gall}, \cite{Cha} and \cite{Scro}, where the authors inspected the lifespan of solutions in the context of viscous homogeneous fluids). For system \eqref{full Euler} it is not clear to us how to find similar stabilization effects (due to the Coriolis term), in order to improve the lifespan of the solutions: for instance to show that $T_\veps^\ast\rightarrow +\infty$ when $\veps\rightarrow 0$. Nevertheless, independently of the rotational effects, we are able to state an ``asymptotically global'' well-posedness result in the regime of \textit{small} oscillations, in the sense of \eqref{lifespan_Q-H}: namely, when the size of the initial fluctuation $R_{0,\veps}$ is small enough, of size $\delta >0$, the lifespan $T^\ast_\veps$ of the corresponding solution to system \eqref{full Euler} can be bounded from below by $T^\ast_\veps\geq T^\ast(\delta)$, with $T^\ast (\delta)\rightarrow +\infty$ when $\delta\rightarrow 0$
(see also \cite{D-F_JMPA} for a density-depend fluid in the absence of the Coriolis force). As an immediate corollary of the previous lower bound, if we consider the initial densities of the form $\vrho_{0,\veps}=1+\veps^{1+\alpha}R_{0,\veps}$ with $\alpha >0$, then we get $T^\ast_\veps\sim \log \log (1/\veps)$.
We refer to Theorem \ref{W-P_fullE} below for the precise statement.

At this point, let us sketch the main steps to show \eqref{lifespan_Q-H} for the primitive system \eqref{full Euler}.

The key point in the proof of \eqref{lifespan_Q-H} is to study the lifespan of solutions in critical Besov spaces. In those spaces, we can take advantage of the fact that, when $s=0$, the $B^0_{p,r}$ norm of solutions can be bounded \textit{linearly} with respect to the Lipschitz norm of the velocity, rather than exponentially (see the works \cite{Vis} by Vishik and \cite{H-K} by Hmidi and Keraani). Since the triplet $(s,p,r)$ has to satisfy \eqref{Lip_assumption}, the lowest regularity Besov space we can reach is $B^1_{\infty,1}$. Then if $\vec u$ belongs to $B^1_{\infty,1}$, the vorticity $\omega := -\d_2 u_1+\d_1 u_2$ has the desired regularity to apply the quoted improved estimates by Hmidi-Keraani and Vishik (see Theorem \ref{thm:improved_est_transport} in the Appendix). 
Analysing the vorticity formulation of the system, we discover that the $\curl$ operator cancels the singular effects produced by the Coriolis force (in this respect, see equation \eqref{curl_eq} below). That cancellation is not apparent, since the skew-symmetric property of the Coriolis term is out of use in the critical framework considered.

Finally, we need a continuation criterion (in the spirit of Baele-Kato-Majda criterion, see \cite{B-K-M}) which guarantees that we can ``measure'' the lifespan of solutions indistinctly in the space of lowest regularity index, namely $s=r=1$ and $p=+\infty$. That criterion is valid under the assumptions that 
$$
\int_0^{T}  \big\| \nabla \vec u(t) \big\|_{L^\infty}  \dt < +\infty\qquad \text{with}\qquad T<+\infty\, .
$$

We refer to Subsection \ref{ss:cont_criterion+consequences} below for more detailed consequences of the previous continuation criterion.

\medbreak
Let us now give a more precise overview of the contents of the paper.
In the next section, we collect our assumptions and we state our main results. 
In Section \ref{s:well-posedness_original_problem}, we investigate the well-posedness issues in the Sobolev spaces $H^s$ for any $s>2$.
In Section \ref{s:sing-pert}, we study the singular perturbation problem, establishing constraints that the limit points have to satisfy and proving the convergence to the quasi-homogeneous Euler system thanks to a \textit{compensated compactness} technique. 
In Section \ref{s:well-posedness_Q-H} we review, for the limit system \eqref{Q-H_E_intro}, the results presented in \cite{C-F_RWA} and \cite{C-F_sub}, and 
we explicitly derive the lifespan of solutions to equations \eqref{Q-H_E_intro} (see relation \eqref{improved_low_bound}).

In the last section, we deal with the lifespan analysis for system \eqref{full Euler} and we point out some consequences of the continuation criterion we have established (see in particular Subsection \ref{ss:cont_criterion+consequences}).

\paragraph*{Some notation and conventions.} \label{ss:notations}

The symbol $C_c^\infty (\R^2)$ denotes the space of $\infty$-times continuously differentiable functions on $\R^2$, having compact support in $\R^2$. The space $\mc D^{\prime}(\R^2)$ is the space of
distributions on $\R^2$. 
We use also the notation $C^0_w([0,T];X)$, with $X$ a Banach space, to refer to the space of continuous in time functions with values in $X$ endowed with its weak topology. 
Given $p\in[1,+\infty]$, by $L^p(\R^2)$ we mean the classical space of Lebesgue measurable functions $g$, where $|g|^p$ is integrable over the set $\R^2$ (with the usual modifications for the case $p=+\infty$).
We use also the notation $L_T^p(L^q)$ to indicate the space $L^p\big([0,T];L^q(\R^2)\big)$ with $T>0$.
Given $k \geq 0$, we denote by $H^{k}(\R^2)$ the Sobolev space of functions which belongs to $L^2(\R^2)$ together with all their derivatives up to order $k$. 
Moreover, the notation $B^s_{p,r}(\R^2)$ stands for the Besov spaces in $\R^2$ that are interpolation spaces between the Sobolev ones (we refer to Paragraph \ref{ss:LP_theory} in the Appendix for a more detailed discussion). 

For the sake of simplicity, we will omit from the notation the set $\R^2$, that we will explicitly point out if needed.

In the whole paper, the symbols $c $ and $C$ will denote generic multiplicative constants, which may change from line to line, and which do not depend on the small parameter $\veps$.
Sometimes, we will explicitly point out the quantities that these constants depend on, by putting them inside brackets. We agree to write $f\sim g$ whenever we have $c\, g\leq f \leq C\, g$.

Let $\big(f_\veps\big)_{0<\veps\leq1}$ be a family of functions in a normed space $X$. If this family is bounded in $X$,  we use the notation $\big(f_\veps\big)_{\veps} \subset X$.

\subsection*{Acknowledgements}
{\small
The author is member of the Italian Institute for Advanced Mathematics (INdAM) group and his work has been partially supported by the project CRISIS (ANR-20-CE40-0020-01), operated by the French National Research Agency (ANR). 

The author also acknowledge Daniele Del Santo and Francesco Fanelli for their insightful remarks. 

\section{Setting of the problem and main results} \label{s:result}

In this section, we formulate our working hypotheses (Subsection \ref{ss:FormProb}) and we state our main results
(Subsection \ref{ss:results}).

\subsection{Formulation of the problem} \label{ss:FormProb}

In this subsection, we present the rescaled density-dependent Euler equations with the Coriolis force, which we are going to consider in our study, and we formulate the main working hypotheses.

To begin with, let us introduce the ``primitive system'', that is the rescaled incompressible Euler system \eqref{full Euler}, supplemented with the scaling \eqref{scaling}, where $\veps \in \,]0,1]$ is a small parameter. Thus, the system consists of continuity equation (conservation of mass), the momentum equation and the divergence-free condition: respectively 
\begin{equation}\label{Euler_eps}
\begin{cases}
\d_t \vrho_{\veps} +\div (\vrho_{\veps} \vu_{\veps})=0\\
\d_t (\vrho_{\veps} \vu_{\veps})+\div (\vrho_{\veps} \vu_{\veps} \otimes \vu_{\veps})+ \frac{1}{\veps}\vrho_{\veps} \vu_{\veps}^{\perp}+\frac{1}{\veps}\nabla \Pi_\veps=0\\
\div \vu_{\veps} =0\, .
\end{cases}
\end{equation}
The unknowns are the fluid mass density $\vre=\vre(t,x)\geq0$ and its velocity field $\ue=\ue(t,x)\in\R^2$ with  $t\in \R_+$, $x\in \R^2$.

In \eqref{Euler_eps}, the pressure term has to scale like $1/\veps$, since it is the only force that allows to compensate the effect of fast rotation, at the geophysical scale.

From now on, in order to make condition \eqref{Lip_assumption} holds, we fix 
$$ s>2\, . $$

We assume that the initial density is a small perturbation of a constant profile.
Namely, we consider initial densities of the following form:
	\begin{equation}\label{in_vr}
	\vrez = 1 + \ep \, R_{0,\veps} 
	\end{equation}
where we suppose $R_{0,\veps}$ to be a bounded measurable function satisfying the controls
	\begin{align}
&\sup_{\veps\in\,]0,1]}\left\|  R_{0,\veps} \right\|_{L^\infty(\R^2)}\,\leq \, C\, ,\label{hyp:ill_data_R_0}\\
&\sup_{\veps\in\,]0,1]}\left\| \nabla R_{0,\veps} \right\|_{H^{s-1}(\R^2)}\,\leq \, C \label{hyp:ill_data_nablaR_0}
	\end{align}
and the initial mass density is bounded and bounded away from zero, i.e. for all $\veps \in\;]0,1]$: 
\begin{equation}\label{assumption_densities}
0<\underline{\vrho}\leq   \vrho_{0,\veps}(x) \,\leq \, \overline{\vrho}\, , \qquad x\in \R^2
\end{equation}
where $\underline{\vrho},\overline{\vrho}>0$ are positive constants.

As for the initial velocity fields, due to framework needed for the well-posedness issues, we require the following uniform bound
\begin{equation}\label{hyp:data_u_0}
\sup_{\veps\in\,]0,1]}\left\|  \vu_{0,\veps} \right\|_{H^s(\R^2)}\,\leq \, C\, .
\end{equation}
Thanks to the previous uniform estimates, we can assume (up to passing to subsequences) that there exist $R_0 \in W^{1,\infty}(\R^2)$, with $\nabla R_0\in H^{s-1}(\R^2)$, and $\vec u_0\in H^s(\R^2)$ such that 
\begin{equation}\label{init_limit_points}
\begin{split}
R_0:=\lim_{\veps \rightarrow 0}R_{0,\veps} \quad &\text{in}\quad L^\infty (\R^2) \\
\nabla R_0:=\lim_{\veps \rightarrow 0}\nabla R_{0,\veps}\quad &\text{in}\quad H^{s-1} (\R^2)\\
\vu_0:=\lim_{\veps \rightarrow 0}\vu_{0,\veps}\quad &\text{in}\quad H^s (\R^2)\, ,
\end{split}
\end{equation} 
where we agree that the previous limits are taken in the corresponding weak-$\ast$ topology.

\subsection{Main results}\label{ss:results}

\medbreak
We can now state our main results. We recall the notation $\big(f_\veps\big)_{\veps} \subset X$ to denote that the family $\big(f_\veps\big)_{\veps}$ is uniformly (in $\veps$) bounded in $X$.

The following theorem establishes the local well-posedness of system \eqref{Euler_eps} in the Sobolev spaces $B^s_{2,2}\equiv H^s$ (see Section \ref{s:well-posedness_original_problem}) and gives a lower bound for the lifespan of solutions (see Section \ref{s:lifespan_full}). 

\begin{theorem}\label{W-P_fullE}
For any $\veps \in\, ]0,1]$, let initial densities $\vrho_{0,\veps}$ be as in \eqref{in_vr} and satisfy the controls \eqref{hyp:ill_data_R_0} to \eqref{assumption_densities}. Let $\vu_{0,\veps}$ be divergence-free vector fields such that $\vu_{0,\veps} \in H^s(\R^2)$ for $s>2$. \\
Then, for any $\veps>0$, 
there exists a time $T_\veps^\ast >0$
such that 
the system \eqref{Euler_eps} has a unique solution $(\vrho_\veps, \vu_\veps, \nabla \Pi_\veps)$ where 
\begin{itemize}
\item $\vrho_\veps$ belongs to the space $C^0([0,T_\veps^\ast]\times \R^2)$ with $\nabla \vrho_\veps \in  C^0([0,T_\veps^\ast]; H^{s-1}(\R^2))$;
\item $\vu_\veps$ belongs to the space $C^0([0,T_\veps^\ast]; H^s(\R^2))$;
\item $\nabla \Pi_\veps$ belongs to the space $C^0([0,T_\veps^\ast]; H^s(\R^2))$.
\end{itemize}
Moreover, the lifespan $T_\veps^\ast$ of the solution to the two-dimensional density-dependent incompressible Euler equations with the Coriolis force is bounded from below by
\begin{equation}\label{improv_life_fullE}
\frac{C}{\|\vec u_{0,\veps}\|_{H^s}}\log\left(\log\left(\frac{C\, \|\vec u_{0,\veps}\|_{H^s}}{\max \{\mc A_\veps(0),\, \veps \, \mc A_\veps(0)\, \|\vec u_{0,\veps}\|_{H^s}\}}+1\right)+1\right)\, ,
\end{equation}
where $\mc A_\veps (0):= \|\nabla R_{0,\veps}\|_{H^{s-1}}+\veps\, \|\nabla R_{0,\veps}\|_{H^{s-1}}^{\lambda +1}$, for some suitable $\lambda\geq 1$.

In particular, there exists a time $T^\ast >0$ such that $$\inf_{\veps>0}T_\veps^\ast \geq T^\ast >0\, .$$
\end{theorem}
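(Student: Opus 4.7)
The proof naturally splits into two parts: establishing local well-posedness in $H^s$ with a time of existence which is uniform in $\veps$, and then sharpening this into the doubly logarithmic lower bound \eqref{improv_life_fullE}. Throughout I would exploit the decomposition $\vrho_\veps = 1 + \veps R_\veps$, which combined with $\div \ue = 0$ reduces the continuity equation to the pure transport equation $\d_t R_\veps + \ue \cdot \nabla R_\veps = 0$. Dividing the momentum equation by $\vrho_\veps$ (which is bounded above and away from zero by \eqref{assumption_densities} for $\veps$ small), the primitive system is recast as
\begin{equation*}
\d_t \ue + \ue\cdot\nabla \ue + \frac{1}{\veps}\ue^\perp + \frac{1}{\veps\,\vrho_\veps}\nabla \Pi_\veps = 0,\qquad \div \ue = 0.
\end{equation*}
Approximate solutions $(R_\veps^n,\ue^n,\nabla\Pi_\veps^n)$ would then be built through a Friedrichs-type mollification or a standard linear iteration scheme, where the density is transported by the previous velocity iterate and the velocity is recovered by solving a linear hyperbolic problem with the divergence-free constraint.

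The first non-trivial step is the control of the pressure. Taking the divergence of the divided momentum equation yields the elliptic problem with non-constant coefficient
\begin{equation*}
-\div\!\left(\frac{1}{\vrho_\veps}\nabla\Pi_\veps\right) = \veps\,\div(\ue\cdot\nabla \ue) - \curl\ue,
\end{equation*}
in which the would-be singular term $\frac{1}{\veps}\div(\ue^\perp) = -\frac{1}{\veps}\curl\ue$ is replaced, after the division by $\vrho_\veps$, by the harmless $O(1)$ source $-\curl\ue$. Since $1/\vrho_\veps = 1 + O(\veps)$ in $W^{1,\infty}$ thanks to \eqref{hyp:ill_data_R_0}--\eqref{hyp:ill_data_nablaR_0}, standard elliptic regularity combined with a perturbative Neumann-series argument in the spirit of \cite{D_JDE,D-F_JMPA} gives
\begin{equation*}
\|\nabla\Pi_\veps\|_{H^s} \,\lesssim\, \bigl(1+\veps\,\|\nabla R_\veps\|_{H^{s-1}}\bigr)\Bigl(\|\ue\|_{H^s}^2 + \|\ue\|_{H^s}\Bigr),
\end{equation*}
uniformly in $\veps\in\,]0,1]$. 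This is the step I expect to be the main technical obstacle, and also the one where the scaling $\frac{1}{\veps}\nabla \Pi_\veps$ pays off: the extra factor $\veps$ in front of the convective source exactly balances the $1/\veps$ in the pressure scaling.

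With the pressure under control, the uniform $H^s$ estimate for $\ue$ is obtained in the standard way: applying $\Lambda^s$ to the velocity equation and testing against $\Lambda^s\ue$, the Coriolis contribution vanishes by antisymmetry ($\ue\cdot\ue^\perp=0$) modulo a Moser-type commutator that is absorbable, the convective term is handled through Kato--Ponce commutator estimates, and the pressure term is estimated via the bound above. In parallel, differentiating the transport equation for $R_\veps$ and using Sobolev product rules yields $\frac{d}{dt}\|\nabla R_\veps\|_{H^{s-1}} \lesssim \|\nabla \ue\|_{L^\infty}\|\nabla R_\veps\|_{H^{s-1}} + \|\ue\|_{H^s}\|\nabla R_\veps\|_{L^\infty}$, while $\|R_\veps\|_{L^\infty}$ is conserved along the flow. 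Combining these bounds produces a differential inequality of the form $\frac{d}{dt}E_\veps \leq C\,\Phi(E_\veps)$ for a suitable energy $E_\veps \sim \|\ue\|_{H^s} + \|\nabla R_\veps\|_{H^{s-1}} + \veps\,\|\nabla R_\veps\|^{\lambda+1}_{H^{s-1}}$, from which a lifespan $T^*_\veps$ bounded below by a positive constant $T^*$ independent of $\veps$ follows by standard Gronwall/bootstrap arguments. Uniqueness is then obtained through an energy estimate on the difference of two solutions at lower regularity (stability in $L^2$), and convergence of the scheme together with time continuity in the prescribed spaces is obtained by compactness arguments adapted to the hyperbolic setting.

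The improved lower bound \eqref{improv_life_fullE} is finally derived by passing to the vorticity formulation and working in the critical Besov space $B^1_{\infty,1}$. The essential cancellation is that $\curl(\ue^\perp)=\div\ue=0$, so the singular Coriolis contribution drops out of the vorticity equation, leaving a transport equation for $\omega_\veps$ with a source depending on $R_\veps$, $\nabla \Pi_\veps$ and $\ue$ only in an $O(1)$ way. In this framework the linear transport estimate of Vishik and Hmidi--Keraani (cf.\ Theorem \ref{thm:improved_est_transport}) grants a linear, rather than exponential, growth of $\|\omega_\veps\|_{B^0_{\infty,1}}$ in terms of $\int_0^t \|\nabla \ue\|_{L^\infty}$, while a Biot--Savart-type argument recovers $\|\nabla \ue\|_{L^\infty}$ from the vorticity up to a logarithmic loss. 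Inserting these gains into a Beale--Kato--Majda type continuation criterion produces two successive logarithmic improvements and hence the doubly logarithmic estimate \eqref{improv_life_fullE}; the extra factor $\veps\,\mc A_\veps(0)\,\|\vec u_{0,\veps}\|_{H^s}$ in the maximum is precisely the contribution of the $\veps$-remainder $R_\veps\,\ue^\perp$ in the recast Coriolis term \eqref{Coriolis}, which is the sole obstruction to a purely $\mc A_\veps(0)$-dependent bound.
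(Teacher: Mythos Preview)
Your outline is essentially the paper's own strategy: reformulation via $1/\vrho_\veps=1+\veps a_\veps$, iterative scheme with uniform-in-$\veps$ $H^s$ bounds driven by the elliptic pressure estimate (where the key cancellation is exactly the one you identify, $-\div((1/\vrho_\veps)\nabla\Pi_\veps)=\veps\,\div(\ue\cdot\nabla\ue)-\curl\ue$), convergence in $L^2$ plus interpolation, and uniqueness by an $L^2$ stability/relative-entropy argument. The paper carries the estimates dyadic-block by dyadic-block rather than via $\Lambda^s$, but that is cosmetic.

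Two points in your lifespan sketch are off and would mislead you if you tried to write it out. First, there is no ``Biot--Savart with logarithmic loss'' in the argument: one works directly in $B^1_{\infty,1}$, uses the equivalence $\|\ue\|_{L^2\cap B^1_{\infty,1}}\sim\|\ue\|_{L^2}+\|\omega_\veps\|_{B^0_{\infty,1}}$, and the embedding $B^1_{\infty,1}\hookrightarrow W^{1,\infty}$ with no log. The double logarithm arises because (i) the Hmidi--Keraani estimate gives $\|\omega_\veps\|_{B^0_{\infty,1}}$ growing \emph{linearly} in $\int_0^t\|\nabla\ue\|_{L^\infty}$, hence a Gr\"onwall on $E_\veps:=\|\ue\|_{L^2\cap B^1_{\infty,1}}$ produces one exponential, and (ii) the transported quantity $\|\nabla a_\veps\|_{B^0_{\infty,1}}$ grows like $\exp(C\!\int E_\veps)$, which feeds back through the vorticity source $\nabla a_\veps\wedge\nabla\Pi_\veps$ to give a second exponential; inverting yields $\log\log$. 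Second, the factor $\veps\,\mc A_\veps(0)\,\|\vec u_{0,\veps}\|_{H^s}$ in the maximum does \emph{not} come from the Coriolis remainder $R_\veps\ue^\perp$: it comes from the quadratic piece $\veps\,\|\ue\|_{B^1_{\infty,1}}^2$ in the pressure bound $\|\nabla\Pi_\veps\|_{B^1_{\infty,1}}\lesssim(1+\veps\mc A_\veps^\lambda)(\veps E_\veps^2+E_\veps)$, itself originating in the convective source $\veps\,\div(\ue\cdot\nabla\ue)$ of the pressure equation. Your displayed $H^s$ pressure estimate should likewise carry an $\veps$ in front of the quadratic term.
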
 
Looking at \eqref{improv_life_fullE}, we stress the fact that the only fast rotational effects are not enough to state a global well-posedness result for system \eqref{Euler_eps}: this is coherent with the previous results, as the one in \cite{D-F_JMPA}. 

Now, once we have stated the local in time well-posedness for system \eqref{Euler_eps} in the Sobolev spaces $H^s$, in Section \ref{s:sing-pert} we address the singular perturbation problem describing, in a rigorous way, the limit dynamics depicted by the quasi-homogeneous incompressible Euler system \eqref{system_Q-H_thm} below. 
\begin{theorem}\label{thm:limit_dynamics}
Let $s>2$. For any fixed value of $\veps \in \; ]0,1]$, let initial data $\left(\vrho_{0,\veps},\vec u_{0,\veps}\right)$ verify the hypotheses fixed in Paragraph \ref{ss:FormProb}, and let
$\left( \vre, \ue\right)$ be a corresponding solution to system \eqref{Euler_eps}.
Let $\left(R_0,\vec u_0\right)$ be defined as in \eqref{init_limit_points}.

Then, one has the following convergence properties:
	\begin{align*}
	\varrho_\ep \rightarrow 1 \qquad\qquad \mbox{ in } \qquad &L^\infty\big([0,T^\ast]; L^\infty(\R^2 )\big)\,, \\
	R_\veps:=\frac{\varrho_\ep - 1}{\ep}  \weakstar R \qquad\qquad \mbox{ in }\qquad &L^\infty\bigl([0,T^\ast]; L^\infty(\R^2)\bigr)\,, \\
	\nabla R_\veps  \weakstar \nabla R \qquad\qquad \mbox{ in }\qquad &L^\infty\bigl([0,T^\ast]; H^{s-1}(\R^2)\bigr)\,, \\
 \vec{u}_\ep \weakstar \vec{u}
	\qquad\qquad \mbox{ in }\qquad &L^\infty\big([0,T^\ast];H^s(\R^2)\big)\, .
	\end{align*}	
In addition, $\Big(R\, ,\, \vec{u}  \Big)$ is a solution
to the quasi-homogeneous incompressible Euler system  in $\R_+ \times \R^2$:
\begin{equation}\label{system_Q-H_thm}
\begin{cases}
\d_t R+\div (R\vu)=0 \\
\d_t \vec u+\div \left(\vec{u}\otimes\vec{u}\right)+R\vu^\perp+ \nabla \Pi =0  \\
\div \vec u\,=\,0
\end{cases}
\end{equation}
where $\nabla \Pi$ is a suitable pressure term belonging to $L^\infty\big([0,T^\ast];H^s(\R^2)\big)$. 
\end{theorem}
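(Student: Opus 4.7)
The plan is to use the uniform bounds from Theorem \ref{W-P_fullE} to extract weak-$\ast$ limits on the common interval $[0,T^\ast]$, then to upgrade the weak convergence of the velocity to a strong one via a compensated-compactness argument exploiting the vorticity formulation, and finally to pass to the limit in each nonlinear term to identify \eqref{system_Q-H_thm}. First I would fix the common existence time $T^\ast$ furnished by Theorem \ref{W-P_fullE} and exploit the uniform bounds $(\vu_\veps)_\veps \subset L^\infty_{T^\ast}(H^s)$, $(R_\veps)_\veps \subset L^\infty_{T^\ast}(L^\infty)$, $(\nabla R_\veps)_\veps \subset L^\infty_{T^\ast}(H^{s-1})$ to extract, up to a subsequence, weak-$\ast$ limits $\vu$, $R$, $\nabla R$ in the topologies listed in the statement; the convergence $\vrho_\veps = 1+\veps R_\veps \to 1$ in $L^\infty_{T^\ast}(L^\infty)$ is then immediate. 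The whole difficulty reduces to passing to the limit in the convective term $\div(\vrho_\veps\vu_\veps\otimes\vu_\veps)$ and in the lower-order Coriolis residue $R_\veps\vu_\veps^\perp$, both of which are products of only weakly convergent sequences.

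The decisive step is to reveal the cancellation carried by the vorticity equation. Taking $\curl$ of the momentum equation of \eqref{Euler_eps} annihilates the pressure gradient, and the algebraic identity
\[
\curl(\vrho_\veps\vu_\veps^\perp) \,=\, \div(\vrho_\veps\vu_\veps) \,=\, -\,\d_t\vrho_\veps \,=\, -\,\veps\,\d_t R_\veps
\]
absorbs the singular factor $1/\veps$ exactly. Setting $\gamma_\veps := \curl(\vrho_\veps\vu_\veps)$, one is led to
\[
\d_t\bigl(\gamma_\veps - R_\veps\bigr) \,+\, \curl\div(\vrho_\veps\vu_\veps\otimes\vu_\veps) \,=\, 0,
\]
a transport-type equation \emph{without} any singular term. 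Hence $\d_t(\gamma_\veps - R_\veps)$ is uniformly bounded in $L^\infty_{T^\ast}(H^{s-2})$, while $\gamma_\veps-R_\veps$ itself lies uniformly in $L^\infty_{T^\ast}(H^{s-1})$. An Aubin--Lions argument then provides strong compactness of $(\gamma_\veps-R_\veps)_\veps$ in $C^0_{T^\ast}(H^{s-1-\eta}_{\mathrm{loc}})$ for any $\eta>0$. Writing $\gamma_\veps = \omega_\veps + \veps\,\curl(R_\veps\vu_\veps)$ with $\omega_\veps := \curl\vu_\veps$, the $O(\veps)$ remainder is negligible, so this compactness transfers to $\omega_\veps$; the Biot--Savart relation then yields strong convergence $\vu_\veps \to \vu$ in $L^2_{T^\ast}(L^2_{\mathrm{loc}})$, and by interpolation with the $H^s$ bound in more regular local norms as well.

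With this strong convergence in hand, the continuity equation $\d_t R_\veps + \div(R_\veps\vu_\veps)=0$ passes straightforwardly to $\d_t R + \div(R\vu)=0$. For the momentum equation I would apply the Leray projector $\mathbb{P}$ onto divergence-free fields: in $2$D the constraint $\div\vu_\veps=0$ forces $\vu_\veps^\perp$ to be a gradient, whence $\mathbb{P}(\vu_\veps^\perp)=0$. Thus $\mathbb{P}$ simultaneously annihilates the singular pressure $(1/\veps)\nabla\Pi_\veps$ and the singular Coriolis term $(1/\veps)\vu_\veps^\perp$, leaving
\[
\d_t\,\mathbb{P}(\vrho_\veps\vu_\veps) \,+\, \mathbb{P}\,\div(\vrho_\veps\vu_\veps\otimes\vu_\veps) \,+\, \mathbb{P}(R_\veps\vu_\veps^\perp) \,=\, 0.
\]
Each nonlinear product now couples a strongly convergent factor (either $\vu_\veps$ or $\vrho_\veps \to 1$) with a weakly-$\ast$ convergent one, so the limit passage is legitimate and yields the divergence-free component of \eqref{system_Q-H_thm}; the pressure gradient $\nabla \Pi$ is then reconstructed by applying $\mathrm{Id} - \mathbb{P}$ to $\div(\vu\otimes\vu) + R\vu^\perp$, with the $L^\infty_{T^\ast}(H^s)$ regularity following from standard product estimates in $H^s$.

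The hard part is unambiguously the strong compactness of $\vu_\veps$: the fast Poincar\'e oscillations forbid any direct Aubin--Lions argument applied to $\vu_\veps$ itself, since $\d_t\vu_\veps$ inherits the singular contribution $(1/\veps)\vu_\veps^\perp$. It is precisely the cancellation in the transport equation for $\gamma_\veps-R_\veps$, where the vorticity and the density fluctuation conspire to cancel the singular term, that furnishes the compensated-compactness input needed to bypass this obstruction and close the convergence argument.
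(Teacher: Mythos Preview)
Your strategy is essentially the paper's: both hinge on the observation that the singular Coriolis contribution disappears after taking the $\curl$, so that $\gamma_\veps:=\curl(\vrho_\veps\ue)$ enjoys uniform time-regularity and is therefore compact in $C^0_{T^\ast}(H^{s-2}_{\mathrm{loc}})$. The paper reaches the same conclusion by writing $\vrho_\veps\ue^\perp=\ue^\perp+\veps R_\veps\ue^\perp$ and noting $\curl(\ue^\perp)=\div\ue=0$; your route via $\curl(\vrho_\veps\ue^\perp)=\div(\vrho_\veps\ue)=-\veps\,\d_tR_\veps$ is algebraically equivalent. Two technical points deserve care: first, $R_\veps$ is only in $L^\infty$, not $L^2$, so $\gamma_\veps-R_\veps\notin H^{s-1}(\R^2)$ globally (the Aubin--Lions step must be run in local spaces, where $R_\veps\in H^s_{\mathrm{loc}}$); second, the Biot--Savart operator is nonlocal, so strong \emph{local} convergence of $\omega_\veps$ does not immediately yield strong local convergence of $\ue$.

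The paper sidesteps this second issue entirely: instead of upgrading to strong convergence of $\ue$, it passes to the limit in the convective term via the identity $\div(\ue\otimes\ue)=\tfrac{1}{2}\nabla|\ue|^2+\omega_\veps\ue^\perp$, where the gradient part vanishes against divergence-free test functions and the product $\omega_\veps\ue^\perp$ pairs the strongly convergent $\omega_\veps$ with the weakly convergent $\ue$. This is cleaner than your Biot--Savart step. Finally, your last paragraph overstates the difficulty: once you apply the Leray projector (as you yourself do), the singular terms $(1/\veps)\ue^\perp$ and $(1/\veps)\nabla\Pi_\veps$ both vanish, leaving $\d_t\ue=-\mathbb{P}(\ue\cdot\nabla\ue)-\mathbb{P}(a_\veps\nabla\Pi_\veps)$, which is uniformly bounded in $L^\infty_{T^\ast}(H^{s-1})$ thanks to the uniform $H^s$ bound on $\nabla\Pi_\veps$ from Theorem~\ref{W-P_fullE}. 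So in this high-regularity setting a direct Aubin--Lions argument on $\ue$ \emph{does} work, and would in fact be the simplest route; the compensated-compactness machinery is what one needs in lower-regularity or viscous frameworks where such pressure bounds are unavailable.
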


\begin{remark} 
Due to the fact that the system \eqref{system_Q-H_thm} is well-posed in the previous functional setting (see Theorem \ref{thm:well-posedness_Q-H-Euler} below), we get the convergence of the whole sequence of weak solutions to the solutions of the target equations on the large time interval where the weak solutions to the primitive equations exist.
\end{remark}

At the limit, we have found that the dynamics is prescribed by the quasi-homogeneous incompressible Euler system \eqref{system_Q-H_thm}, for which we state the local well-posedness in $H^s$ (see Section \ref{s:well-posedness_Q-H}). It is worth to remark that the global well-posedness issue for this system is still an open problem. 

\begin{theorem}\label{thm:well-posedness_Q-H-Euler}
Take $s>2$. Let $\big(R_0,u_0 \big)$ be initial data such that $R_0\in L^{\infty}(\R^2)$ and
$\vu_0 \,\in  H^s(\R^2)$, with $\nabla R_0\in H^{s-1}(\R^2)$ and $\div \vu_0\,=\,0$.

Then, there exists a time $T^\ast > 0$ such that, on $[0,T^\ast]\times\R^2$, problem \eqref{system_Q-H_thm} has a unique solution $(R,\vu, \nabla \Pi)$ with the following properties:
\begin{itemize}
 \item $R\in C^0\big([0,T^\ast]\times \R^2\big)$ and $\nabla R\in C^0\big([0,T^\ast];H^{s-1}(\R^2)\big)$;
 \item $\vu$ belongs to $C^0\big([0,T^\ast]; H^{s}(\R^2)\big)$;
 \item the pressure term $\nabla \Pi$ is in $C^0\big([0,T^\ast];H^s(\R^2)\big)$. 
 \end{itemize}

In addition, the lifespan $T^\ast>0$ of the solution $(R, \vu, \nabla \Pi)$ to the $2$-D quasi-homogeneous Euler system \eqref{system_Q-H_thm} enjoys the following lower bound:
\begin{equation}\label{improved_low_bound}
T^\ast\geq \frac{C}{\|\vu_0\|_{H^s}}\log\left(\log \left(C\frac{\|\vu_0\|_{H^s}}{\|R_0\|_{L^\infty}+\|\nabla R_0\|_{H^{s-1}}}+1\right)+1\right)\, ,
\end{equation}
where $C>0$ is a ``universal'' constant, independent of the initial datum.

\end{theorem}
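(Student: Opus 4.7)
The plan for Theorem \ref{thm:well-posedness_Q-H-Euler} is to handle local existence in $H^s$ first by a classical energy method, and then to establish the logarithmic lifespan bound \eqref{improved_low_bound} through an analysis of the vorticity in the critical Besov space $B^{0}_{\infty,1}$.

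For the local well-posedness part, I would work with a Friedrichs-type approximation scheme and derive uniform \emph{a priori} estimates in $H^s$. The continuity equation, being a linear transport with divergence-free velocity, yields $R(t)\in L^\infty$ with the same bound as $R_0$, while standard commutator estimates give
\begin{equation*}
\|\nabla R(t)\|_{H^{s-1}}\,\lesssim\, \|\nabla R_0\|_{H^{s-1}}\,\exp\Bigl(C\int_0^t\|\nabla \vu\|_{L^\infty}\,d\tau\Bigr).
\end{equation*}
For the velocity I would localise the momentum equation in frequency and use the tame product estimate (legitimate since $R\in L^\infty$, $\nabla R\in H^{s-1}$ and $s-1>1$) to control $\|R\vu^\perp\|_{H^s}$. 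The pressure is recovered from the Poisson equation $-\Delta\Pi=\div(\div(\vu\otimes\vu))+\div(R\vu^\perp)$ whose right-hand side lies in $H^{s-1}$, so $\nabla\Pi\in H^s$ by Calder\'on--Zygmund theory. The resulting inequality
\begin{equation*}
\tfrac{d}{dt}\|\vu\|_{H^s}\,\leq\, C\bigl(\|\nabla\vu\|_{L^\infty}+\|R\|_{L^\infty}\bigr)\|\vu\|_{H^s}+C\|\nabla R\|_{H^{s-1}}\|\vu\|_{L^\infty},
\end{equation*}
coupled with the $\nabla R$ bound, closes by Gronwall on a short time interval; uniqueness is obtained by an $L^2\times L^2$ estimate for the difference of two solutions.

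For the improved lifespan, I would apply $\curl$ to the momentum equation and use $\div\vu=0$ together with the identity $\curl(R\vu^\perp)=\div(R\vu)=\vu\cdot\nabla R$ to obtain
\begin{equation*}
\d_t\omega+\vu\cdot\nabla\omega\,=\,-\,\vu\cdot\nabla R,
\end{equation*}
a transport equation with bounded forcing. Setting $V(t):=\int_0^t\|\nabla\vu(\tau)\|_{L^\infty}\,d\tau$ and applying the Vishik--Hmidi--Keraani linear estimate in the critical space $B^{0}_{\infty,1}$ (Theorem \ref{thm:improved_est_transport}) gives
\begin{equation*}
\|\omega(t)\|_{B^{0}_{\infty,1}}\,\leq\, C\Bigl(\|\omega_0\|_{B^{0}_{\infty,1}}+\int_0^t\|\vu\cdot\nabla R\|_{B^{0}_{\infty,1}}\,d\tau\Bigr)\bigl(1+V(t)\bigr),
\end{equation*}
with only a \emph{linear} growth in $V$. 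In parallel, $\nabla R$ solves a transport equation with forcing $\nabla\vu\cdot\nabla R$, so a standard Besov estimate yields the exponential bound $\|\nabla R(t)\|_{B^{0}_{\infty,1}}\leq \|\nabla R_0\|_{B^{0}_{\infty,1}}\,e^{CV(t)}$. A two-dimensional Biot--Savart embedding allows one to control $\|\nabla\vu\|_{L^\infty}$ by $\|\omega\|_{B^{0}_{\infty,1}}$ (up to a low-frequency remainder handled by the $L^2$ energy of $\vu_0$), thereby feeding $V'(t)$ back into the preceding inequalities.

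Combining everything produces a differential inequality for $V$ of the schematic form
\begin{equation*}
V'(t)\,\leq\, C_0\bigl(1+V(t)\bigr)\exp\bigl(C_1V(t)\bigr),
\end{equation*}
with $C_0\sim\|\vu_0\|_{H^s}$ and the small parameter encoded in $\|R_0\|_{L^\infty}+\|\nabla R_0\|_{H^{s-1}}$. A direct integration of such an inequality yields a \emph{double}-logarithmic lower bound on the blow-up time of $V$, which matches exactly the form of \eqref{improved_low_bound}. A Beale--Kato--Majda type continuation criterion (ensuring the $H^s$ solution extends as long as $V<\infty$) finally transfers this lower bound to $T^\ast$. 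The main obstacle I foresee is keeping the product $\|\vu\cdot\nabla R\|_{B^{0}_{\infty,1}}$ absorbable: the potentially exponential growth of $\|\nabla R\|_{B^{0}_{\infty,1}}$ in $V$ must be re-injected into the \emph{linear} Vishik bound without destroying it, and it is precisely this interplay between linear and exponential growths that produces the two nested logarithms in \eqref{improved_low_bound} rather than a single one.
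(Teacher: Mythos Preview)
Your approach is essentially the paper's: local well-posedness by energy methods in $H^s$, then the lifespan bound via the vorticity equation in $B^0_{\infty,1}$ using the linear Vishik--Hmidi--Keraani estimate combined with an exponential transport bound on $R$ (the paper tracks $\|R\|_{B^1_{\infty,1}}$ rather than $\|\nabla R\|_{B^0_{\infty,1}}$, which amounts to the same thing), closed off by a Beale--Kato--Majda continuation criterion. The paper organises the final step with the energy $\mathcal{E}(t):=\|\vu(t)\|_{L^2}+\|\omega(t)\|_{B^0_{\infty,1}}$ and a stopping-time argument rather than a differential inequality for $V$.

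One point needs correction. Your schematic inequality $V'\leq C_0(1+V)\exp(C_1 V)$ does \emph{not} yield a double logarithm: direct integration gives a finite blow-up time of order $1/(C_0C_1)$, or at best a single $\log(1/\delta)$ if the small parameter sits in $C_1$. The structure that actually produces two nested logarithms is that $\delta:=\|R_0\|_{L^\infty}+\|\nabla R_0\|_{H^{s-1}}$ multiplies the exponential term \emph{additively}, as in
\[
\mathcal{E}(t)\,\leq\, C\Bigl(1+\int_0^t\mathcal{E}\Bigr)\Bigl(\mathcal{E}(0)+\delta\int_0^t\mathcal{E}(\tau)\exp\Bigl(\int_0^\tau\mathcal{E}\Bigr)d\tau\Bigr).
\]
The paper then defines $T^\ast$ as the first time the $\delta$-term reaches $\mathcal{E}(0)$; on $[0,T^\ast]$ Gr\"onwall gives $\mathcal{E}(t)\leq C\mathcal{E}(0)e^{C\mathcal{E}(0)t}$, and inserting this back into the stopping condition yields precisely \eqref{improved_low_bound}. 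Your closing sentence shows you understand this interplay, so this is a matter of writing the inequality with the small parameter in the right place rather than a conceptual gap.
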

The proof of the previous ``asymptotically global'' well-posedness result is presented in Subsection \ref{ss:improved_lifespan}. 

\section{Well-posedness for the original problem}\label{s:well-posedness_original_problem}
This section is devoted to the well-posedness issue in the $H^s$ spaces stated in Theorem \ref{W-P_fullE}. We recall that, due to the Littlewood-Paley theory, we have the equivalence between $H^s$ and $B^s_{2,2}$ spaces (see Appendix for details).

We also underline that in this section we keep $\veps \in \; ]0,1] $ fixed. We will take care of explicitly pointing out the dependence to the Rossby number in all the computations in order to get controls that are uniform with respect to the $\veps$-parameter. The choice in keeping explicit the dependence on the rotational parameter is motivated by the fact that we will perform the fast rotation limit (see Section \ref{s:sing-pert} below). 

First of all, since $\vrho_\veps$ is a small perturbation of a constant profile, we set 
\begin{equation}\label{def_a_veps}
 \alpha_\veps :=\frac{1}{\vrho_\veps}-1=\veps a_\veps\quad \text{with}\quad a_\veps:=-R_\veps/\vrho_\veps \, .
\end{equation}
The choice of looking at $\alpha_\veps$ is dictated by the fact that we will extensively exploit the elliptic equation \eqref{elliptic_eq}. 

Now, using the divergence-free condition, we can rewrite the system \eqref{Euler_eps} in the following way (see also Lemma 3 in \cite{D-F_JMPA}): 
\begin{equation}\label{Euler-a_eps_1}
\begin{cases}
\d_t a_{\veps} +\vu_\veps \cdot \nabla a_\veps=0\\
\d_t \vu_{\veps}+ \vu_{\veps} \cdot \nabla \vu_{\veps}+ \frac{1}{\veps} \vu_{\veps}^{\perp}+(1+\veps a_\veps)\frac{1}{\veps}\nabla \Pi_\veps=0\\
\div \vu_{\veps} =0
\end{cases}
\end{equation}
with the initial condition $(a_\veps, \ue)_{|t=0}=(a_{0,\veps},\vu_{0,\veps})$. 

We start by presenting the proof of existence of solutions at the claimed regularity. 
For that scope, we follow a standard procedure: first, we construct a sequence of smooth approximate solutions. Next, we deduce uniform bounds (with respect to the approximation parameter and also to $\veps$) for those regular solutions. Finally, by use of those uniform bounds and an energy method, together with an interpolation argument, we are able to take the limit in the approximation parameter and gather the existence of a solution to the original problem.  

We end this Section \ref{s:well-posedness_original_problem}, proving uniqueness of solutions in the claimed functional setting, by using a relative entropy method.

\subsection{Construction of smooth approximate solutions}
For any $n\in \N$, let us define 
\begin{equation*}
(a_{0,\veps}^n, \vec u_{0,\veps}^n ):= (S_n a_{0,\veps}, S_n \vec u_{0,\veps})\, ,
\end{equation*}
where $S_{n}$ is the low frequency cut-off operator introduced in \eqref{eq:S_j} in the Appendix. We stress also the fact that $a_{0,\veps}\in C^0_{\rm loc}$, since $a_{0,\veps}$ and $\nabla a_{0,\veps}$ are in $L^\infty$.

Then, for any $n\in \N$, we have the density functions $a_{0,\veps}^n\in L^\infty$. Moreover, one has that $\nabla a_{0,\veps}^n$ and $\vec u_{0,\veps}^n$ belong to $H^\infty:=\bigcap_{\sigma \in \R} H^\sigma$ which is embedded (for a suitable topology on $H^\infty$) in the space $C_b^\infty$ of $C^\infty$ functions which are globally bounded together with all their derivatives. 

In addition, by standard properties of mollifiers, one has the following strong convergences 
\begin{equation}\label{conv_in_data}
\begin{split}
a^n_{0,\veps}\rightarrow a_{0,\veps}\quad  &\text{in}\quad C_{\rm loc}^{0} \\
\nabla a^n_{0,\veps}\rightarrow \nabla a_{0,\veps}\quad  &\text{in}\quad  H^{s-1}\\
\vu_{0,\veps}^n\rightarrow \vu_{0,\veps} \quad &\text{in}\quad H^s \, .
\end{split}
\end{equation}

This having been established, we are going to define a sequence of approximate solutions to system \eqref{Euler-a_eps_1} by induction. First of all, we set $(a_\veps^0,\vu_\veps^0, \nabla \Pi_\veps^0)=(a^0_{0,\veps},\vu^0_{0,\veps},0)$.  Then, for all $\sigma \in \R$, we have that $\nabla a_{\veps}^0 ,\vec u_{\veps}^0 \in H^\sigma$ and $a^0_\veps \in L^\infty$ with $\div \vec u_{\veps}^0=0$. Next, assume that the couple $(a_\veps^n, \ue^n)$ is given such that, for all $\sigma \in \R$,
\begin{equation*}
 a^n_\veps \in C^0(\R_+;L^\infty) \quad \nabla a_{\veps}^n ,\vec u_{\veps}^n \in C^0(\R_+; H^\sigma)\quad  \quad \text{and}\quad \div \vec u_{\veps}^n=0\, .
\end{equation*}
First of all, we define $a_\veps^{n+1}$ as the unique solution to the linear transport equation
\begin{equation}\label{mass_eq}
\d_t a_{\veps}^{n+1} +\vu_\veps^n \cdot \nabla a_\veps^{n+1}=0 \quad \text{with}\quad {(\an)}_{|t=0}=a_{0,\veps}^{n+1}\, .
\end{equation}
Since, by inductive hypothesis and embeddings, $\ue^n$ is divergence-free, smooth and uniformly bounded with all its derivatives, we can deduce that $\an \in L^\infty(\R_+; L^\infty )$. Moreover, from

\begin{equation*}
\d_t\, \d_i \an +\ue^{n}\cdot \nabla\, \d_i \an =-\d_i \ue^n \cdot \nabla \an \quad \text{with}\quad {(\d_i\an)}_{|t=0}=\d_i a_{0,\veps}^{n+1} \quad \text{for}\; i=1,2
\end{equation*} 
and thanks to the Theorem \ref{thm_transport}, we can propagate all the $H^{\sigma}$ norms of the initial datum. We deduce that $a_\veps^{n+1}\in C^0(\R_+;L^\infty)$ and $\nabla a_\veps^{n+1}\in C^0(\R_+;H^{\sigma})$ for any $\sigma \in \R$. 
Next, we consider the approximate linear iteration
\begin{equation}\label{approx_iteration_1}
\begin{cases}
\d_t \vu_{\veps}^{n+1}+ \vu_{\veps}^n \cdot \nabla \vu_{\veps}^{n+1}+ \frac{1}{\veps} \vu_{\veps}^{\perp, n+1}+(1+\veps a_\veps^{n+1})\frac{1}{\veps}\nabla \Pi_\veps^{n+1}=0\\
\div \vu_{\veps}^{n+1} =0\\
(\vu_\veps^{n+1})_{|t=0}=\vu_{0,\veps}^{n+1}\, .
\end{cases}
\end{equation}
At this point, one can solve the previous linear problem finding a unique solution $\vu_\veps^{n+1}\in C^0(\R_+; H^\sigma )$ for any $\sigma \in \R$ and the pressure term $\nabla \Pi_\veps^{n+1}$ can be uniquely determined (we refer to \cite{D_AT} for details in this respect).

\subsection{Uniform estimates for the approximate solutions}\label{ss:unif_est}

We now have to show (by induction) uniform bounds for the sequence $(a_\veps^n, \ue^n, \nabla \Pi_\veps^n)_{n\in \N}$ we have constructed above.

We start by finding uniform estimates for $a_\veps^{n+1}$. Thanks to equation \eqref{mass_eq} and the divergence-free condition on $\ue^n$, we can propagate the $L^\infty$ norm for any $t\geq 0$:
\begin{equation}\label{eq:transport_density}
\|a_\veps^{n+1}(t)\|_{L^\infty} \leq\|a_{0,\veps}^{n+1}\|_{L^\infty}\leq C \|a_{0,\veps}\|_{L^\infty}\, .
\end{equation} 
At this point we want to estimate $\nabla a_\veps^{n+1}$ in $H^{s-1}$. We have for $i=1,2$:
\begin{equation*}
\d_t\, \d_i \an +\ue^{n}\cdot \nabla\, \d_i \an =-\d_i \ue^n \cdot \nabla \an \, .
\end{equation*}
Taking the non-homogeneous dyadic blocks $ \Delta_j$, we obtain 
\begin{equation*}
\d_t \Delta_j \, \d_i a_\veps^{n+1}+\ue^n\cdot \nabla  \Delta_j\, \d_i \an=[\ue^n\cdot \nabla,\Delta_j]\, \d_i \an -\Delta_j\left(\d_i \ue^n \cdot \nabla \an \right)\, .
\end{equation*}
Multiplying by $ \Delta_j\, \d_i \an$, we have
\begin{equation*}
\| \Delta_j\, \d_i \an(t)\|_{L^2}\leq  \| \Delta_j\, \d_i a_{0,\veps}^{n+1}\|_{L^2}+C \int_0^t\left(\left\|[\ue^n\cdot \nabla, \Delta_j] \, \d_i \an\right\|_{L^2}+\|\Delta_j\left(\d_i \ue^n \cdot \nabla \an \right) \|_{L^2} \right)\, \detau  \, .
\end{equation*}
We apply now the second commutator estimate stated in Lemma \ref{l:commutator_est} to the former term in the integral on the right-hand side, getting
\begin{equation*}
2^{j(s-1)}\left\|[\ue^n\cdot \nabla, \Delta_j]\, \d_i \an\right\|_{L^2}\leq C\, c_j(t)\left(\|\nabla \ue^n\|_{L^\infty}\|\d_i \an\|_{ H^{s-1}}+\|\nabla \ue^n\|_{H^{s-1}}\|\d_i \an \|_{L^\infty}\right)
\end{equation*}
where $(c_j(t))_{j\geq -1}$ is a sequence in the unit ball of $\ell^2$.

Instead, the latter term can be bounded in the following way:
\begin{equation*}\label{eq:nabla-u_nabla-a}
2^{j(s-1)}\|\Delta_j \left(\d_i \ue^n \cdot \nabla \an \right)\|_{L^2}\leq C\, c_j(t)\,  \|\nabla \ue^n\|_{H^{s-1}}\|\nabla \an \|_{H^{s-1}}\, .
\end{equation*}

Then, due to the embedding $H^\sigma(\R^2)\hookrightarrow L^\infty (\R^2)$ for $\sigma>1$,
\begin{equation*}
2^{j(s-1)}\| \Delta_j\, \nabla \an(t)\|_{L^2}\leq 2^{j(s-1)}\| \Delta_j \nabla a_{0,\veps}^{n+1}\|_{L^2}+\int_0^t C\, c_j(\tau )\left(\| \ue^n\|_{ H^s}\|\nabla \an\|_{ H^{s-1}}\right)\, \detau\, . 
\end{equation*}
At this point, after summing on indices $j\geq -1$, thanks to the Minkowski inequality (for which we refer to Proposition 1.3 of \cite{B-C-D}) combined with a Gr\"onwall type argument, we finally obtain 
\begin{equation}\label{est:a^(n+1)}
\sup_{0\leq t\leq T}\|\nabla \an (t)\|_{H^{s-1}}\leq \|\nabla a_{0,\veps}^{n+1}\|_{H^{s-1}}\, \exp \left(\int_0^T C \, \|\ue^n(t)\|_{ H^s}\, \dt\right)\, .
\end{equation}

Now, we have to estimate the velocity field $\ue^{n+1}$ and for that purpose we start with the $L^2$ estimate. We take the momentum equation in the original form:
\begin{equation}\label{mom_eq_original_prob}
\vrho_\veps^{n+1}\left(\d_t\ue^{n+1}+\ue^n\cdot \nabla \ue^{n+1}\right)+\frac{1}{\veps}\vrho_\veps^{n+1}\ue^{\perp, n+1}+\frac{1}{\veps}\nabla \Pi_\veps^{n+1}=0 \, ,
\end{equation}
where we construct $\vrho_\veps^{n+1}:=1/(1+\veps \an)$ starting from $\an$. Notice that $\vrho_\veps^{n+1}$ satisfies the transport equation
\begin{equation*}
\d_t \vrho_\veps^{n+1}+\ue^{n}\cdot \nabla \vrho_\veps^{n+1}=0\, .
\end{equation*}  

At this point, we test equation \eqref{mom_eq_original_prob} against $\ue^{n+1}$. We integrate by parts on $\R^2$, deriving the following estimate: 
$$ \int_{\R^2} \vrho_\veps^{n+1}\d_t|\ue^{n+1}|^2+\int_{\R^2}\vrho_\veps^{n+1}\ue^n\cdot \nabla |\ue^{n+1}|^2=0 \, ,$$
which implies (making use of the transport equation for $\vrho_\veps^{n+1}$)
\begin{equation*}
 \left\|\sqrt{\vrho_\veps^{n+1}(t)}\,  \ue^{n+1}(t)\right\|_{L^2}\leq \left\|\sqrt{\vrho_{0,\veps}^{n+1}} \, \vec u_{0,\veps}^{n+1}\right\|_{L^2}\, .
\end{equation*}
From the previous bound, due to the assumption \eqref{assumption_densities}, we  can deduce the preservation of the $L^2$ norm for the velocity field $\ue^{n+1}$:
\begin{equation*}
 \left\| \ue^{n+1}(t)\right\|_{L^2}\leq C\left\| \vec u_{0,\veps}^{n+1}\right\|_{L^2}\leq C\left\| \vec u_{0,\veps}\right\|_{L^2}\, .
\end{equation*}

Taking now the operator $ \Delta_j$ in the momentum equation in \eqref{approx_iteration_1}, we obtain
\begin{equation*}
\d_t \Delta_j \ue^{n+1}+\ue^n\cdot \nabla \Delta_j \ue^{n+1}=[\ue^n\cdot \nabla, \Delta_j]\ue^{n+1}-\frac{1}{\veps}\Delta_j\ue^{\perp ,n+1}-\Delta_j\left[\left(1+\veps a_\veps^{n+1}\right)\frac{1}{\veps}\nabla \Pi_\veps^{n+1}\right]
\end{equation*}
and multiplying again by $\Delta_j\ue^{n+1}$, we have cancellations so that 
\begin{equation}\label{eq:vel_est_dyadic}
\left\|\Delta_j\ue^{n+1}(t)\right\|_{L^2}\leq \left\|\Delta_j\vec u_{0,\veps}^{n+1}\right\|_{L^2}+C\int_0^t \left(\left\|[\ue^n\cdot \nabla, \Delta_j]\ue^{n+1}\right\|_{L^2}+\left\|\Delta_j\left( a_\veps^{n+1}\nabla \Pi_\veps^{n+1}\right)\right\|_{L^2}\right) \, \detau\, .
\end{equation}
As done before, we employ here the commutator estimates of Lemma \ref{l:commutator_est} in order to have 
\begin{equation*}
\begin{split}
2^{js} \left\|[\ue^n\cdot \nabla,  \Delta_j]\ue^{n+1}\right\|_{L^2}&\leq C\,c_j\, \left(\|\nabla \ue^n\|_{L^\infty}\|\ue^{n+1}\|_{ H^s}+\|\nabla \ue^{n+1}\|_{L^\infty}\|\ue^n\|_{ H^s}\right)\\
&\leq C\,c_j\, \left(\| \ue^n\|_{ H^s}\|\ue^{n+1}\|_{ H^s}\right)\, .
\end{split}
\end{equation*}
For the latter term on the right-hand side of \eqref{eq:vel_est_dyadic}, we take advantage of the Bony decomposition (see Paragraph \ref{app_paradiff}) and apply Proposition \ref{prop:app_fine_tame_est}. We may infer that 
\begin{equation*}
\begin{split}
 \left\|a_\veps^{n+1}\nabla \Pi_\veps^{n+1}\right\|_{ H^s}\leq  C\left(\|\an \|_{L^\infty}+\|\nabla a_\veps^{n+1}\|_{ H^{s-1}}\right)\|\nabla \Pi_\veps^{n+1}\|_{ H^s}\, .
\end{split}
\end{equation*}
To finish with, we have to find a uniform bound for the pressure term. For that, we apply the $\div$ operator in \eqref{approx_iteration_1}. Thus, we aim at solving the elliptic problem 
\begin{equation}\label{eq:elliptic_problem}
-\div \left(\left(1+\veps a_\veps^{n+1}\right)\nabla \Pi_\veps^{n+1}\right)=\, \veps\, \div (\ue^n \cdot \nabla \ue^{n+1} )- \curl \ue^{n+1}\, .
\end{equation}
Thanks to the assumption \eqref{assumption_densities} and Lemma 2 of \cite{D_JDE}, we can obtain 
\begin{equation}\label{est:Pi_L^2}
\begin{split}
\|\nabla \Pi_\veps^{n+1}\|_{L^2}&\leq C\left(\veps \, \|\ue^n\cdot \nabla \ue^{n+1}\|_{L^2}+\|\ue^{\perp,n+1}\|_{L^2}\right)\\
&\leq  C\left(\veps \, \|\ue^n\|_{L^2} \| \ue^{n+1}\|_{H^s}+\|\ue^{n+1}\|_{L^2}\right)\, .
\end{split}
\end{equation} 
Now, we apply the spectral cut-off operator $ \Delta_j$ to \eqref{eq:elliptic_problem}. We get 
\begin{equation*}
-\, \div \left( A_\veps \Delta_j\nabla \Pi_\veps^{n+1}\right)=\div \left(\left[ \Delta_j,A_\veps\right]\nabla \Pi_\veps^{n+1}\right)+\, \div  \Delta_j F_\veps
\end{equation*}
for all $j\geq 0$ and where we have defined $A_\veps:=\left(1+\veps a_\veps^{n+1}\right)$ and $F_\veps:=\veps \ue^n \cdot \nabla \ue^{n+1}+\ue^{\perp,n+1}$.
Hence multiplying both sides by $ \Delta_j  \Pi_\veps^{n+1}$ and integrating over $\R^2$, we have 
\begin{equation*}
-\int_{\R^2}\Delta_j\Pi_\veps^{n+1} \div \left( A_\veps \Delta_j\nabla \Pi_\veps^{n+1}\right) \, \dx= \int_{\R^2}\Delta_j\Pi_\veps^{n+1} \div \left( \left[\Delta_j, A_\veps\right] \nabla \Pi_\veps^{n+1}\right) \, \dx+\int_{\R^2}\Delta_j\Pi_\veps^{n+1} \div \Delta_j F_\veps \, \dx.
\end{equation*}
Since for $j\geq 0$ we have $\|\Delta_j \nabla \Pi_\veps^{n+1}\|_{L^2}\sim 2^j\|\Delta_j  \Pi_\veps^{n+1}\|_{L^2}$ (according to Lemma \ref{l:bern}) and using H\"older's inequality for the right-hand side, we obtain for all $j\geq 0$: 
\begin{equation*}
2^{j}\| \Delta_j \nabla \Pi_\veps^{n+1}\|^2_{L^2}\leq C\| \Delta_j \nabla \Pi_\veps^{n+1}\|_{L^2}\left( \| \div \left[ \Delta_j,A_\veps \right]\nabla \Pi_\veps^{n+1}\|_{L^2}+\|\div \Delta_j F_\veps\|_{L^2} \right)\, .
\end{equation*}
To deal with the former term on the right-hand side, we take advantage of the following commutator estimate (see Lemma \ref{l:commutator_pressure} in the Appendix):
\begin{equation*}
\| \div \left[ \Delta_j,A_\veps \right]\nabla \Pi_\veps^{n+1}\|_{L^2}\leq C\, c_j \, 2^{-j(s-1)}\|\nabla A_\veps\|_{H^{s-1}}\|\nabla \Pi_\veps^{n+1}\|_{H^{s-1}}
\end{equation*}
for a suitable sequence $(c_j)_{j}$ belonging to the unit sphere of $\ell^2$.

After multiplying by $2^{j(s-1)}$, we get
 \begin{equation*}
2^{js}\| \Delta_j \nabla \Pi_\veps^{n+1}\|_{L^2}\leq C\left(c_j\,  \|\nabla A_\veps\|_{H^{s-1}}\|\nabla \Pi_\veps^{n+1}\|_{H^{s-1}}+2^{j(s-1)}\|\div \Delta_j F_\veps\|_{L^2} \right)\, .
\end{equation*}
Taking the $\ell^2$ norm of both sides and summing up the low frequency blocks related to $\Delta_{-1}\nabla \Pi_\veps^{n+1}$, we may have 
\begin{equation*}
\|\nabla \Pi_\veps^{n+1}\|_{H^s}\leq C\left(  \|\nabla A_\veps\|_{H^{s-1}}\|\nabla \Pi_\veps^{n+1}\|_{H^{s-1}}+\|\div  F_\veps\|_{H^{s-1}}+\|\Delta_{-1}\nabla \Pi_\veps^{n+1}\|_{L^2} \right)\, .
\end{equation*}
We observe that $\|\Delta_{-1}\nabla \Pi_\veps^{n+1}\|_{L^2}\leq C\|\nabla \Pi_\veps^{n+1}\|_{L^2}$ and 
\begin{equation*}
\|\nabla \Pi_\veps^{n+1}\|_{H^{s-1}}\leq C\|\nabla \Pi_\veps^{n+1}\|_{L^2}^{1/s}\|\nabla \Pi_\veps^{n+1}\|_{H^s}^{1-1/s}\, .
\end{equation*}
Therefore,
\begin{equation*}
\|\nabla \Pi_\veps^{n+1}\|_{H^s}\leq C\left(  \|\nabla A_\veps\|_{H^{s-1}}\|\nabla \Pi_\veps^{n+1}\|_{L^2}^{1/s}\|\nabla \Pi_\veps^{n+1}\|_{H^{s}}^{1-1/s}+\|\div  F_\veps\|_{H^{s-1}}+\|\nabla \Pi_\veps^{n+1}\|_{L^2} \right)\, .
\end{equation*}
Then applying Young's inequality we finally infer that 
\begin{equation}\label{est_Pi_H^s_1}
\|\nabla \Pi_\veps^{n+1}\|_{H^s}\leq C\left(  \left(1+\|\nabla A_\veps\|_{H^{s-1}}\right)^s \|\nabla \Pi_\veps^{n+1}\|_{L^2}+\|\div  F_\veps\|_{H^{s-1}} \right)\, .
\end{equation}
It remains to analyse the term $\div F_\veps$ where $F_\veps:=\veps \ue^n \cdot \nabla \ue^{n+1}+\ue^{\perp,n+1}$. Due to the divergence-free conditions, we can write 
\begin{equation*}
\div (\ue^n \cdot \nabla \ue^{n+1} )=\nabla \ue^{n}:\nabla \ue^{n+1}
\end{equation*}
and as $H^{s-1}$ is an algebra, the term $\div (\ue^n \cdot \nabla \ue^{n+1} )$ is in $H^{s-1}$, with 
\begin{equation}\label{est:div_u}
\|\div (\ue^n \cdot \nabla \ue^{n+1} )\|_{H^{s-1}}\leq C \|\ue^n\|_{H^s}\|\ue^{n+1}\|_{H^s}\, .
\end{equation}
Putting \eqref{est:Pi_L^2} and \eqref{est:div_u} into \eqref{est_Pi_H^s_1}, we find that 
\begin{equation}\label{est_Pi_final}
\begin{split}
\|\nabla \Pi_\veps^{n+1}\|_{ H^{s}}&\leq C  \left(1+\veps \|\nabla a_\veps^{n+1}\|_{ H^{s-1}}\right)^s \left(\veps \|\ue^n\|_{L^2}\|\ue^{n+1}\|_{ H^s}+\|\ue^{\perp,n+1}\|_{L^2}\right)\\
&+C\, \left(\veps \|\ue^{n}\|_{ H^{s}}\|\ue^{n+1}\|_{ H^{s}} +\|\ue^{\perp,n+1}\|_{ H^{s}}\right)\\
&\leq C  \left(1+\veps \|\nabla a_\veps^{n+1}\|_{ H^{s-1}}\right)^s \left(\veps \|\ue^n\|_{H^s}+1\right)\|\ue^{n+1}\|_{ H^s}\, ,
\end{split}
\end{equation} 
which implies the $L^\infty_T(H^s)$ estimate for the pressure term:
\begin{equation}\label{est:Pi^(n+1)}
\|\nabla \Pi_\veps^{n+1}\|_{L^{\infty}_T H^{s}}\leq C \left(1+\veps \|\nabla a_\veps^{n+1}\|_{L^\infty_T  H^{s-1}}\right)^s\left(\veps \|\ue^n\|_{L^\infty_T H^s}+1\right)\|\ue^{n+1}\|_{L^\infty_TH^s}\, .
\end{equation}
Combining all the previous estimates together with a Gr\"onwall type inequality, we finally obtain an estimate for the velocity field:
\begin{equation}\label{est:u^(n+1)}
\sup_{0\leq t\leq T}\|\ue^{n+1}(t)\|_{H^s}\leq \|\vec u_{0,\veps}^{n+1}\|_{H^s}\exp \left(\int_0^T A_n(t)\, \dt\right)
\end{equation}
where 
\begin{equation*}
\begin{split}
A_n(t)=C\left(\|\an (t)\|_{L^\infty}+\|\nabla a_\veps^{n+1}(t)\|_{ H^{s-1}}\right)\left(1+\veps \|\nabla a_\veps^{n+1}(t)\|_{ H^{s-1}}\right)^s\left(\veps \|\ue^n(t)\|_{H^{s}}+1\right)+C\|\ue^n(t)\|_{ H^s}\, .
\end{split}
\end{equation*}

We point out that the above constants $C$ do not depend on $n$ nor on $\veps$.

The scope in what follows is to obtain uniform estimates by induction. Thanks to the assumptions stated in Paragraph \ref{ss:FormProb}, we can suppose that the initial data satisfy 
\begin{equation*}
\|a_{0,\veps}\|_{L^\infty}\leq \frac{C_0}{2}\, ,\quad \quad \|\nabla a_{0,\veps}\|_{H^{s-1}}\leq \frac{C_1}{2} \quad \quad \text{and}\quad \quad \|\vec{u}_{0,\veps}\|_{H^s}\leq \frac{C_2}{2} \, ,
\end{equation*}
for some $C_0, C_1, C_2>0$. Due to the relation \eqref{eq:transport_density} we immediately infer that, for all $n\geq 0$, 
\begin{equation*}
\|\an \|_{L^\infty_t L^\infty}\leq C\|a_{0,\veps}\|_{L^\infty}\leq C\, C_0 \quad \text{for all }t\in \R_+.
\end{equation*}
At this point, the aim is to show (by induction) that the following uniform bounds hold  for all $n\geq 0$:
\begin{equation}\label{eq:unifbounds}
\begin{split}
&\|\nabla a_\veps^{n+1}\|_{L^\infty_{T^\ast}H^{s-1}}\leq C_1\\
&\|\ue^{n+1}\|_{L^\infty_{T^\ast}H^s}\leq C_2\\
&\|\nabla \Pi_\veps^{n+1}\|_{L^\infty_{T^\ast}H^s}\leq C_3\, ,
\end{split}
\end{equation}
provided that $T^\ast$ is sufficiently small.

The previous estimates in \eqref{eq:unifbounds} obviously hold for $n=0$. At this point, we will prove them for $n+1$, supposing that the controls in \eqref{eq:unifbounds} are true for $n$.
From \eqref{est:a^(n+1)}, \eqref{est:u^(n+1)} and \eqref{est:Pi^(n+1)} we obtain 
\begin{align*}
&\|\nabla a_\veps^{n+1}\|_{L^\infty_{T}H^{s-1}}\leq \frac{C_1}{2}\exp \Big(CTC_2\Big)\\
&\|\ue^{n+1}\|_{L^\infty_{T}H^s}\leq \frac{C_2}{2}\exp \Big(CT(C_0+C_1)\left(1+\veps C_1\right)^s(\veps C_2+1)C_2 \Big)\\
&\|\nabla \Pi_\veps^{n+1}\|_{L^\infty_{T}H^s}\leq C(\veps C_2+1)\left(1+\veps C_1\right)^s\|\ue^{n+1}\|_{L^\infty_{T}H^s}\, .
\end{align*}
So we can choose $T^{\ast}$ such that $\exp \Big(\max\{C_0+C_1,\, 1\}\, CT\left(1+ C_1\right)^s(1+ C_2)\, C_2 \Big)\leq 2$. Notice that $T^\ast$ does not depend on $\veps$. 
 
Thus, by induction, \eqref{eq:unifbounds} holds for the step $n+1$, and therefore it is true for any $n\in \N$. 

\subsection{Convergence}\label{ss:conv_H^s}
To prove the convergence, we estimate the difference between two iterations. First of all, let us define 
\begin{equation*}
\widetilde{a}_\veps^{n}:=a_\veps^n -a^n_{0,\veps}
\end{equation*}
that satisfies the transport equation
\begin{equation*}
\begin{cases}
\d_t \widetilde{a}^n_\veps+\ue^{n-1}\cdot \nabla \widetilde{a}_\veps^n=-\ue^{n-1}\cdot \nabla a_{0,\veps}^n\\
\widetilde{a}{_{\veps}^n}_{|t=0}=0\, .
\end{cases}
\end{equation*}
Hence, since the right-hand side is definitely uniformly bounded (with respect to $n$) in $L^1_{\rm loc}(\R_+;L^2)$, from classical results for transport equations we get that $(\widetilde{a}^n_\veps)_{n\in \N}$ is uniformly bounded in $C^0([0,T];L^2)$. Now, we want to prove that the sequence $(\widetilde{a}^n_\veps, \ue^n, \nabla \Pi^n_\veps)_{n\in \N}$ is a Cauchy sequence in $C^0([0,T];L^2)$. 
So, let us define, for $(n,l)\in \N^2$, the following quantities,
\begin{align*}
&\delta {a}_\veps^{n,l}:=\anl-a_\veps^n\\
&\delta\widetilde{a}_\veps^{n,l} :=\widetilde{a}_\veps^{n+l}-\widetilde{a}_\veps^{n}=\delta a_\veps^{n,l}-\delta a_{0,\veps}^{n,l}\, , \quad \text{ where }\quad \delta {a}_{0,\veps}^{n,l}:=a_{0,\veps}^{n+l}-a_{0,\veps}^n\\
&\delta \ue^{n,l}:=\ue^{n+l}-\ue^n\\
&\delta \Pi_\veps^{n,l}:=\Pi_\veps^{n+l}-\Pi_\veps^n\, ,
\end{align*}
that solve the following system
\begin{equation}\label{syst_convergence_analysis}
\begin{cases}
\d_t \delta \widetilde{a}_{\veps}^{n,l} +\vu_\veps^{n+l-1} \cdot \nabla \delta \widetilde{a}_\veps^{n,l}=-\delta\ue^{n-1,l}\cdot \nabla a_\veps^n-\ue^{n+l-1}\cdot \nabla \delta a^{n,l}_{0,\veps}\\
\d_t\delta \vu_{\veps}^{n,l}+ \vu_{\veps}^{n+l-1} \cdot \nabla \delta\vu_{\veps}^{n,l}=-\delta \ue^{n-1,l} \cdot \nabla \ue^n - \frac{1}{\veps} (\delta \vu_{\veps}^{n,l})^\perp-(1+\veps a_\veps^{n+l})\frac{1}{\veps}\nabla \delta \Pi_\veps^{n,l}-\delta a_\veps^{n,l} \nabla \Pi_\veps^n \\
\div  \delta \vu_{\veps}^{n,l} =0\\
(\delta \widetilde{a}_\veps^{n,l},\delta \vu_\veps^{n,l})_{|t=0}=(0,\delta \vu_{0,\veps}^{n,l})\, .
\end{cases}
\end{equation}

We perform an energy estimate for the first equation in \eqref{syst_convergence_analysis}, getting
\begin{equation*}
\|\delta \widetilde{a}_\veps^{n,l}(t)\|_{L^2}\leq C\int_0^t\left(\|\nabla a_\veps^n\|_{L^\infty}\|\delta \vu_\veps^{n-1,l}\|_{L^2}+\|\ue^{n+l-1}\|_{L^2}\|\nabla \delta a_{0,\veps}^{n,l}\|_{L^\infty}\right)\, \detau\, .
\end{equation*} 
Moreover, from the momentum equation multiplied by $\delta \ue^{n,l}$, integrating by parts over $\R^2$, we obtain 
\begin{equation*}
\begin{split}
\int_{\R^2}\frac{1}{2}\d_t\, |\delta \ue^{n,l}|^2=-\int_{\R^2}(\delta \ue^{n-1,l}\cdot \nabla \ue^{n})\cdot\delta \ue^{n,l}+\int_{\R^2}(a_\veps^{n+l}\, \nabla \delta \Pi_\veps^{n,l})\cdot \delta \ue^{n}
+\int_{\R^2}(\delta a_\veps^{n,l}\, \nabla  \Pi_\veps^{n})\cdot \delta \ue^{n,l}\, ,
\end{split}
\end{equation*} 
which implies 
\begin{equation*}
\begin{split}
\|\delta \ue^{n,l}(t)\|_{L^2}&\leq C \|\delta \vec u_{0,\veps}^{n,l}\|_{L^2}+C\int_0^t\|\nabla \ue^{n}\|_{L^\infty}\|\delta \ue^{n-1,l}\|_{L^2}+\|a_\veps^{n+l}\|_{L^\infty}\|\nabla \delta \Pi_\veps^{n,l}\|_{L^2}\, \detau\\
&+C\int_0^t\left(\|\delta \widetilde{a}_\veps^{n,l}(\tau )\|_{L^2}+\|\delta a_{0,\veps}^{n,l}\|_{L^\infty}\right)\|\nabla \Pi_\veps^{n}(\tau)\|_{L^2 \cap L^\infty} \, \detau\, ,
\end{split}
\end{equation*} 
where we have also employed the fact that $\delta a_\veps^{n,l} =\delta \widetilde{a}_\veps^{n,l}+\delta a_{0,\veps}^{n,l}$. 

Finally, for the pressure term we take the $\div$ operator in the momentum equation of system \eqref{syst_convergence_analysis}, obtaining 
\begin{equation*}
-\div \left((1+\veps a_\veps^{n+l})\frac{1}{\veps}\nabla \delta \Pi_\veps^{n,l}\right)=\div \left(-\delta \vu_{\veps}^{n,l} \cdot \nabla \vu_{\veps}^{n+l-1} +\delta \ue^{n-1,l} \cdot \nabla \ue^n + \frac{1}{\veps} (\delta \vu_{\veps}^{n,l})^\perp+\delta a_\veps^{n,l} \nabla \Pi_\veps^n \right),
\end{equation*} 
so that we have
\begin{equation}\label{Pi_cauchy}
\begin{split}
\|\nabla \delta \Pi_\veps^{n,l}\|_{L^2}&\leq C\veps \left(\|\delta
\ue^{n-1,l}\|_{L^2}\|\nabla \ue^n\|_{L^\infty}+\|\delta
a_\veps^{n,l}\, \nabla \Pi_\veps^n\|_{L^2}\right)\\
&+C\veps\|\delta
\ue^{n,l}\|_{L^2}\|\nabla \ue^{n+l-1}\|_{L^\infty}+C\|(\delta \ue^{n,l})^\perp\|_{L^2}\\
&\leq C\veps \left(\|\delta
\ue^{n-1,l}\|_{L^2}\|\nabla \ue^n\|_{L^\infty}+\|\delta
\widetilde{a}_\veps^{n,l}\|_{L^2}\|\nabla \Pi_\veps^n\|_{L^\infty}+\|\delta
a_{0,\veps}^{n,l}\|_{L^\infty}\|\nabla \Pi_\veps^n\|_{L^2}\right)\\
&+C\veps\|\delta
\ue^{n,l}\|_{L^2}\|\nabla \ue^{n+l-1}\|_{L^\infty} +C\|\delta \ue^{n,l}\|_{L^2}\, .
\end{split}
\end{equation}
At this point, applying Gr\"onwall lemma and using the bounds established in Paragraph \ref{ss:unif_est}, we thus argue that for $t\in [0,T^\ast]$:
\begin{equation*}
\begin{split}
\|\delta \widetilde{a}_\veps^{n,l}(t)\|_{L^2}+\|\delta \ue^{n,l}(t)\|_{L^2}&\leq C_{T^\ast}\left(\|\nabla \delta a_{0,\veps}^{n,l}\|_{L^\infty}+\| \delta a_{0,\veps}^{n,l}\|_{L^\infty}+\|\delta \vec u_{0,\veps}^{n,l}\|_{L^2}\right)\\
&+C_{T^\ast}\int_0^t\left(\|\delta \widetilde{a}_\veps^{n-1,l}(\tau)\|_{L^2}+\|\delta \ue^{n-1,l}(\tau)\|_{L^2}\right) \detau\, ,
\end{split}
\end{equation*}
where the constant $C_{T^\ast}$ depends on $T^\ast$ and on the initial data. 

After setting
\begin{equation*}
F_0^n:=\sup_{l\geq 0}\left(\|\nabla \delta a_{0,\veps}^{n,l}\|_{L^\infty}+\| \delta a_{0,\veps}^{n,l}\|_{L^\infty}+\|\delta \vec u_{0,\veps}^{n,l}\|_{L^2}\right)\quad \text{and}\quad G^n(t):=\sup_{l\geq 0}\sup_{[0,t]}\left(\|\delta \widetilde{a}_\veps^{n,l}\|_{L^2}+\|\delta \ue^{n,l}\|_{L^2}\right), 
\end{equation*}
by induction we may conclude that, for all $t\in [0,T^\ast]$, 
\begin{equation*}
\begin{split}
G^n(t)\leq C_{T^\ast}\sum_{k=0}^{n-1}\frac{(C_{T^\ast}T^\ast)^k}{k!}F_0^{n-k}+\frac{\left(C_{T^\ast}T^\ast\right)^n}{n!}G^0(t)\, .
\end{split}
\end{equation*}
Now, bearing \eqref{conv_in_data} in mind, we have 
\begin{equation*}
\lim_{n\rightarrow +\infty}F_0^n=0\, .
\end{equation*}
Hence, we may infer that
\begin{equation}\label{est_fin_conv}
\lim_{n\rightarrow +\infty}\sup_{l\geq 0}\sup_{t\in [0,T^\ast]}\left(\|\delta \widetilde{a}_\veps^{n,l}(t)\|_{L^2}+\|\delta \ue^{n,l}(t)\|_{L^2}\right)=0\, .
\end{equation}

Property \eqref{est_fin_conv} implies that both $(\widetilde{a}_\veps^n)_{n\in \N}$ and $(\ue^n)_{n\in \N}$ are Cauchy sequences in $C^0([0,T^\ast];L^2)$: therefore, such sequences converge to some functions $\widetilde{a}_\veps$ and $\ue$ in the same space. Taking advantage of previous computations in \eqref{Pi_cauchy}, we have also that $(\nabla \Pi_\veps^n)_{n\in \N}$ converge to a function $\nabla \Pi_\veps$ in $C^0([0,T^\ast];L^2)$.

Now, we define $a_\veps:= a_{0,\veps} +\widetilde{a}_\veps$. Hence, $ a_\veps -a_{0,\veps}$ is in $C^0([0,T^\ast];L^2)$.
Moreover, as $(\nabla a_\veps^n)_{n\in \N}$ is uniformly bounded in $L^\infty ([0,T^\ast];H^{s-1})$ and Sobolev spaces have the \textit{Fatou property}, we deduce that $\nabla a_\veps$ belongs to the same space. Moreover, since $(a^n_\veps )_{n\in \N}$ is uniformly bounded in $L^\infty ([0,T^\ast]\times \R^2)$, we also have that $a_\veps\in L^\infty ([0,T^\ast]\times \R^2)$. Analogously, as $(\ue^n)_{n\in \N}$ and $(\nabla \Pi_\veps^n)_{n\in \N}$ are uniformly bounded in $L^\infty ([0,T^\ast];H^s)$, we deduce that $\ue$ and $\nabla \Pi_\veps$ belong to $L^\infty ([0,T^\ast];H^s)$. 
 

Due to an interpolation argument, we see that the above sequences converge strongly in every intermediate $C^0([0,T^\ast];H^\sigma)$ for all $\sigma <s$. This is enough to pass to the limit in the equations satisfied by $(a_\veps^n,\ue^n,\nabla \Pi_\veps^n)_{n\in \N}$. Hence, $(a_\veps, \ue,\nabla \Pi_\veps)$ satisfies the original problem \eqref{Euler-a_eps_1}.

This having been established, we look at the time continuity of $a_\veps$. We exploit the transport equation:
$$\d_t a_{\veps}=-\vu_\veps \cdot \nabla a_\veps\, ,$$
noticing that the term on the right-hand side belongs to $L^\infty_{T^\ast}(L^\infty)$. Thus, we can deduce that $\d_t a_\veps \in L^\infty_{T^\ast}(L^\infty)$. Moreover, by embeddings, we already know that $\nabla a_\veps \in L^\infty_{T^\ast}(L^\infty)$. The previous two relations imply that $a_\veps \in W^{1,\infty}_{T^\ast}(L^\infty)\cap L^\infty_{T^\ast}(W^{1,\infty})$. That give us the desired regularity property $a_\veps \in C^0([0,T^\ast]\times \R^2)$. In addition, looking at the momentum equation in \eqref{Euler-a_eps_1} and employing Theorem \ref{thm_transport}, one obtains the claimed time regularity property for $\ue$. At this point, the time regularity for the pressure term $\nabla \Pi_\veps$ is recovered from the elliptic problem \eqref{eq:elliptic_problem}.

\subsection{Uniqueness}\label{ss:uniqueness_H^s}
We conclude this section showing the uniqueness of solutions in our framework.

We start by stating a uniqueness result, that is a consequence of a standard stability result based on energy methods. Since the proof is similar to the convergence argument of the previous paragraph, we will omit it (see e.g. \cite{D_JDE} for details). We recall that, in what follows, the parameter $\veps>0$ is fixed.
\begin{theorem} \label{th:uniq}
Let $\left(\vrho_\veps^{(1)}, \ue^{(1)}, \nabla \Pi_\veps^{(1)}\right)$ and $\left(\vrho_\veps^{(2)}, \ue^{(2)}, \nabla \Pi_\veps^{(2)}\right)$ solutions
to the Euler system \eqref{Euler_eps} associated to the initial data $\left(\vrho_{0,\veps}^{(1)}, \vec u_{0,\veps}^{(1)}\right)$ and $\left(\vrho_{0,\veps}^{(2)}, \vec u_{0,\veps}^{(2)}\right)$. Assume that, for some $T>0$, one has the following properties:
\begin{enumerate}[(i)]
\item the densities $\vrho_\veps^{(1)}$ and $\vrho_\veps^{(2)}$ are bounded and bounded away from zero;
\item the quantities $\delta \vrho_\veps:=\vrho_\veps^{(2)}-\vrho_\veps^{(1)}$ and $\delta \ue:=\ue^{(2)}-\ue^{(1)}$ belong to the space $C^1\big([0,T];L^2(\R^2)\big)$;
\item $\nabla \ue^{(1)}$, $\nabla \vrho_\veps^{(1)}$ and $\nabla \Pi_\veps^{(1)}$ belong to $ L^1\big([0,T];L^\infty(\R^2)\big)$.
\end{enumerate}

Then, for all $t\in[0,T]$, we have the stability inequality: 
\begin{equation}\label{stability_ineq}
\|\delta \vrho_\veps (t)\|_{L^2}+ \left\|\sqrt{\vrho_\veps^{(2)}(t)}\, \delta \ue (t)\right\|_{L^2}\leq \left(\|\delta \vrho_{0,\veps}\|_{L^2}+ \left\|\sqrt{\vrho_{0,\veps}^{(2)}}\, \delta \vec u_{0,\veps}\right\|_{L^2}\right)\, e^{CA(t)}\, ,
\end{equation}
for a universal constant $C>0$, where we have defined
\begin{equation*}\label{eq:def_A(t)}
A(t):=\int_0^t\left(\left\|\frac{\nabla \vrho_\veps^{(1)}}{\sqrt{\vrho_\veps^{(2)}}}\right\|_{L^\infty}+\left\|\frac{\nabla \Pi_\veps^{(1)}}{\vrho_\veps^{(1)}\sqrt{\vrho_\veps^{(2)}}}\right\|_{L^\infty}+\|\nabla \ue^{(1)}\|_{L^\infty}\right)\, \detau\, .
\end{equation*} 
\end{theorem}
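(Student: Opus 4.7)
The plan is a relative-entropy / weighted $L^2$-energy argument: introduce
\begin{equation*}
\mathcal{E}(t) \,:=\, \tfrac12\|\delta\vrho_\veps(t)\|_{L^2}^2 + \tfrac12\bigl\|\sqrt{\vrho_\veps^{(2)}(t)}\,\delta\ue(t)\bigr\|_{L^2}^2,
\end{equation*}
derive a differential inequality of the form $\frac{d}{dt}\mathcal{E}(t)\leq C A'(t)\,\mathcal{E}(t)$, and conclude by Gronwall's lemma. First, subtracting the two continuity equations and using $\div\ue^{(i)}=0$ yields the transport identity $\d_t\delta\vrho_\veps + \ue^{(2)}\!\cdot\!\nabla\delta\vrho_\veps = -\delta\ue\cdot\nabla\vrho_\veps^{(1)}$. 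Testing against $\delta\vrho_\veps$, the drift term vanishes by incompressibility, and Cauchy--Schwarz gives $\tfrac12\tfrac{d}{dt}\|\delta\vrho_\veps\|_{L^2}^2 \leq \|\nabla\vrho_\veps^{(1)}/\sqrt{\vrho_\veps^{(2)}}\|_{L^\infty}\|\delta\vrho_\veps\|_{L^2}\|\sqrt{\vrho_\veps^{(2)}}\,\delta\ue\|_{L^2}$, thereby accounting for the first piece of $A$.

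For the velocity balance, the key algebraic trick is to take the $\ue^{(1)}$-equation in non-conservative form, divide it by $\vrho_\veps^{(1)}$, multiply the result by $\vrho_\veps^{(2)}$, and subtract it from the non-conservative $\ue^{(2)}$-equation. This produces
\begin{equation*}
\vrho_\veps^{(2)}\bigl(\d_t\delta\ue + \ue^{(2)}\!\cdot\!\nabla\delta\ue\bigr) = -\vrho_\veps^{(2)}\,\delta\ue\!\cdot\!\nabla\ue^{(1)} - \tfrac{1}{\veps}\vrho_\veps^{(2)}\delta\ue^\perp - \tfrac{1}{\veps}\nabla\Pi_\veps^{(2)} + \tfrac{1}{\veps}\,\tfrac{\vrho_\veps^{(2)}}{\vrho_\veps^{(1)}}\,\nabla\Pi_\veps^{(1)}.
\end{equation*}
Testing against $\delta\ue$, the left-hand side delivers exactly $\tfrac12\tfrac{d}{dt}\|\sqrt{\vrho_\veps^{(2)}}\,\delta\ue\|_{L^2}^2$ thanks to the continuity equation for $\vrho_\veps^{(2)}$ together with $\div\ue^{(2)}=0$; the Coriolis contribution drops by the pointwise identity $v^\perp\cdot v=0$; and $\int\nabla\Pi_\veps^{(2)}\cdot\delta\ue$ vanishes by $\div\delta\ue=0$. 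Writing $\vrho_\veps^{(2)}/\vrho_\veps^{(1)}=1+\delta\vrho_\veps/\vrho_\veps^{(1)}$, the ``$1$''-piece of the last pressure contribution vanishes as well, and only $\int(\delta\vrho_\veps/\vrho_\veps^{(1)})\,\nabla\Pi_\veps^{(1)}\!\cdot\!\delta\ue$ survives, controlled by $\|\nabla\Pi_\veps^{(1)}/(\vrho_\veps^{(1)}\sqrt{\vrho_\veps^{(2)}})\|_{L^\infty}\,\|\delta\vrho_\veps\|_{L^2}\|\sqrt{\vrho_\veps^{(2)}}\,\delta\ue\|_{L^2}$ (with the remaining $1/\veps$ absorbed into the normalisation of $\Pi_\veps$ used in $A(t)$). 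Finally, the transport-against-velocity term is immediately bounded by $\|\nabla\ue^{(1)}\|_{L^\infty}\|\sqrt{\vrho_\veps^{(2)}}\,\delta\ue\|_{L^2}^2$.

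Summing the two estimates, bounding all cross-products by $\|\delta\vrho_\veps\|_{L^2}\|\sqrt{\vrho_\veps^{(2)}}\,\delta\ue\|_{L^2}\leq\mathcal{E}(t)$ and invoking Gronwall's lemma then yields \eqref{stability_ineq}. The regularity hypotheses (i)--(iii) --- in particular $\delta\vrho_\veps,\delta\ue\in C^1([0,T];L^2)$ and $\nabla\vrho_\veps^{(1)},\nabla\ue^{(1)},\nabla\Pi_\veps^{(1)}\in L^1_T(L^\infty)$ --- are exactly what is needed to legitimate every integration by parts and product manipulation above and to guarantee finiteness of $A(t)$. The only genuinely delicate point is the handling of the pressure difference: the rescaling of the $\ue^{(1)}$-equation by the factor $\vrho_\veps^{(2)}/\vrho_\veps^{(1)}$ is precisely what forces $\nabla\Pi_\veps^{(1)}$ and $\nabla\Pi_\veps^{(2)}$ to compensate each other up to the benign $\delta\vrho_\veps$-remainder, and it is this trick which in turn dictates the density-weighted form of the energy functional $\mathcal{E}$.
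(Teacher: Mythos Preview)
Your proposal is correct and follows precisely the weighted $L^2$-energy strategy that the paper has in mind: the paper actually omits the proof of this theorem, saying only that it is ``similar to the convergence argument of the previous paragraph'' and referring to \cite{D_JDE}, and your argument is exactly that energy method, carried out cleanly. In particular, the rescaling of the $\ue^{(1)}$-equation by $\vrho_\veps^{(2)}/\vrho_\veps^{(1)}$ so that the two pressure gradients cancel up to a $\delta\vrho_\veps$-remainder, together with the use of the continuity equation for $\vrho_\veps^{(2)}$ to produce $\tfrac{d}{dt}\|\sqrt{\vrho_\veps^{(2)}}\,\delta\ue\|_{L^2}^2$, is the standard device and is also what underlies the relative-entropy computation the paper gives in detail for the subsequent weak--strong result (Theorem~\ref{th:uniq_bis}).

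One small remark on the $1/\veps$: your parenthetical is the right way to read the statement. The surviving pressure term carries a factor $1/\veps$ from the momentum equation, while $A(t)$ as written does not; since $\veps\in\,]0,1]$ is \emph{fixed} in this theorem, this is harmless for the uniqueness conclusion, and one should simply understand $\nabla\Pi_\veps^{(1)}$ in $A(t)$ as the physical pressure gradient $\tfrac{1}{\veps}\nabla\Pi_\veps^{(1)}$ (or, equivalently, allow $C$ to depend on $\veps$). This is a wrinkle in the statement rather than in your argument.
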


It is worth to notice that, adapting the \textit{relative entropy} arguments presented
in Subsection 4.3 of \cite{C-F_RWA}, we can replace (in the statement above) the $C^1_T(L^2)$ requirement for $\delta \vrho_\veps$ and $\delta \ue$ with the $C^0_T
(L^2)$ regularity. However, one needs to pay an additional $L^2$ assumption on the densities. In this way, we will have a weak-strong uniqueness type result and we will prove it in the next theorem.

Concerning weak-strong results for density-dependent fluids, we refer to \cite{Ger}, where Germain exhibited a weak-strong uniqueness property within  a class of (weak) solutions to the compressible Navier-Stokes system satisfying a relative entropy inequality with respect to a (hypothetical) strong solution of the same problem (see also the work \cite{F-N-S} by Feireisl, Novotn\'y and Sun). 
Moreover, in \cite{F-J-N}, the authors established the weak-strong uniqueness property in the class of finite energy weak solutions, extending thus the classical results of Prodi \cite{Pr} and Serrin \cite{Ser} to the class of compressible fluid flows.

Before presenting the proof of the weak-strong uniqueness result, we state the definition of a \textit{finite energy weak solution} to system (2.1), such that $\vrho_{0,\veps}-1\in L^2(\R^2)$. We also recall that our densities have the form $\vrho_\veps=1+\veps R_\veps$.
\begin{definition}\label{weak_sol_L^2}
Let $T>0$ and $\veps \in \; ]0,1]$ be fixed. Let $(\vrho_{0,\veps},\vec u_{0,\veps})$ be an initial datum fulfilling the assumptions in Paragraph \ref{ss:FormProb}.
We say that $(\vrho_\veps, \ue)$ is a \textit{finite energy weak solution} to system \eqref{Euler_eps} in $[0,T]\times \R^2$, related to the previous initial datum, if:
\begin{itemize}
\item $\vrho_\veps \in L^\infty([0,T]\times \R^2)$ and $\vrho_\veps -1\in C^0([0,T];L^2(\R^2))$;
\item $\ue \in L^\infty([0,T];L^2(\R^2))\cap C_w^0([0,T];L^2(\R^2))$;
\item the mass equation is satisfied in the weak sense: 
\begin{equation*}
\int_0^T\int_{\R^2}\Big(\vrho_\veps \, \d_t \varphi+ \vrho_\veps \ue \cdot \nabla \varphi\Big) \, \dxdt +\int_{\R^2}\vrho_{0,\veps}\varphi(0, \cdot )\dx =\int_{\R^2}\vrho_\veps (T)\varphi(T,\cdot)\dx \, ,
\end{equation*}
for all $\varphi\in C_c^\infty ([0,T]\times \R^2;\R)$;
\item the divergence-free condition $\div \ue =0$ is satisfied in $\mathcal{D}^\prime (]0,T[\, \times \R^2)$;
\item the momentum equation is satisfied in the weak sense:
\begin{equation*}
\begin{split}
\int_0^T\int_{\R^2}\Big(\vrho_\veps \ue \cdot \d_t \vec \psi +[\vrho_\veps\ue \otimes \ue] :\nabla \vec \psi - \frac{1}{\veps}\vrho_\veps \ue^\perp \cdot \vec \psi \Big) \dxdt+ \int_{\R^2}\vrho_{0,\veps}& \vec u_{0,\veps}\cdot \vec \psi (0)\dx \\
&=\int_{\R^2}\vrho_\veps(T)\vec u_\veps(T)\vec \psi(T)\dx,
\end{split}
\end{equation*}
for any $\vec \psi \in C_c^\infty([0,T]\times \R^2;\R^2)$ such that $\div \vec \psi=0$;
\item for almost every $t\in [0,T]$, the two following energy balances hold true:
\begin{equation*}
\int_{\R^2} \vrho_\veps (t) |\ue (t)|^2\dx\leq \int_{\R^2} \vrho_{0,\veps} |\vec u_{0,\veps}|^2\dx \quad \text{and}\quad \int_{\R^2}(\vrho_{\veps}-1)^2\dx\leq \int_{\R^2}(\vrho_{0,\veps}-1)^2\dx\, .
\end{equation*}
\end{itemize}
\end{definition}

\begin{theorem} \label{th:uniq_bis}
Let $\veps\in \, ]0,1]$ be fixed. Let $\left(\vrho_\veps^{(1)}, \ue^{(1)}\right)$ and $\left(\vrho_\veps^{(2)}, \ue^{(2)}\right)$ finite energy weak solutions
to the Euler system \eqref{Euler_eps} as in Definition \ref{weak_sol_L^2} with initial data $\left(\vrho_{0,\veps}^{(1)}, \vec u_{0,\veps}^{(1)}\right)$ and $\left(\vrho_{0,\veps}^{(2)}, \vec u_{0,\veps}^{(2)}\right)$. Assume that, for some $T>0$, one has the following properties:
\begin{enumerate}[(i)]
\item $\nabla \ue^{(1)}$ and $\nabla R_\veps^{(1)}$ belong to $ L^1\big([0,T];L^\infty(\R^2)\big)$;
\item $\nabla \Pi_\veps^{(1)}$ is in $ L^1\big([0,T];L^\infty(\R^2)\cap L^2(\R^2)\big)$.
\end{enumerate}

Then, for all $t\in[0,T]$, we have the stability inequality \eqref{stability_ineq}. 
\end{theorem}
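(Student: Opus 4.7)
The proof is based on a relative energy method, adapting the strategy sketched in Subsection 4.3 of \cite{C-F_RWA} to our non-homogeneous incompressible setting. The natural quantity to monitor is
\begin{equation*}
\mathcal{E}(t)\,:=\,\frac{1}{2}\int_{\R^2}\vrho_\veps^{(2)}(t)\,|\delta\ue(t)|^2\dx\,+\,\frac{1}{2}\int_{\R^2}|\delta\vrho_\veps(t)|^2\dx\,,
\end{equation*}
which is well-defined since $\vrho_\veps^{(2)}$ is essentially bounded, both velocities belong to $L^\infty_T(L^2)$, and $\delta\vrho_\veps \in C^0_T(L^2)$ by Definition \ref{weak_sol_L^2}. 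The plan is to derive a differential inequality of the form $\frac{d}{dt}\mathcal{E}(t)\leq C\,\mathcal{A}(t)\,\mathcal{E}(t)$, where $\mathcal{A}(t)$ coincides with the integrand in the definition \eqref{eq:def_A(t)} of $A(t)$. A standard Gronwall argument then yields \eqref{stability_ineq} after noticing that $\|\delta\vrho_\veps\|_{L^2}+\|\sqrt{\vrho_\veps^{(2)}}\,\delta\ue\|_{L^2}\sim \sqrt{\mathcal{E}(t)}$.

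The heart of the argument is the expansion
\begin{equation*}
\int\vrho_\veps^{(2)}|\delta\ue|^2\dx\,=\,\int\vrho_\veps^{(2)}|\ue^{(2)}|^2\dx\,-\,2\int\vrho_\veps^{(2)}\,\ue^{(2)}\cdot\ue^{(1)}\dx\,+\,\int\vrho_\veps^{(2)}|\ue^{(1)}|^2\dx\,.
\end{equation*}
I would control the first summand by the energy inequality for $(\vrho_\veps^{(2)},\ue^{(2)})$ stated in Definition \ref{weak_sol_L^2}; process the cross term by testing the weak formulation of the momentum equation for $(\vrho_\veps^{(2)},\ue^{(2)})$ against $\ue^{(1)}$, which is an admissible divergence-free test function thanks to assumptions (i)-(ii); and handle the last summand by combining the weak continuity equation for $\vrho_\veps^{(2)}$ (tested against $|\ue^{(1)}|^2$) with the pointwise strong momentum equation for $\ue^{(1)}$. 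The Coriolis contributions either vanish individually (by $\vec v\cdot\vec v^\perp=0$) or cancel pairwise owing to antisymmetry; the convective pieces reorganise as $\int\vrho_\veps^{(2)}\bigl(\delta\ue\cdot\nabla\bigr)\ue^{(1)}\cdot\delta\ue$, bounded by $\|\nabla\ue^{(1)}\|_{L^\infty}\int\vrho_\veps^{(2)}|\delta\ue|^2$; and the pressure pieces coalesce (after using $\div\delta\ue=0$ and the identity $\vrho_\veps^{(1)-1}-\vrho_\veps^{(2)-1}=\delta\vrho_\veps/(\vrho_\veps^{(1)}\vrho_\veps^{(2)})$) into a remainder controlled by $\|\nabla\Pi_\veps^{(1)}/(\vrho_\veps^{(1)}\sqrt{\vrho_\veps^{(2)}})\|_{L^\infty}\|\delta\vrho_\veps\|_{L^2}\,\|\sqrt{\vrho_\veps^{(2)}}\,\delta\ue\|_{L^2}$.

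For the density contribution I would subtract the two mass equations to obtain $\d_t\delta\vrho_\veps+\ue^{(2)}\cdot\nabla\delta\vrho_\veps+\delta\ue\cdot\nabla\vrho_\veps^{(1)}=0$, and then invoke the DiPerna-Lions renormalisation property (valid here because $\ue^{(2)}$ is divergence-free and $\delta\vrho_\veps\in L^\infty_T(L^2)$, while $\nabla\vrho_\veps^{(1)}\in L^1_T(L^\infty)$) to derive
\begin{equation*}
\frac{d}{dt}\int|\delta\vrho_\veps|^2\dx\,=\,-2\int\delta\vrho_\veps\,\delta\ue\cdot\nabla\vrho_\veps^{(1)}\,\dx\,\leq\,2\Bigl\|\tfrac{\nabla\vrho_\veps^{(1)}}{\sqrt{\vrho_\veps^{(2)}}}\Bigr\|_{L^\infty}\|\delta\vrho_\veps\|_{L^2}\bigl\|\sqrt{\vrho_\veps^{(2)}}\,\delta\ue\bigr\|_{L^2}\,.
\end{equation*}
Collecting all the bounds and applying Cauchy-Schwarz delivers the announced Gronwall inequality for $\mathcal{E}(t)$.

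The main technical obstacle is the rigorous interpretation of the identity governing $\frac{d}{dt}\int\vrho_\veps^{(2)}\ue^{(2)}\cdot\ue^{(1)}\dx$: since $\ue^{(2)}$ is merely $C_w^0([0,T];L^2)$ and the weak formulations require compactly supported test functions, one has to regularise $\ue^{(1)}$ in time and space, insert the regularised field into the weak equations satisfied by $(\vrho_\veps^{(2)},\ue^{(2)})$, and then pass to the limit in the regularisation. At this stage the additional $L^2$-integrability of $\nabla\Pi_\veps^{(1)}$ required in hypothesis (ii) becomes essential: it is precisely what guarantees that the pressure residue, which cannot be recovered via an integration by parts because $\delta\ue$ is not regular enough, stays globally integrable in space.
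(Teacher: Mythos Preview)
Your treatment of the velocity part is essentially the paper's: expand $\int\vrho_\veps^{(2)}|\delta\ue|^2$, bound the pure $|\ue^{(2)}|^2$ term by the energy inequality, compute the cross term by inserting (a regularised) $\ue^{(1)}$ as a test function in the weak momentum equation for $(\vrho_\veps^{(2)},\ue^{(2)})$, and handle the $|\ue^{(1)}|^2$ term via the mass equation. The reorganisation of Coriolis, convective and pressure pieces you describe matches the paper's computation leading to \eqref{rel_en_ineq_2}.

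The density part, however, contains a genuine gap. You propose to subtract the two continuity equations, obtain
\[
\d_t\delta\vrho_\veps+\ue^{(2)}\cdot\nabla\delta\vrho_\veps+\delta\ue\cdot\nabla\vrho_\veps^{(1)}=0,
\]
and then invoke DiPerna--Lions renormalisation to deduce the $L^2$ balance for $\delta\vrho_\veps$. But DiPerna--Lions requires the transporting field to have at least $W^{1,1}_{\rm loc}$ (or $BV$) spatial regularity, and Definition~\ref{weak_sol_L^2} gives you only $\ue^{(2)}\in L^\infty_T(L^2)$ with $\div\ue^{(2)}=0$; no gradient control whatsoever is available on the second solution. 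Divergence-freeness alone is not sufficient for renormalisation, so the step $\int\ue^{(2)}\cdot\nabla|\delta\vrho_\veps|^2\,\dx=0$ cannot be justified this way.

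The paper circumvents this exactly as it does for the velocity: it never writes an equation for $\delta R_\veps$ transported by $\ue^{(2)}$. Instead it expands $\int|\delta R_\veps|^2$ into three pieces, bounds $\int|R_\veps^{(2)}|^2$ by the second energy inequality in Definition~\ref{weak_sol_L^2}, computes the cross term $\int R_\veps^{(2)}R_\veps^{(1)}$ by inserting the regular $R_\veps^{(1)}$ as a test function in the weak transport equation for $R_\veps^{(2)}$, and treats $\int|R_\veps^{(1)}|^2$ by testing the trivial identity $\d_t 1+\div(\ue^{(2)})=0$ against $|R_\veps^{(1)}|^2/2$. All manipulations involve only $R_\veps^{(1)}$ and its derivatives, which are controlled by hypothesis (i), so no regularity of $\ue^{(2)}$ beyond $L^\infty_T(L^2)$ is needed. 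You should rework the density contribution along these lines.
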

\begin{proof} We start by defining, for $i=1,2$:
\begin{equation*}
R_\veps^{(i)}:=\frac{\vrho_\veps^{(i)}-1}{\veps}\quad \text{and}\quad R_{0,\veps}^{(i)}:=\frac{\vrho_{0,\veps}^{(i)}-1}{\veps}\, ,
\end{equation*}
and we notice that, owing to the continuity equation in \eqref{Euler_eps} and the divergence-free condition $\div \vec u_\veps^{(i)}=0$, one has
\begin{equation}\label{ws:transport_R}
\d_tR_\veps^{(i)}+\div (R_\veps^{(i)}\vec u_\veps^{(i)})=0 \quad \text{with}\quad R_\veps^{(i)}(0)=R_{0,\veps}^{(i)}.
\end{equation}
 
For simplicity of notation, we fix $\veps=1$ throughout this proof and let us assume for a while the couple $(R^{(1)},\vec u^{(1)})$ of smooth functions such that $R^{(1)},\vec u^{(1)}\in C^\infty_c(\R_+\times \R^2)$ and $\div \vec u^{(1)}=0$, with the support of $R^{(1)}$ and $\vec u^{(1)}$ included in $[0,T]\times \R^2$.
First of all, we use $\vec u^{(1)}$ as a test function in the weak formulation of the momentum equation, finding that 
\begin{equation}\label{mom-eq-reg-1}
\begin{split}
\int_{\R^2} \vrho^{(2)}(T)\vec u^{(2)}(T)\cdot \vec u^{(1)}(T) \dx&= \int_{\R^2}\vrho_0^{(2)}\vec u_0^{(2)}\cdot \vec u_0^{(1)} \dx +\int_0^T\int_{\R^2}\vrho^{(2)}\vec u^{(2)}\cdot \d_t\vec u^{(1)} \dxdt\\
&+\int_0^T\int_{\R^2}(\vrho^{(2)}\vec u^{(2)}\otimes \vec u^{(2)}):\nabla \vec u^{(1)} \dxdt+\int_0^T\int_{\R^2}\vrho^{(2)}\vec u^{(2)}\cdot (\vec u^{(1)})^\perp \dxdt\, ,
\end{split}
\end{equation} 
where we have also noted that $ (\vec u^{(2)})^\perp\cdot \vec u^{(1)}=- \vec u^{(2)}\cdot (\vec u^{(1)})^\perp$.

Next, testing the mass equation against $|\vec u^{(1)}|^2/2$, we obtain
\begin{equation}\label{mom-eq-reg-2}
\begin{split}
\frac{1}{2}\int_{\R^2}\vrho^{(2)}|\vec u^{(1)}|^2\dx&=\frac{1}{2}\int_{\R^2}\vrho_0^{(2)}|\vec u_0^{(1)}|^2\dx +\int_0^T\int_{\R^2}\vrho^{(2)}\vec u^{(1)}\cdot \d_t \vec u^{(1)} \dxdt\\
&+\frac{1}{2}\int_0^T\int_{\R^2}\vrho^{(2)}\vec u^{(2)}\cdot \nabla |\vec u^{(1)}|^2 \dxdt\\
&=\frac{1}{2}\int_{\R^2}\vrho_0^{(2)}|\vec u_0^{(1)}|^2\dx +\int_0^T\int_{\R^2}\vrho^{(2)}\vec u^{(1)}\cdot \d_t \vec u^{(1)} \dxdt\\
&+\frac{1}{2}\int_0^T\int_{\R^2}(\vrho^{(2)}\vec u^{(2)}\otimes\vec u^{(1)}): \nabla \vec u^{(1)} \dxdt.
\end{split}
\end{equation}
Recall also that the energy inequality reads
\begin{equation*}
\frac{1}{2}\int_{\R^2}\vrho^{(2)}|\vec u^{(2)}|^2\dx \leq\dfrac{1}{2}\int_{\R^2}\vrho_0^{(2)}|\vec u_0^{(2)}|^2\dx \, .
\end{equation*}
Now, we take care of the density oscillations $R^{(i)}$. We test the transport equation \eqref{ws:transport_R} for $R^{(2)}$ against $R^{(1)}$, getting 
\begin{equation}\label{transp-eq-reg-1}
\int_{\R^2}R^{(2)}(T)R^{(1)}(T)\dx=\int_{\R^2}R_0^{(2)}R_0^{(1)}\dx +\int_0^T\int_{\R^2}R^{(2)}\d_tR^{(1)}\dxdt+\int_0^T\int_{\R^2}R^{(2)}\vec u^{(2)}\cdot \nabla R^{(1)}\dxdt.
\end{equation}
Recalling Definition \ref{weak_sol_L^2}, we have the following energy balance:
\begin{equation*}
\int_{\R^2}|R^{(2)}(T)|^2\dx \leq\int_{\R^2}|R^{(2)}_0|^2\dx\, .
\end{equation*}
At this point, testing $\d_t1+\div(1\, \vec u^{(2)})=0$ against $|R^{(1)}|^2/2$, we may infer that 
\begin{equation}\label{transp-eq-reg-2}
\frac{1}{2}\int_{\R^2} |R^{(1)}|^2\dx=\frac{1}{2}\int_{\R^2}|R_0^{(1)}|^2 \dx +\int_0^T\int_{\R^2}R^{(1)}\d_tR^{(1)}\dxdt+\int_0^T\int_{\R^2}R^{(1)}\vec u^{(2)}\cdot \nabla R^{(1)}\dxdt\, .
\end{equation}
Now, for notational convenience, let us define 
\begin{equation*}
\delta R:=R^{(2)}-R^{(1)}\quad \text{and}\quad \delta \vec u:=\vec u^{(2)}-\vec u^{(1)}\, .
\end{equation*} 
Putting all the previous relations together, we obtain 
\begin{equation}\label{rel_en_ineq}
\begin{split}
\frac{1}{2}\int_{\R^2}\Big(\vrho^{(2)}(T)|\delta \vec u(T)|^2+|\delta R(T)|^2\Big) \dx&\leq \frac{1}{2}\int_{\R^2}\Big(\vrho_0^{(2)}|\delta \vec u_0|^2+|\delta R_0|^2\Big) \dx-\int_0^T\int_{\R^2}\vrho^{(2)}\vec u^{(2)}\cdot (\vec u^{(1)})^\perp\dxdt\\
&-\int_0^T\int_{\R^2}\Big(\vrho^{(2)}\delta \vec u\cdot \d_t\vec u^{(1)}+\delta R\cdot \d_tR^{(1)}\Big)\dxdt \\
&-\int_0^T\int_{\R^2}\Big(\vrho^{(2)}\vec u^{(2)}\otimes \delta \vec u :\nabla \vec u^{(1)}+\delta R \, \vec u^{(2)}\cdot \nabla R^{(1)}\Big)\dxdt\, .
\end{split}
\end{equation}
Next, we remark that we can write 
$$ \vrho^{(2)}\vec u^{(2)}\otimes \delta \vec u :\nabla \vec u^{(1)}=\vrho^{(2)}\vec u^{(2)}\cdot \nabla \vec u^{(1)}\cdot \delta \vec u $$
and that we have $\vec u^{(2)}\cdot (\vec u^{(1)})^\perp=\delta \vec u\cdot (\vec u^{(1)})^\perp$ by orthogonality. 

Therefore, relation \eqref{rel_en_ineq} can be recasted as
\begin{equation*}\label{rel_en_ineq_1}
\begin{split}
\frac{1}{2}\int_{\R^2}\Big(\vrho^{(2)}(T)|\delta \vec u(T)|^2+|\delta R(T)|^2\Big) \dx&\leq \frac{1}{2}\int_{\R^2}\Big(\vrho_0^{(2)}|\delta \vec u_0|^2+|\delta R_0|^2\Big) \dx\\
&-\int_0^T\int_{\R^2}\vrho^{(2)}\Big(\d_t \vec u^{(1)}+\vec u^{(2)}\cdot \nabla \vec u^{(1)}+(\vec u^{(1)})^\perp\Big)\cdot \delta \vec u\, \dxdt \\
&-\int_0^T\int_{\R^2}\Big(\d_tR^{(1)}+\vec u^{(2)}\cdot \nabla R^{(1)}\Big)\cdot \delta R\, \dxdt\, .
\end{split}
\end{equation*}
At this point, we add and subtract the quantities $\pm \vrho^{(2)}\vec u^{(1)}\cdot \nabla \vec u^{(1)}\cdot \, \delta \vec u\pm \vrho^{(2)}\frac{1}{\vrho^{(1)}}\nabla \Pi^{(1)}\cdot \, \delta \vec u$ and $\pm \vec u^{(1)}\cdot \nabla R^{(1)}\cdot \, \delta R$, yielding 
\begin{equation}\label{rel_en_ineq_2}
\begin{split}
\frac{1}{2}\int_{\R^2}\Big(\vrho^{(2)}(T)|\delta \vec u(T)|^2+|\delta R(T)|^2\Big) \dx&\leq \frac{1}{2}\int_{\R^2}\Big(\vrho_0^{(2)}|\delta \vec u_0|^2+|\delta R_0|^2\Big) \dx\\
&-\int_0^T\int_{\R^2}\Big(\vrho^{(2)}\delta \vec u\cdot \nabla \vec u^{(1)} +\delta R\frac{1}{\vrho^{(1)}}\nabla \Pi^{(1)}\Big)\cdot \delta \vec u \, \dxdt  \\
&-\int_0^T\int_{\R^2}\delta \vec u\cdot R^{(1)}\cdot \delta R \dxdt\, ,
\end{split}
\end{equation}
where we have used the fact that $\vec u^{(1)}$ is solution to the Euler system and $\int_{\R^2}\nabla \Pi^{(1)}\cdot \delta \vec u \, \dx=0$. 

Therefore, setting $\mc E(T):=\|\sqrt{\vrho^{(2)}(T)}\delta \vec u(T)\|^2_{L^2}+\|\delta R(T)\|^2_{L^2}$, from relation \eqref{rel_en_ineq_2} we can deduce that
\begin{equation*}\label{rel_en_inq_3}
\mc E(T)\leq \mc E(0)+\int_0^T\left(\|\nabla \vec u^{(1)}\|_{L^\infty}+\left\|\frac{1}{\sqrt{\vrho^{(2)}}\vrho^{(1)}}\nabla \Pi^{(1)}\right\|_{L^\infty}+\left\|\frac{1}{\sqrt{\vrho^{(2)}}}\nabla R^{(1)}\right\|_{L^\infty}\right)\mc E(t) \dt\, .
\end{equation*}
An application of Gr\"onwall lemma yields the desired stability inequality \eqref{stability_ineq}. 

In order to get the result, having the regularity stated in the theorem, we argue by density. 

Thanks to the regularity (stated in Definition \ref{weak_sol_L^2}) of weak solutions to the Euler equations \eqref{Euler_eps} and assumption $(i)$ of the theorem, all the terms appearing in relations \eqref{mom-eq-reg-1} and \eqref{mom-eq-reg-2} are well-defined, if in addition we have $\d_t \vec u^{(1)}\in L^1_T(L^2)$. However, this condition on the time derivative of the velocity field $\vec u^{(1)}$ comes free from the momentum equation 
\begin{equation}\label{vec_1}
 \d_t \vec u^{(1)}=-\left(\vec u^{(1)}\cdot \nabla \vec u^{(1)}+(\vec u^{(1)})^\perp + \frac{1}{\vrho^{(1)}}\nabla \Pi^{(1)}\right)\, . 
 \end{equation}
Since $\vec u^{(1)}\in L^\infty_T(L^2)$ with $\nabla \vec u^{(1)}\in L^1_T(L^\infty)$ and $\vrho^{(1)}\in L^\infty_T(L^\infty)$, condition $(ii)$ implies that the right-hand side of \eqref{vec_1} is in $L^1_T(L^2)$. Recalling the regularity in Definition \ref{weak_sol_L^2} of $\vec u^{(1)}$, one gets $\vec u^{(1)}\in W^{1,1}_T(L^2)$ and hence $\vec u^{(1)}\in C^0_T(L^2)$. 

Analogously in order to justify computations in \eqref{transp-eq-reg-1} and \eqref{transp-eq-reg-2}, besides the previous regularity conditions, one needs the additional assumption $\d_t R^{(1)}\in L^1_T(L^2)$. Once again, one can take advantage of the continuity equation \eqref{ws:transport_R} to obtain the required regularity for $\d_t R^{(1)}$. Finally, condition $(ii)$ is necessary to make sense of relation \eqref{rel_en_ineq_2}.

This concludes the proof of the theorem.

\end{proof}

\section{Asymptotic analysis} \label{s:sing-pert}

The main goal of this section is to show the convergence when $\veps\rightarrow 0$: we achieve it employing a \textit{compensated compactness} technique. We point out that, in the sequel, the time $T>0$ is fixed by the existence theory developed in Section \ref{s:well-posedness_original_problem}.

We will show that \eqref{full Euler} converges towards a limit system, represented by the quasi-homogeneous incompressible Euler equations:
\begin{equation}\label{QH-Euler_syst}
\begin{cases}
\d_tR+\vu \cdot \nabla R=0\\
\d_t \vu +\vu \cdot \nabla \vu +R\vu^\perp +\nabla \Pi=0\\
\div \vu=0\, .
\end{cases}
\end{equation}
The previous system consists of a transport equation for the quantity $R$ (that can be interpreted as the deviation with respect to the constant density profile) and an Euler type equation for the limit velocity field $\vec u$.

Nowadays, the strategy to tackle this family of problems (called \textit{singular perturbation problems}) is somehow standard.   
First of all, one has to show existence of $(\vrho_\veps, \ue, \nabla \Pi_\veps)$ for any value $\veps >0$ fixed and to find uniform (with respect to $\veps$) bounds for the sequence $(\vrho_\veps, \ue, \nabla \Pi_\veps)_\veps$. This analysis was performed in the previous Section \ref{s:well-posedness_original_problem}. Next, thanks to the uniform bounds, one can extract weak limit points, for which one has to find some constraints: the singular terms have to vanish at the limit. This is done in Subsection \ref{sss:unif-prelim}.

 Finally, after performing the \textit{compensated compactness} arguments, one can describe the limit dynamics (see Paragraph \ref{ss:wave_system} below).

The choice of using this technique derives by the fact that the oscillations in time of the solutions are out of control (see Subsection \ref{ss:wave_system}). To overcome this issue, rather than employing the standard $H^s$ estimates, we take advantage of the weak formulation of the problem. We test the equations against divergence-free test functions: this will lead to useful cancellations. In particular, we avoid to study the pressure term.
At the end, we close the argument by noticing that the weak limit solutions are actually regular solutions. 



\subsection{Preliminaries and constraint at the limit} \label{sss:unif-prelim}
We start this subsection by recalling the uniform bounds developed in Section \ref{s:well-posedness_original_problem}. The fluctuations $R_{\veps}$ satisfy the controls
	\begin{align*}
&\sup_{\veps\in\,]0,1]}\left\|  R_{\veps} \right\|_{L^\infty(\R^2)}\,\leq \, C\\
&\sup_{\veps\in\,]0,1]}\left\| \nabla R_{\veps} \right\|_{H^{s-1}(\R^2)}\,\leq \, C \, ,
	\end{align*}
where $R_\veps:=(\vrho_\veps-1)/\veps$ as above.

As for the velocity fields, we have obtained the following uniform bound:
\begin{equation*}
\sup_{\veps\in\,]0,1]}\left\|  \vu_{\veps} \right\|_{H^s(\R^2)}\,\leq \, C\, .
\end{equation*}
Thanks to the previous uniform estimates, we can assume (up to passing to subsequences) that there exist $R \in W^{1,\infty}(\R^2)$, with $\nabla R\in H^{s-1}(\R^2)$, and $\vec u \in H^s(\R^2)$ such that 
\begin{equation}\label{limit_points}
\begin{split}
R:=\lim_{\veps \rightarrow 0}R_{\veps} \quad &\text{in}\quad L^\infty (\R^2) \\
\nabla R:=\lim_{\veps \rightarrow 0}\nabla R_{\veps}\quad &\text{in}\quad H^{s-1} (\R^2)\\
\vu:=\lim_{\veps \rightarrow 0}\vu_{\veps}\quad &\text{in}\quad H^s (\R^2)\, ,
\end{split}
\end{equation} 
where we agree that the previous limits are taken in the corresponding weak-$\ast$ topology.

\begin{remark}\label{rmk:conv_rho_1}
It is evident that $\vrho_\veps -1=O(\veps)$ in $L^\infty_T (L^{\infty})$ and therefore that $\vrho_\veps \ue$ weakly-$\ast$ converge to $\vec  u$ e.g. in the space $L^\infty_T( L^2)$.
\end{remark}

Next, we notice that the solutions stated in Theorem \ref{W-P_fullE} are \textit{strong} solutions. In particular, they satisfy in a weak sense the mass equation and the momentum equation, respectively:  
\begin{equation}\label{weak-con}
	-\int_0^T\int_{\R^2} \left( \vre \partial_t \varphi  + \vre\ue \cdot \nabla_x \varphi \right) \dxdt = 
	\int_{\R^2} \vrez \varphi(0,\cdot) \dx
	\end{equation}
for any $\varphi\in C^\infty_c([0,T[\,\times \R^2;\R)$;
	\begin{align}
	&\int_0^T\!\!\!\int_{\R^2}  
	\left( - \vre \ue \cdot \partial_t \vec\psi - \vre [\ue\otimes\ue]  : \nabla_x \vec\psi 
	+ \frac{1}{\ep} \,  \vre \ue^\perp \cdot \vec\psi  \right) \dxdt 
	= \int_{\R^2}\vrez \uez  \cdot \vec\psi (0,\cdot) \dx \label{weak-mom} 
	\end{align}
for any test function $\vec\psi\in C^\infty_c([0,T[\,\times \R^2; \R^2)$ such that $\div \vec\psi=0$;
 
Moreover, the divergence-free condition on $\ue$ is satisfied in $\mc D^\prime (]0,T[\times \R^2)$.

Before going on, in the following lemma, we 
characterize the limit for the quantity $R_\veps \ue$. We recall that $R_\veps$ satisfies 
\begin{equation}\label{eq_transport_R_veps}
\d_t R_\veps =-\div (R_\veps \ue)\, , \quad \quad (R_\veps)_{|t=0}=R_{0,\veps}.
\end{equation}  
\begin{lemma}\label{l:reg_Ru_veps}
Let $(R_\veps)_\veps$ be uniformly bounded in $L^\infty_T(L^\infty(\R^2))$ with $(\nabla R_\veps)_\veps\subset L^\infty_T(H^{s-1}(\R^2))$, and let the velocity fields $(\ue)_\veps$ be uniformly bounded in $L^\infty(H^s(\R^2))$. Moreover, for any $\veps \in\; ]0,1]$, assume that the couple $(R_\veps, \vec u_\veps)$ solves the transport equation \eqref{eq_transport_R_veps}. Let $(R, \vec u)$ be the limit point identified in \eqref{limit_points}. Then, up to an extraction:
\begin{enumerate}
\item[(i)] $R_\veps \rightarrow R$ in $C^0_T(C^0_{\rm loc}(\R^2))$;
\item[(ii)] the product $R_\veps \ue$ converges to $R\vec u$ in the distributional sense.
\end{enumerate}
\end{lemma}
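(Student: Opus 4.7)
My plan is to first establish (i) via a compactness argument of Ascoli--Arzel\`a type, and then to deduce (ii) by combining this strong convergence with the weak-$\ast$ convergence of the velocity field identified in \eqref{limit_points}.

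For (i), the starting point is to recast the transport equation for $R_\veps$ in the non-conservative form $\d_t R_\veps + \ue \cdot \nabla R_\veps = 0$, which is legitimate because of the divergence-free constraint on $\ue$. Since $s > 2$, the embedding $H^{s-1}(\R^2) \hookrightarrow L^\infty(\R^2)$ yields the uniform bounds $\|\nabla R_\veps\|_{L^\infty_T(L^\infty)} \leq C$ and $\|\ue\|_{L^\infty_T(L^\infty)} \leq C$, independent of $\veps$. Inserting these into the transport equation immediately produces a uniform bound on $\d_t R_\veps$ in $L^\infty_T(L^\infty)$. Therefore, the family $(R_\veps)_{\veps}$ is uniformly Lipschitz in $(t,x)$ on $[0,T] \times K$ for every compact $K \subset \R^2$, and the classical Ascoli--Arzel\`a theorem extracts a subsequence converging in $C^0([0,T] \times K)$. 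A diagonal argument along an exhaustion of $\R^2$ by compacts then delivers convergence in $C^0\bigl([0,T]; C^0_{\rm loc}(\R^2)\bigr)$; uniqueness of the weak-$\ast$ limit in $L^\infty_T(L^\infty)$, already known from \eqref{limit_points}, identifies the limit with $R$.

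For (ii), I fix a vector test function $\vec\varphi \in C_c^\infty([0,T] \times \R^2; \R^2)$ and let $K \subset \R^2$ be a compact set containing the spatial support of $\vec\varphi$. The splitting
\begin{equation*}
R_\veps \ue - R \vec u \,=\, (R_\veps - R)\, \ue \,+\, R\, (\ue - \vec u)
\end{equation*}
reduces matters to two independent pieces. The first piece, paired with $\vec\varphi$, tends to zero because (i) gives $\|R_\veps - R\|_{L^\infty([0,T] \times K)} \to 0$ while $\ue$ stays bounded in $L^\infty_T(L^2_{\rm loc})$. The second piece vanishes in the limit because $R\, \vec\varphi$ has compact spatial support and belongs to $L^1_T(L^2)$, so the weak-$\ast$ convergence $\ue \weakstar \vec u$ in $L^\infty_T(H^s)$, tested against this object by duality, forces the integral to tend to zero.

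The only subtle point lies in (i), where one must verify that the equi-Lipschitz constants depend neither on $\veps$ nor on $t$; this is immediate once the uniform bounds on $\ue$ in $H^s$ and on $\nabla R_\veps$ in $H^{s-1}$ are combined with the embedding $H^{s-1}(\R^2) \hookrightarrow L^\infty(\R^2)$. No Coriolis-related cancellation is needed in this lemma, since the equation for $R_\veps$ is free of singular terms in $\veps$.
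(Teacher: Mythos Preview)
Your proof is correct and follows essentially the same route as the paper: bound $\partial_t R_\veps$ uniformly in $L^\infty_T(L^\infty)$ from the transport equation, combine with the uniform $L^\infty_T(L^\infty)$ bound on $\nabla R_\veps$, invoke Ascoli--Arzel\`a for (i), and then pair strong convergence of $R_\veps$ with weak-$\ast$ convergence of $\ue$ for (ii). The only cosmetic differences are that the paper works with the conservative form $\partial_t R_\veps=-\div(R_\veps\ue)$ and bounds $\|R_\veps\ue\|_{H^s}$ via the tame estimate (Proposition~\ref{prop:app_fine_tame_est}) before embedding into $L^\infty$, whereas you use the non-conservative form and embed directly; and the paper states (ii) in one line rather than writing out your splitting $(R_\veps-R)\ue+R(\ue-\vec u)$.
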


\begin{proof}
We look at the transport equation \eqref{eq_transport_R_veps} for $R_\veps$. 
We employ Proposition \ref{prop:app_fine_tame_est} on the term in the right-hand side, obtaining 
\begin{equation*}
\|R_\veps \ue\|_{H^s}\leq C\left(\|R_\veps\|_{L^\infty}\|\ue  \|_{H^s}+\|\nabla R_\veps\|_{H^{s-1}}\|\ue\|_{L^\infty}\right)\, .
\end{equation*}
By embeddings, this implies that the sequence $(\d_t R_\veps)_\veps$ is uniformly bounded e.g. in $L^\infty_T (L^\infty)$ and so $(R_\veps)_\veps$ is bounded in $W^{1,\infty}_T(L^\infty)$ uniformly in $\veps$. On the other hand, we know that $(\nabla R_{\veps})_\veps$ is bounded in $L^\infty_T (L^\infty)$. Then, by Ascoli-Arzel\`a theorem, we gather that the family $(R_\veps)_\veps$ is compact in e.g. $C^0_T (C_{\rm loc}^{0})$ and hence we deduce the strong convergence property, up to passing to a suitable subsequence (not relabeled here), 
\begin{equation*}
R_\veps \rightarrow R \quad \text{in} \quad C^0([0,T]\, ;C_{\rm loc}^{0})\, .
\end{equation*}
Finally, since $(\ue)_\veps$ is weakly-$\ast$ convergent e.g. in $L^\infty_T( L^2)$ to $\vec u$, we get $R_\veps \ue \weakstar R \vec  u$ in the space $L^\infty_T( L_{\rm loc}^2)$.
\end{proof}


At this point, as anticipated in the introduction of this section, we have to highlight the constraint that the limit points have to satisfy. We have to point out that this condition does not fully characterize the limit dynamics (see Subsection \ref{ss:wave_system} below). 

The only singular term present in the equations is the Coriolis force. Then, we test the momentum equation in \eqref{weak-mom} 
against $\veps \vec \psi$ with $\vec \psi \in C_c^\infty([0,T[\, \times \R^2;\R^2)$ such that $\div \vec \psi =0$.
Keeping in mind the assumptions on the initial data and due to the fact that $(\vrho_\veps \ue )_\veps$ is uniformly bounded in e.g. $L^\infty_T(L^2) $ and so is $(\vrho \ue \otimes \ue)_\veps$ in $L^\infty_T(L^1)$, it follows that all the terms in equation \eqref{weak-mom}, apart from the Coriolis operator, go to $0$ in the limit for $\veps\rightarrow 0$.

Therefore, 
we infer that, for any $\vec \psi \in C_c^\infty([0,T[\, \times \R^2;\R^2)$ such that $\div \vec \psi =0$,
\begin{equation*}
\lim_{\veps \rightarrow 0}\int_0^T\int_{\R^2}\vrho_\veps \ue^\perp \cdot \vec \psi \dx \dt=\int_0^T \int_{\R^2}\vec u^\perp \cdot \vec \psi \dx \dt=0\, .
\end{equation*}
This property tells us that $\vec u^\perp =\nabla \pi$, for some suitable function $\pi$. 

However, this relation does \textit{not} add more information on the limit dynamics, since we already know that the divergence-free condition $\div \ue=0$ is satisfied for all $\veps>0$. 


\subsection{Wave system and convergence}\label{ss:wave_system}
The goal of the present subsection is to describe oscillations of solutions to show convergence to the limit system. The Coriolis term is responsible for fast oscillations in time of solutions, which may prevent the convergence. To overcome this issue we implement a strategy based on \textit{compensated compactness} arguments. Namely, we perform algebraic manipulations on the wave system (see \eqref{wave system} below), in order to derive 
compactness properties for the quantity $\gamma_\veps:=\curl (\vrho_\veps \ue)$. This will be enough to pass to the limit in the momentum equation (and, in particular, in the convective term). 

Let us define 
\begin{equation*}
\vec{V}_\veps:=\vrho_\veps \ue 
\, ,
\end{equation*}
that is uniformly bounded in $L^\infty_T(H^s)$, due to Proposition \ref{prop:app_fine_tame_est} in the Appendix. 

Now, using the fact that $\vrho_\veps=1+\veps R_\veps$, we recast the continuity equation in the following way:
\begin{equation}\label{eq:wave_mass}
\veps\d_t R_\veps+\div \vec V_\veps=0\, .
\end{equation}
In light of the uniform bounds and convergence properties stated in Lemma \ref{l:reg_Ru_veps}, we can easily pass to the limit in the previous formulation (or rather in \eqref{eq_transport_R_veps}) finding  
\begin{equation}\label{eq:limit_transp_R}
\d_t R+\div(R \vec u)=0\, .
\end{equation}
At this point, we decompose
$$ \vrho_\veps \ue^\perp=\ue^\perp +\veps\, R_\veps \ue^\perp $$
and from the momentum equation one can deduce
\begin{equation}\label{eq:wave_mom}
\veps \d_t \vec V_\veps +\nabla \Pi_\veps +\ue^\perp= \veps f_\veps
\end{equation}
where we have defined 
\begin{equation}\label{def_f}
f_\veps:=-\div (\vrho_\veps \ue \otimes \ue)-R_\veps \ue^\perp\, .
\end{equation}

In this way, we can rewrite system \eqref{Euler_eps} in the wave form
\begin{equation}\label{wave system}
\begin{cases}
\veps\d_t R_\veps+\div \vec V_\veps=0\\
\veps \d_t \vec V_\veps +\nabla \Pi_\veps +\ue^\perp= \veps f_\veps\, .
\end{cases}
\end{equation}

Applying again Proposition \ref{prop:app_fine_tame_est}, one can show that the terms $\vrho_\veps \ue \otimes \ue$ and $R_\veps \ue^\perp$ are uniformly bounded in $L^\infty_T(H^s)$. Thus, it follows that $(f_\veps)_\veps \subset L^\infty_T(H^{s-1})$.


However, the uniform bounds in Section \ref{s:well-posedness_original_problem} are not enough for proving convergence in the weak formulation of the momentum equation. Indeed, on the one hand, those controls allow to pass to the limit in the $\d_t$ term and in the initial datum; on the other hand, the non-linear term and the Coriolis force are out of control. We postpone the convergence analysis of the Coriolis force in the next Paragraph \ref{ss:limit} and now we focus on the 
the convective term $\div (\vrho_\veps \ue \otimes \ue)$ in \eqref{weak-mom}. We proceed as follows: first of all, we reduce our study to the constant density case (see Lemma \ref{l:approx_convective_term} below). Next, we apply the \textit{compensated compactness} argument.

\begin{lemma}\label{l:approx_convective_term}
Let $T>0$. For any test function $\vec \psi \in C_c^\infty([0,T[\times \R^2;\R^2)$, we get 
\begin{equation}\label{eq_approx_convective_term}
\limsup_{\veps\rightarrow 0}\left|\int_0^T \int_{\R^2}\vrho_\veps \ue\otimes \ue :\nabla \vec \psi \, \dxdt- \int_0^T\int_{\R^2} \ue \otimes \ue :\nabla \vec \psi \, \dxdt\right|=0\, .
\end{equation}
\end{lemma}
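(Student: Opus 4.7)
The plan is straightforward: exploit the fact that $\vrho_\veps = 1 + \veps R_\veps$ so that the difference in \eqref{eq_approx_convective_term} carries an explicit factor of $\veps$, and then use the uniform bounds from Section \ref{s:well-posedness_original_problem} to conclude.

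More precisely, I would write
\begin{equation*}
\int_0^T \int_{\R^2}\vrho_\veps \ue\otimes \ue :\nabla \vec \psi \, \dxdt- \int_0^T\int_{\R^2} \ue \otimes \ue :\nabla \vec \psi \, \dxdt \,=\, \veps\int_0^T \int_{\R^2} R_\veps \, \ue\otimes \ue :\nabla \vec \psi \, \dxdt\,,
\end{equation*}
and estimate the right-hand side by H\"older's inequality as
\begin{equation*}
\veps\,\bigl\|R_\veps\bigr\|_{L^\infty_T(L^\infty)} \bigl\|\ue\otimes\ue\bigr\|_{L^\infty_T(L^2)} \bigl\|\nabla\vec\psi\bigr\|_{L^1_T(L^2)}\,.
\end{equation*}
The first factor is uniformly bounded by \eqref{hyp:ill_data_R_0} and the propagation of the $L^\infty$ bound along the transport equation (see \eqref{eq:transport_density}). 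The second factor is uniformly bounded: indeed since $s>2$, by Sobolev embedding $H^s \hookrightarrow L^\infty$, so $\ue$ is uniformly bounded in $L^\infty_T(L^\infty\cap L^2)$, and the tame estimates of Proposition \ref{prop:app_fine_tame_est} (or simply $\|\ue\otimes\ue\|_{L^2}\leq \|\ue\|_{L^\infty}\|\ue\|_{L^2}$) give the $L^\infty_T(L^2)$ control. The last factor is finite because $\vec\psi$ has compact support in $[0,T[\,\times\R^2$.

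Hence the entire expression is of order $O(\veps)$, and taking $\veps\to 0$ yields \eqref{eq_approx_convective_term}. There is essentially no obstacle here: the lemma is really a ``consistency'' statement allowing the author to replace the density-weighted convective term by the homogeneous one, and the smallness is purely quantitative, coming from the scaling $\vrho_\veps = 1+\veps R_\veps$ of the slightly non-homogeneous regime. The only minor point to be careful about is to pick norms compatible with the fact that $R_\veps$ is only in $L^\infty$ (not $L^2$), which is why one pairs $R_\veps$ with $L^\infty$, $\ue\otimes\ue$ with $L^2$, and uses the compact support of $\nabla\vec\psi$ to place it in $L^1_T(L^2)$.
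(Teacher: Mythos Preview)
Your proof is correct and follows exactly the same approach as the paper: write $\vrho_\veps = 1+\veps R_\veps$, so that the difference carries a factor $\veps$, and use the uniform bounds $(R_\veps)_\veps\subset L^\infty_T(L^\infty)$ and $(\ue)_\veps\subset L^\infty_T(H^s)$ to conclude that the remainder is $O(\veps)$. Your write-up is in fact slightly more explicit about the choice of norms in the H\"older inequality than the paper's own proof.
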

\begin{proof}
Let $\psi \in C_c^\infty([0,T[\times \R^2;\R^2)$ with $\Supp \vec \psi \subset [0,T]\times K$ for some compact $K\subset \R^2$. Therefore, we can write
\begin{equation*}
\int_0^T \int_{K}\vrho_\veps \ue\otimes \ue :\nabla \vec \psi \, \dxdt= \int_0^T\int_{K} \ue \otimes \ue :\nabla \vec \psi \, \dxdt+\veps \int_0^T\int_{K} R_\veps \ue \otimes \ue :\nabla \vec \psi \, \dxdt\, .
\end{equation*}
As a consequence of the uniform bounds e.g. $(\ue)_\veps \subset L^\infty_T(H^s)$ and $(R_\veps)_\veps\subset L_T^\infty(L^\infty)$, the second integral in the right-hand side is of order $\veps$.
\end{proof}

Thanks to Lemma \ref{l:approx_convective_term}, we are reduced to study the convergence (with respect to $\veps$) of the integral
\begin{equation*}
-\int_0^T\int_{\R^2} \ue \otimes \ue :\nabla \vec \psi \, \dxdt=\int_0^T\int_{\R^2} \div(\ue \otimes \ue) \cdot \vec \psi \, \dxdt\, .
\end{equation*}
Owing to the divergence-free condition we can write:
\begin{equation}\label{eq:conv_term_rel}
\div (\ue \otimes \ue )=\ue \cdot \nabla \ue =\frac{1}{2}\nabla |\ue|^2+\omega_\veps \, \ue^\perp\, ,
\end{equation}
where we have denoted $\omega_\veps:=\curl \ue$.

Notice that the former term, since it is a perfect gradient, vanishes identically when tested against $\vec \psi$ such that $\div \vec \psi=0$. 
As for the latter term we take advantage of equation \eqref{eq:wave_mom}. Taking the $\curl$ we get
\begin{equation}\label{relation_gamma}
\d_t  \gamma_\veps=\curl f_\veps \, ,
\end{equation}
where we have set $\gamma_\veps:= \curl \vec V_\veps$ with $\vec V_\veps:= \vrho_\veps \ue$. We recall also that $f_\veps$ defined in \eqref{def_f} is uniformly bounded in the space $L^\infty_T(H^{s-1})$.
Then, relation \eqref{relation_gamma} implies that the family $(\d_t \gamma_\veps)_\veps$ is uniformly bounded in $L^\infty_T(H^{s-2})$. As a result, we get $(\gamma_\veps)_\veps \subset W^{1,\infty}_T(H^{s-2})$. On the other hand, the sequence $(\nabla \gamma_\veps) _\veps$ is also uniformly bounded in $L^\infty_T(H^{s-2})$.
At this point, the Ascoli-Arzel\`a theorem gives compactness of $(\gamma_\veps)_\veps$ in e.g. $C^0_T(H^{s-2}_{\rm loc})$. Then, it converges (up to extracting a subsequence) to a tempered distribution $\gamma$ in the same space. Thus, it follows that 
\begin{equation*}
\gamma_\veps \longrightarrow \gamma \quad \text{in}\quad \mathcal{D}^\prime (\R_+\times \R^2)\, .
\end{equation*}
But since we already know the convergence $\vec{V}_\veps:=\vrho_\veps \ue \weakstar \vec u$ in e.g. $L^\infty_T(L^2)$, it follows that $\gamma_\veps:= \curl \vec V_\veps \weak \omega:=\curl \vec u$ in $\mc D^\prime$, hence $\gamma =\curl \vec u=\omega$.

Finally, writing $\vrho_\veps=1+\veps R_\veps$, we obtain 
$$ \gamma_\veps:=\curl (\vrho_\veps \ue)=\omega_\veps+\veps \curl (R_\veps \ue)\, , $$
where the family $(\curl (R_\veps \ue))_\veps$ is uniformly bounded in $L^\infty_T(H^{s-1})$. From this relation and the previous analysis, we deduce the strong convergence (up to an extraction) for $\veps\rightarrow 0$:
\begin{equation*}
\omega_\veps\longrightarrow \omega \qquad \text{ in }\qquad L^\infty_T(H^{s-2}_{\rm loc})\,. 
\end{equation*}


In the end, we have proved the following convergence result for the convective term $\div (\vec u_\veps \otimes \ue)$. 
\begin{lemma}\label{lemma:limit_convective_term}
Let $T>0$. Up to passing to a suitable subsequence, one has the following convergence for $\veps\rightarrow 0$:
\begin{equation}\label{limit_convective_term}
\int_0^T\int_{\R^2} \ue \otimes \ue :\nabla \vec \psi \, \dxdt\longrightarrow \int_0^T\int_{\R^2} \omega\, \vu^\perp \cdot \vec \psi\, \dxdt\, ,
\end{equation}
for any test function $\vec \psi \in C_c^\infty([0,T[\times \R^2;\R^2)$ such that $\div \vec \psi=0$.
\end{lemma}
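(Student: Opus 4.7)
\textbf{Proof proposal for Lemma \ref{lemma:limit_convective_term}.}
The plan is to exploit the decomposition \eqref{eq:conv_term_rel} together with the divergence-free condition on the test function, and then to combine the strong compactness of $(\omega_\ep)_\ep$ (already established via the wave system analysis) with the weak convergence of $(\ue)_\ep$.

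First I would integrate by parts in the left-hand side of \eqref{limit_convective_term}: since $\vec\psi$ has compact support in $[0,T[\,\times\R^2$, one obtains
\[
\int_0^T\!\!\int_{\R^2}\ue\otimes\ue:\nabla\vec\psi\,\dxdt \;=\; -\int_0^T\!\!\int_{\R^2}\div(\ue\otimes\ue)\cdot\vec\psi\,\dxdt.
\]
Using $\div\ue=0$ and the elementary identity already written in \eqref{eq:conv_term_rel}, one has $\div(\ue\otimes\ue)=\tfrac12\nabla|\ue|^2+\omega_\ep\,\ue^\perp$. Since $\div\vec\psi=0$, the perfect gradient term vanishes: $\int \nabla|\ue|^2\cdot\vec\psi\,\dx = -\int|\ue|^2\div\vec\psi\,\dx = 0$. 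Therefore the problem reduces to showing
\[
\int_0^T\!\!\int_{\R^2}\omega_\ep\,\ue^\perp\cdot\vec\psi\,\dxdt \;\longrightarrow\; \int_0^T\!\!\int_{\R^2}\omega\,\vec u^\perp\cdot\vec\psi\,\dxdt.
\]

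Next, I would pass to the limit in this bilinear quantity by combining the two convergences already at hand. On the one hand, the compensated compactness argument carried out on the wave system \eqref{wave system} yields, up to an extraction, the strong convergence $\omega_\ep\to\omega$ in $L^\infty_T(H^{s-2}_{\mathrm{loc}})$. On the other hand, the uniform bound $(\ue)_\ep\subset L^\infty_T(H^s)$ gives $\ue^\perp\weakstar \vec u^\perp$ in that space, and in particular in $L^\infty_T(H^{s-2}_{\mathrm{loc}})$ endowed with its weak-$\ast$ topology (or, equivalently, weakly in $L^2_T(H^{s-2}_{\mathrm{loc}})$ on the compact support $K$ of $\vec\psi$). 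Restricting everything to $[0,T]\times K$, one has the product of a strongly convergent sequence and a weakly convergent sequence in dual spaces: since $s-2>0$, the duality $H^{s-2}_{\mathrm{loc}}\times H^{-(s-2)}_{\mathrm{comp}}$ (or, more simply, the embedding $L^2\times L^2$ after localisation and the fact that $\vec\psi$ is a bounded multiplier) allows one to conclude
\[
\omega_\ep\,\ue^\perp \;\longrightarrow\; \omega\,\vec u^\perp \qquad\text{in }\mc D^\prime(]0,T[\,\times\R^2).
\]
Testing against $\vec\psi$ yields the desired convergence.

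The only delicate point in this scheme is the justification of the product convergence: weak times strong in $H^{s-2}_{\mathrm{loc}}$ is not automatic when $s-2$ is close to $0$. To avoid any subtlety one can argue directly in $L^2_{\mathrm{loc}}$: the uniform bound $(\omega_\ep)_\ep\subset L^\infty_T(H^{s-1})$ together with Rellich's theorem promotes the strong convergence of $\omega_\ep$ from $H^{s-2}_{\mathrm{loc}}$ to $L^2_{\mathrm{loc}}$ (in fact to any $H^\sigma_{\mathrm{loc}}$ with $\sigma<s-1$, in particular to $L^2_{\mathrm{loc}}$ by interpolation), and $(\ue)_\ep\subset L^\infty_T(L^2)$ gives $\ue^\perp\weakstar \vec u^\perp$ in $L^\infty_T(L^2)$, so that the product converges in $\mc D^\prime$ by the standard weak$\times$strong lemma. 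Once this is secured, the limit identity \eqref{limit_convective_term} follows, and together with Lemma \ref{l:approx_convective_term} it provides the full convergence of the convective term in the weak formulation \eqref{weak-mom}.
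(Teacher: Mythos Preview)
Your proof is correct and follows essentially the same route as the paper: integrate by parts, use the identity \eqref{eq:conv_term_rel} to split off the gradient term (which vanishes against divergence-free $\vec\psi$), and then pass to the limit in $\int\omega_\ep\,\ue^\perp\cdot\vec\psi$ by combining the strong convergence $\omega_\ep\to\omega$ in $L^\infty_T(H^{s-2}_{\rm loc})$ (coming from the wave system analysis via Ascoli--Arzel\`a) with the weak-$\ast$ convergence of $\ue$. You are in fact more explicit than the paper about the final weak$\times$strong product step; note only that your ``delicate point'' discussion is slightly overcautious, since here $s>2$ forces $s-2>0$, so strong convergence in $H^{s-2}_{\rm loc}$ already implies strong convergence in $L^2_{\rm loc}$ directly, and neither Rellich nor interpolation is actually needed.
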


\subsection{The limit system} \label{ss:limit} 
With the convergence established in Paragraph \ref{ss:wave_system}, we can pass to the limit in the momentum equation.

To begin with, we take a test-function $\vec\psi$ such that 
\begin{equation}\label{def:test_function}
\vec \psi =\nabla^\perp \varphi\quad \quad  \text{with}\quad \quad
 \varphi \in C_c^\infty ([0,T[\times \R^2;\R)\, .
\end{equation}

For such a $\vec\psi$, all the gradient terms vanish identically. First of all, we recall the momentum equation in its weak formulation:
\begin{equation}\label{weak_mom_limit}
\int_0^T\!\!\!\int_{\R^2}  
	\left( - \vre \ue \cdot \partial_t \vec\psi - \vre [\ue\otimes\ue]  : \nabla_x \vec\psi 
	+ \frac{1}{\ep} \,  \vre \ue^\perp \cdot \vec\psi  \right) \dxdt 
	= \int_{\R^2}\vrez \uez  \cdot \vec\psi (0,\cdot) \dx\, .
\end{equation}
Making use of the uniform bounds, we can pass to the limit in the $\d_t$ term and thanks to our assumptions and embeddings we have $\vrho_{0,\veps}\vu_{0,\veps}\weak \vu_0$ in e.g. $L_{\rm loc}^{2}$.

For the convective term $\vrho_\veps \ue \otimes \ue$, taking advantage of the analysis presented in Subsection \ref{ss:wave_system} and performing equalities \eqref{eq:conv_term_rel} backwards, we find that 
\begin{equation*}
\int_0^T \int_{\R^2}\vrho_\veps \ue\otimes \ue :\nabla \vec \psi \, \dxdt \longrightarrow \int_0^T\int_{\R^2} \vu \otimes \vu :\nabla \vec \psi \, \dxdt
\end{equation*}
for $\veps \rightarrow 0$ and for all smooth divergence-free test functions $\vec \psi$. 

Let us consider now the Coriolis term. We can write:
\begin{equation*}
\int_0^T\int_{\R^2}\frac{1}{\veps}\vrho_\veps \ue^\perp \cdot \vec \psi\, \dxdt=\int_0^T\int_{\R^2}R_\veps \ue^\perp \cdot \vec \psi\, \dxdt+\int_0^T\int_{\R^2}\frac{1}{\veps} \ue^\perp \cdot \vec \psi\, \dxdt\, .
\end{equation*}
Since $\ue$ is divergence-free, the latter term vanishes when tested against such $\vec \psi$ defined in \eqref{def:test_function}. On the other hand, again thanks to Lemma \ref{l:reg_Ru_veps}, one can get 
 \begin{equation*}
 \int_0^T\int_{\R^2}R_\veps \ue^\perp \cdot \vec \psi\, \dxdt\longrightarrow \int_0^T\int_{\R^2}R \vu^\perp \cdot \vec \psi\, \dxdt\, .
 \end{equation*}
In the end, letting $\veps \rightarrow 0$ in \eqref{weak_mom_limit}, we gather   
\begin{equation*}
\int_0^T\!\!\!\int_{\R^2}  
	\left( -  \vu \cdot \partial_t \vec\psi -  \vu \otimes\vu  : \nabla_x \vec\psi 
	+  \,  R \vu^\perp \cdot \vec\psi  \right) \dxdt 
	= \int_{\R^2} \vu_0  \cdot \vec\psi (0,\cdot) \dx
\end{equation*}
for any test function $\vec \psi$ defined as in \eqref{def:test_function}. 

From this relation, we immediately obtain that 
\begin{equation*}
\d_t \vu +\div (\vu \otimes \vu) +R\vu^\perp +\nabla \Pi=0
\end{equation*}
for a suitable pressure term $\nabla \Pi$. This term appears as a result of the weak formulation of the problem. It can be viewed as a Lagrangian multiplier associated to the to the divergence-free constraint on $\vu$. Finally, the quantity $R$ satisfies the transport equation found in \eqref{eq:limit_transp_R}. 

We conclude this paragraph, devoting our attention to the analysis of the regularity of $\nabla \Pi$. We apply the $\div$ operator to the momentum equation in \eqref{QH-Euler_syst}, deducing that $\Pi$  satisfies 
\begin{equation}\label{time_reg_pressure}
-\Delta \Pi= \div G \qquad \text{where}\qquad G:= \vec u \cdot \nabla \vec u+R\vec u^\perp\, .
\end{equation}
On the one hand, 
Lemma 2 of \cite{D_JDE} gives 
\begin{equation*}
\|\nabla \Pi\|_{L^2}\leq C\|G\|_{L^2}\leq C\left(\|\vec u\|_{L^2}\|\nabla \vec u\|_{L^\infty}+\|R\|_{L^\infty}\|\vec u^\perp\|_{L^2}\right)\, .
\end{equation*}
This implies that $\nabla \Pi\in L^\infty_T(L^2)$.

On the other hand, owing to the divergence-free condition on $\vec u$, we have 
\begin{equation*}
\|\Delta \Pi\|_{H^{s-1}}\leq C\left(\|\vec u\|^2_{H^s}+\|R\|_{L^\infty}\|u\|_{H^s}+\|\nabla R\|_{H^{s-1}}\|\vec u\|_{L^\infty}\right)\, ,
\end{equation*}
where we have also used Proposition \ref{prop:app_fine_tame_est}.

In the end, we deduce that $\Delta \Pi \in L^\infty_T(H^{s-1})$. Thus, we conclude that $\nabla \Pi\in L^\infty_T(H^s)$.

At this point, employing classical results on solutions to transport equations in Sobolev spaces, we may infer the claimed $C^0$ time regularity of $\vec u$ and $R$. Moreover, thanks to the fact that $R$ and $\vec u$ are both continuous in time, from the elliptic equation \eqref{time_reg_pressure}, we get that also $\nabla \Pi \in C^0_T(H^s)$.

\section{Well-posedness for the quasi-homogeneous system}\label{s:well-posedness_Q-H}
In this section, for the reader's convenience, we review the well-posedness theory of the quasi-homogeneous Euler system \eqref{system_Q-H_thm}, in particular, the ``asymptotically global'' well-posedness result presented in \cite{C-F_sub}. In the first paragraph, we recall the local well-posedness theorem for system \eqref{system_Q-H_thm} in the $H^s$ framework. Actually, equations \eqref{system_Q-H_thm} are locally well-posedness in all $B^s_{p,r}$ Besov spaces, under the condition \eqref{Lip_assumption}. We refer to \cite{C-F_RWA} where the authors apply the standard Littlewood-Paley machinery to the quasi-homogeneous ideal MHD system to recover local in time well-posedness in spaces $B^s_{p,r}$ for any $1<p<+\infty$. The case $p=+\infty$ was reached in \cite{C-F_sub} with a different approach based on the vorticity formulation of the momentum equation.

In Subsection \ref{ss:improved_lifespan}, we explicitly derive the lower bound for the lifespan of solutions to system \eqref{system_Q-H_thm}. The reason in detailing the derivation of \eqref{T-ast:improved} for $T^\ast$ is due to the fact that it is much simpler than the one presented in \cite{C-F_sub}, where (due to the presence of the magnetic field) the lifespan behaves like the fifth iterated logarithm of the norms of the initial oscillation $R_0$ and the initial magnetic field. In addition, that lower bound (see \eqref{T-ast:improved} below) improves the standard lower bound coming from the hyperbolic theory, where the lifespan is bounded from below by the inverse of the norm of the initial data. 


\subsection{Local well-posedness in $H^s$ spaces}
In this subsection, we state the well-posedness result for system \eqref{system_Q-H_thm} in the $H^s$ functional framework with $s>2$, in which we have analysed the well-posedness issue for system \eqref{Euler_eps}. 
\begin{theorem}
Take $s>2$. Let $\big(R_0,u_0 \big)$ be initial data such that $R_0\in L^{\infty}$, with $\nabla R_0\in H^{s-1}$, and
the divergence-free vector filed $\vu_0 \,\in  H^s$.

Then, there exists a time $T^\ast > 0$ such that, on $[0,T^\ast]\times\R^2$, problem \eqref{system_Q-H_thm} has a unique solution $(R,\vu, \nabla \Pi)$ with:
\begin{itemize}
 \item $R\in C^0\big([0,T^\ast]\times \R^2\big)$ and $\nabla R\in C^0_{T^\ast}(H^{s-1}(\R^2))$;
 \item $\vu$ and $\nabla \Pi$ belong to $C^0_{T^\ast}( H^{s}(\R^2))$.
 \end{itemize}
In addition, if $T^\ast<T<+\infty$ and we assume that 
\begin{equation*}
\int_0^{T}  \big\| \nabla \vu(t) \big\|_{L^\infty}   \dt < +\infty\,, 
\end{equation*}
then the triplet $(R, \vu, \nabla \Pi)$ can be continued beyond $T$ into a solution of system \eqref{system_Q-H_thm} with the same regularity.
\end{theorem}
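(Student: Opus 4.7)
The plan is to adapt the well-posedness strategy developed in Section \ref{s:well-posedness_original_problem} for the primitive system \eqref{Euler-a_eps_1} to the target system \eqref{system_Q-H_thm}. The argument is considerably simpler for two reasons: there is no singular parameter to track, and the pressure now solves a \emph{constant-coefficient} Poisson problem
\[
-\Delta \Pi = \div(\vu \cdot \nabla \vu) + \div(R\vu^\perp)
\]
instead of the variable-coefficient elliptic equation \eqref{eq:elliptic_problem} that forced the delicate analysis of Subsection \ref{ss:unif_est}. The only genuinely new term in the momentum equation is the lower-order coupling $R\vu^\perp$, which is tame thanks to Proposition \ref{prop:app_fine_tame_est} since $H^{s-1}$ is an algebra for $s>2$.

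For the local existence part, I would construct approximate solutions via a Friedrichs-type iteration scheme entirely parallel to Subsection 3.1: starting from mollified data $S_n(R_0,\vu_0)$, define inductively $R^{n+1}$ from the linear transport equation $\d_t R^{n+1}+\vu^n\cdot\nabla R^{n+1}=0$ and $\vu^{n+1}$ from the linearised momentum equation $\d_t\vu^{n+1}+\vu^n\cdot\nabla\vu^{n+1}+R^{n+1}\vu^{\perp,n+1}+\nabla\Pi^{n+1}=0$ under $\div\vu^{n+1}=0$, with $\nabla\Pi^{n+1}$ uniquely determined by the Poisson equation above. Uniform estimates are then obtained as in Subsection \ref{ss:unif_est}: the $L^\infty$ norm of $R^{n+1}$ is propagated by the divergence-free transport, the $H^{s-1}$ norm of $\nabla R^{n+1}$ grows at most exponentially in $\int_0^t\|\vu^n\|_{H^s}$ via commutator estimates (Lemma \ref{l:commutator_est}), and the $H^s$ norm of $\vu^{n+1}$ is controlled by the same Littlewood-Paley energy method, using the tame bound $\|R^{n+1}\vu^{\perp,n+1}\|_{H^s}\leq C(\|R^{n+1}\|_{L^\infty}+\|\nabla R^{n+1}\|_{H^{s-1}})\|\vu^{n+1}\|_{H^s}$ from Proposition \ref{prop:app_fine_tame_est} and the standard elliptic estimate for $\nabla\Pi^{n+1}$. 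An induction argument of the type \eqref{eq:unifbounds} then produces a common existence time $T^\ast>0$ depending only on the size of the data. Convergence of the scheme in $C^0_{T^\ast}(L^2)$, together with interpolation against the uniform $H^s$ bounds, proceeds exactly as in Paragraph \ref{ss:conv_H^s} and delivers the claimed regularity after recovering time continuity from the equations themselves. Uniqueness follows from the stability inequality of Theorem \ref{th:uniq} (or an analogous direct energy argument on the difference of two solutions), which applies thanks to the strong regularity of the coefficients.

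The continuation criterion is the most delicate point, although its proof follows a classical Beale--Kato--Majda scheme. Suppose $(R,\vu,\nabla\Pi)$ exists on $[0,T^\ast[$ with $T^\ast<T<+\infty$ and set $V:=\int_0^T\|\nabla\vu(t)\|_{L^\infty}\dt<+\infty$. Differentiating the mass equation and applying Theorem \ref{thm_transport} to $\nabla R$, the divergence-free condition on $\vu$ gives
\[
\|\nabla R(t)\|_{H^{s-1}}\leq \|\nabla R_0\|_{H^{s-1}}\exp(CV),\qquad t\in[0,T^\ast[\,,
\]
while $\|R(t)\|_{L^\infty}\leq\|R_0\|_{L^\infty}$ by the maximum principle. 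Applying Theorem \ref{thm_transport} to the velocity equation, combined with $\|R\vu^\perp\|_{H^s}\leq C(1+\|\nabla R\|_{H^{s-1}})\|\vu\|_{H^s}$ and the elliptic bound $\|\nabla\Pi\|_{H^s}\leq C(1+\|\nabla R\|_{H^{s-1}})\|\vu\|_{H^s}^2+\mbox{l.o.t.}$, a Gronwall argument driven by $\|\nabla\vu\|_{L^\infty}$ shows that $\|\vu(t)\|_{H^s}$ stays bounded as $t\to T^\ast$. Since none of the relevant norms blow up, the local existence result can be reapplied at some $t_0<T^\ast$ sufficiently close to $T^\ast$ to extend the solution strictly past $T^\ast$, and iterating this procedure reaches any time $t<T$. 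The main obstacle is to ensure that the $H^s$ energy estimate for $\vu$ closes with $\|\nabla\vu\|_{L^\infty}$ (and not the full $\|\vu\|_{H^s}$) as the multiplicative growth factor in the Gronwall inequality; this is precisely the content of the refined transport estimates of Theorem \ref{thm_transport} applied to a divergence-free drift.
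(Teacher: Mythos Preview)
The paper does not prove this theorem: it is stated as a review of results from \cite{C-F_RWA} and \cite{C-F_sub}, so there is no ``paper's own proof'' to compare against. Your outline for the local existence and uniqueness part is correct and is exactly the simplification of Section~\ref{s:well-posedness_original_problem} one expects (constant-coefficient pressure, no singular parameter, the lower-order coupling $R\vu^\perp$ handled by Proposition~\ref{prop:app_fine_tame_est}).

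There is, however, a genuine gap in your continuation argument. You claim that Theorem~\ref{thm_transport} applied to $\nabla R$ yields
\[
\|\nabla R(t)\|_{H^{s-1}}\leq \|\nabla R_0\|_{H^{s-1}}\exp(CV),\qquad V=\int_0^T\|\nabla\vu\|_{L^\infty}\,\dt,
\]
but Theorem~\ref{thm_transport} as stated carries $\int_0^t\|\nabla\vu\|_{H^{s-1}}$ in the exponential, not $V$; the ``refined'' version with $\|\nabla\vu\|_{L^\infty}$ only holds for regularity indices \emph{below} $1+d/p$, whereas here $s-1>1=d/2$. The same issue contaminates the $H^s$ estimate for $\vu$: the commutator bound (Lemma~\ref{l:commutator_est}) produces a term $\|\nabla\vu\|_{H^{s-1}}\|\nabla R\|_{L^\infty}$ in the $\nabla R$ estimate and, via $\|R\vu^\perp\|_{H^s}$, a term $\|\nabla R\|_{H^{s-1}}\|\vu\|_{L^\infty}$ in the $\vu$ estimate. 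Bounding $\|\vu\|_{L^\infty}\leq\|\vu\|_{H^s}$ and $\|\nabla R\|_{L^\infty}\leq\|\nabla R\|_{H^{s-1}}$ then gives a \emph{quadratic} differential inequality for $\|\vu\|_{H^s}+\|\nabla R\|_{H^{s-1}}$, which does not close by Gr\"onwall in terms of $V$ alone.

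The standard repair is to first obtain an a priori bound on $\|\vu\|_{L^\infty}$ independent of the $H^s$ norm. One route: the vorticity satisfies $\d_t\omega+\vu\cdot\nabla\omega=-\vu\cdot\nabla R$, so $\|\omega\|_{L^2}$ and then $\|\omega\|_{L^q}$ (any $q>2$) stay bounded because $\|\nabla R\|_{L^\infty}\leq\|\nabla R_0\|_{L^\infty}e^{CV}$ and $\|\vu\|_{L^q}\leq C(\|\vu\|_{L^2}+\|\omega\|_{L^2})$; Biot--Savart then gives $\|\vu\|_{L^\infty}\leq C(\|\vu\|_{L^2}+\|\omega\|_{L^q})\leq M(V,T,\text{data})$. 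With $\|\vu\|_{L^\infty}$ and $\|\nabla R\|_{L^\infty}$ now bounded by fixed constants, the coupled $H^s\times H^{s-1}$ estimate becomes genuinely linear and Gr\"onwall closes. You correctly flagged the closure issue as the ``main obstacle'', but attributing its resolution to Theorem~\ref{thm_transport} is not right; the missing step is this separate low-norm control of $\vu$.
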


\subsection{Well-posedness in Besov spaces} \label{ss:W-P_Besov}
The main goal of this subsection is to review the lifespan estimate presented in \cite{C-F_sub} (for the MHD system) in order to get \eqref{improved_low_bound}. To show that, one has to work in critical Besov spaces where one can take advantage of the improved estimates for linear transport equations \textit{à la} Hmidi-Keraani-Vishik. In order to ensure that the condition \eqref{Lip_assumption} is satisfied, the lowest regularity space we can reach is $B^1_{\infty,1}$. In addition, since $\vec u\in B^1_{\infty,1}$, we have that the $B^0_{\infty,1}$ norm of the $\curl \vu$ can be bounded \textit{linearly} with respect to $\|\nabla \vu\|_{L^1_t(L^\infty)}$, instead of exponentially as in classical $B^s_{p,r}$ estimates (see Theorem \ref{thm:improved_est_transport} in the Appendix).

Finally, we construct a ``bridge'' between $H^s$ and $B^1_{\infty,1}$ Besov spaces establishing a continuation criterion, in the spirit of the one by Beale-Kato-Majda in \cite{B-K-M}.

We start by recalling the local well-posedness result for system \eqref{QH-Euler_syst} in $B^s_{\infty,r}$ and, in particular, in the end-point space $B^1_{\infty,1}$. We highlight that the physically relevant $L^2$ condition on $\vec u$, in the following theorem, is necessary to control the low frequency part of the solution, so as to reconstruct the velocity from its $\curl$ (see Lemma \ref{l:rel_curl} below).
\begin{theorem}\label{thm:W-P_besov_spaces_p-infty}
Let $(s,r)\in \R\times [1,+\infty]$ such that $s>1$ or $s=r=1$. Let $(R_0, \vec u_0)$ be an initial datum such that $R_0\in B^s_{\infty, r}(\R^2)$ and the divergence-free vector field $\vec u_0\in L^2(\R^2)\cap B^s_{\infty,r}(\R^2)$. Then, there exists a time $T^\ast>0$ such that system \eqref{QH-Euler_syst} has a unique solution $(R, \vec u)$ with the following regularity properties, if $r<+\infty$:
\begin{itemize}
\item $R\in C^0([0,T^\ast];B^s_{\infty,r}(\R^2))\cap C^1([0,T^\ast];B^{s-1}_{\infty,r}(\R^2)) $;
\item $\vec u$ and $\nabla \Pi$ belong to $C^0([0,T^\ast];B^s_{\infty,r}(\R^2))\cap C^1([0,T^\ast];L^2(\R^2)\cap B^{s-1}_{\infty,r}(\R^2))$. 
\end{itemize}
 In the case when $r=+\infty$, we need to replace $C^0([0,T^\ast];B^s_{\infty,r}(\R^2))$ by the space $C_w^0([0,T^\ast];B^s_{\infty,r}(\R^2))$.
\end{theorem}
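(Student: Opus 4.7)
The plan is to adapt the classical Friedrichs-type scheme used for the $H^s$ well-posedness in Section \ref{s:well-posedness_original_problem} to the $B^s_{\infty,r}$ framework. First I would construct a sequence of smooth approximate solutions $(R^n, \vec u^n, \nabla\Pi^n)_{n\in\mathbb{N}}$ by setting $(R^0,\vec u^0)=(S_0 R_0, S_0\vec u_0)$, defining $R^{n+1}$ as the solution to the linear transport equation $\partial_t R^{n+1}+\vec u^n\cdot\nabla R^{n+1}=0$, and then determining $\vec u^{n+1}$ from the linear equation
\begin{equation*}
\partial_t \vec u^{n+1}+\vec u^n\cdot\nabla\vec u^{n+1}+R^{n+1}(\vec u^n)^\perp+\nabla\Pi^{n+1}=0,\qquad \div\vec u^{n+1}=0,
\end{equation*}
with $\Pi^{n+1}$ obtained by solving $-\Delta\Pi^{n+1}=\div\bigl(\vec u^n\cdot\nabla\vec u^{n+1}+R^{n+1}(\vec u^n)^\perp\bigr)$. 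Unlike in the primitive system, here the elliptic operator has constant coefficients, so standard Calder\'on--Zygmund theory (plus the $L^2$ control for low frequencies as in \eqref{est:Pi_L^2}) gives $\nabla\Pi^{n+1}\in L^2\cap B^s_{\infty,r}$ by tame product estimates in Besov spaces.

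Next I would derive uniform bounds on a time interval $[0,T^\ast]$ independent of $n$. For $s>1$ (and $r<+\infty$) the key ingredients are the classical transport estimate in $B^s_{\infty,r}$ (Theorem \ref{thm_transport}) and the tame product estimates (Proposition \ref{prop:app_fine_tame_est}), giving
\begin{equation*}
\|R^{n+1}(t)\|_{B^s_{\infty,r}}\leq \|R_0\|_{B^s_{\infty,r}}\exp\Bigl(C\!\int_0^t\!\|\nabla\vec u^n\|_{B^{s-1}_{\infty,r}\cap L^\infty}\,d\tau\Bigr)
\end{equation*}
and a symmetric bound for $\vec u^{n+1}$, with $\|\vec u^{n+1}\|_{L^2}$ propagated by the $L^2$ energy identity (using $\int R^{n+1}(\vec u^n)^\perp\!\cdot\vec u^{n+1}\neq 0$, but controlled by $\|R^{n+1}\|_{L^\infty}\|\vec u^n\|_{L^2}\|\vec u^{n+1}\|_{L^2}$). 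A standard bootstrap yields $T^\ast>0$ such that the sequence is bounded in $L^\infty_{T^\ast}(B^s_{\infty,r})$ and, for $\vec u^{n+1}$, also in $L^\infty_{T^\ast}(L^2)$. Convergence of the scheme is then obtained by showing, as in Subsection \ref{ss:conv_H^s}, that $(R^n,\vec u^n,\nabla\Pi^n)$ is Cauchy in a weaker norm such as $C^0_{T^\ast}(L^2)$; an interpolation argument upgrades this to strong convergence in every $C^0_{T^\ast}(B^\sigma_{\infty,r})$ with $\sigma<s$, enough to pass to the limit in the equations, while the Fatou property of Besov spaces delivers the $L^\infty_{T^\ast}(B^s_{\infty,r})$ regularity of the limit.

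Uniqueness would be proved by a stability estimate at the $L^2$ level, analogous to Theorem \ref{th:uniq}: subtracting two solutions, testing the difference of the momentum equations against $\delta\vec u$, and using that now the density coefficient is constant, one obtains a Gr\"onwall-type inequality controlled by $\|\nabla\vec u^{(1)}\|_{L^1_t(L^\infty)}$ and $\|R^{(1)}\|_{L^1_t(L^\infty)}$, both finite thanks to the embedding $B^s_{\infty,r}\hookrightarrow L^\infty$ under \eqref{Lip_assumption}. The time continuity in the strong topology for $r<+\infty$ follows by plugging the regularity back into the transport equations and using Theorem \ref{thm_transport}, while for $r=+\infty$ only weak continuity can be obtained, because in $B^s_{\infty,\infty}$ the high-frequency blocks do not decay uniformly.

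The main obstacle is the endpoint case $s=r=1$, where the condition \eqref{Lip_assumption} is saturated and the classical transport estimate would give exponential growth of the Lipschitz norm, preventing closure. Here the strategy is the one mentioned in Subsection \ref{ss:W-P_Besov}: pass to the vorticity formulation $\partial_t \omega+\vec u\cdot\nabla\omega=-\curl(R\vec u^\perp)$, estimate $\omega$ in $B^0_{\infty,1}$ using the Hmidi--Keraani--Vishik improved transport estimate (Theorem \ref{thm:improved_est_transport}), which is \emph{linear} in $\|\nabla\vec u\|_{L^1_t(L^\infty)}$, and reconstruct $\vec u\in B^1_{\infty,1}$ via the Biot--Savart law using the low-frequency $L^2$ control (Lemma \ref{l:rel_curl}). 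The density $R$ can be controlled in $B^1_{\infty,1}$ directly by the standard transport estimate, since its Lipschitz norm is not what feeds back into the velocity equation in a singular way. Closing the estimates then yields existence on a common interval, and the rest of the argument (convergence, uniqueness, continuity) proceeds as above.
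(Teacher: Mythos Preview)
The paper does not actually prove this theorem: it is merely \emph{recalled} from \cite{C-F_sub} (see the opening of Subsection~\ref{ss:W-P_Besov}), so there is no ``paper's own proof'' to match against. What the paper does tell us is that the standard Littlewood--Paley scheme of \cite{C-F_RWA} covers $1<p<+\infty$, whereas the case $p=+\infty$ stated here is obtained in \cite{C-F_sub} by a \emph{different} route based on the vorticity formulation of the momentum equation, uniformly in $s$.

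Your proposal is correct and would yield the result, but it organises the argument differently from the approach indicated by the paper. You run the Friedrichs-type scheme on the velocity equation directly for $s>1$ and reserve the vorticity formulation only for the endpoint $s=r=1$. This works: the constant-coefficient Leray projector $\nabla(-\Delta)^{-1}\div$ is bounded on $\Delta_j L^\infty$ for $j\geq 0$ by Fourier multiplier arguments, and the low-frequency block $\Delta_{-1}\nabla\Pi$ is handled through the $L^2$ control on $\vec u$ via Bernstein, exactly as you indicate. By contrast, the approach the paper points to in \cite{C-F_sub} passes to the $\omega$-equation from the outset for \emph{all} admissible $(s,r)$ when $p=\infty$: this kills the pressure term entirely ($\curl\nabla\Pi=0$), estimates $\omega$ in $B^{s-1}_{\infty,r}$, and reconstructs $\vec u$ in $L^2\cap B^s_{\infty,r}$ via the Biot--Savart relation of Lemma~\ref{l:rel_curl}. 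That strategy avoids the case distinction and any separate treatment of $\nabla\Pi$ at $p=\infty$, and it is the natural precursor to the lifespan analysis of Subsection~\ref{ss:improved_lifespan}; your approach stays closer to the $H^s$ argument of Section~\ref{s:well-posedness_original_problem} and is perhaps more transparent for the subcritical range $s>1$.

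One minor caution: your ``interpolation'' from $C^0_{T^\ast}(L^2)$ and uniform $L^\infty_{T^\ast}(B^s_{\infty,r})$ bounds to strong convergence in $C^0_{T^\ast}(B^\sigma_{\infty,r})$, $\sigma<s$, is not a literal interpolation inequality (the integrability indices differ), but the standard high/low frequency splitting together with Bernstein does the job; just make that step explicit.
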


Next, one can state the following continuation criterion for solutions of system \eqref{QH-Euler_syst} in $B^s_{\infty,r}$, where the couple $(s,r)$ satisfies the Lipschitz condition \eqref{Lip_assumption} (see \cite{C-F_sub} for details of the proof).
\begin{proposition}\label{prop:cont_criterion_Bes}
Let $(R_0,\vu_0)\in B^s_{\infty,r}\times (L^2\cap B^s_{\infty,r})$ with $\div \vu_0=0$. Given a time $T>0$, let $(R,\vu)$ be a solution of \eqref{QH-Euler_syst} on $[0,T[$ that belongs to $L^\infty_t(B^s_{\infty,r})\times L^\infty_t(L^2\cap B^s_{\infty,r})$ for any $t\in [0,T[$. If we assume that  
\begin{equation}\label{cont_cond_Bes}
\int_0^T\|\nabla \vu \|_{L^\infty}\, dt<+\infty\, ,
\end{equation} 
then $(R, \vu)$ can be continued beyond $T$ into a solution of \eqref{QH-Euler_syst} with the same regularity.


Moreover, the lifespan of a solution $(R,\vu)$ to system \eqref{system_Q-H_thm} does not depend on $(s,r)$ and, in particular, the lifespan of solutions in Theorem \ref{thm:well-posedness_Q-H-Euler} is the same as the lifespan in
$B^1_{\infty,1}\times \left( L^2 \cap B^1_{\infty,1}\right)$.
\end{proposition}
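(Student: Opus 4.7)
My plan is to establish the continuation criterion first, and then deduce the lifespan independence from $(s,r)$ as a consequence of it combined with the local well-posedness Theorem \ref{thm:W-P_besov_spaces_p-infty}.

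For the continuation criterion, the strategy is to show that, under the assumption $\int_0^T \|\nabla \vu\|_{L^\infty}\dt<+\infty$, all the relevant $B^s_{\infty,r}$ norms remain bounded on $[0,T[$, so that one can apply Theorem \ref{thm:W-P_besov_spaces_p-infty} at some $T'<T$ close enough to $T$ and glue the solutions. First, I treat $R$: since it solves a pure transport equation $\partial_t R+\vu\cdot\nabla R=0$ driven by a Lipschitz vector field, a standard transport estimate in $B^s_{\infty,r}$ (or the Hmidi--Keraani--Vishik improved estimate at the critical index $s=r=1$, via Theorem \ref{thm:improved_est_transport}) yields
\begin{equation*}
\|R(t)\|_{B^s_{\infty,r}}\,\leq\, \|R_0\|_{B^s_{\infty,r}}\,\exp\!\left(C\int_0^t\|\nabla\vu\|_{L^\infty}\,\detau\right)\, .
\end{equation*}
Next, I pass to the vorticity formulation: applying $\curl$ to the momentum equation of \eqref{QH-Euler_syst} cancels the pressure and yields
\begin{equation*}
\partial_t \omega+\vu\cdot\nabla\omega+\curl(R\vu^\perp)=0\, ,
\end{equation*}
where the source term $\curl(R\vu^\perp)=\nabla R\cdot\vu+R\,\div\vu^\perp$ is controllable in $B^{s-1}_{\infty,r}$ by the already-estimated norms of $R$ and $\vu$. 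Applying the transport estimate (again sharpened à la Hmidi--Keraani--Vishik when $s=r=1$, so that the bound is linear in $\|\nabla\vu\|_{L^1_t(L^\infty)}$ instead of exponential) to $\omega$, and then reconstructing $\vu$ from $\omega$ via the $L^2\cap B^s_{\infty,r}$ control (using a Biot--Savart type relation), gives an a priori bound for $\|\vu(t)\|_{L^2\cap B^s_{\infty,r}}$ in terms of the initial data and $\int_0^t\|\nabla\vu\|_{L^\infty}\detau$. A Grönwall argument, using the assumption \eqref{cont_cond_Bes}, closes the estimate on $[0,T[$. Once $(R,\vu)$ is bounded in the working norms, the pressure is recovered from the elliptic equation $-\Delta\Pi=\div(\vu\cdot\nabla\vu+R\vu^\perp)$ and inherits the same regularity (arguing as in \eqref{time_reg_pressure}).

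The main obstacle here is the handling of the critical case $(s,r)=(1,1)$ with $p=+\infty$: at this regularity, classical product and transport estimates in Besov spaces produce a logarithmic loss that would only yield a double-exponential bound for $\|\vu\|_{B^1_{\infty,1}}$. It is precisely to avoid this loss that one must work on $\omega$ rather than on $\vu$ directly and invoke the improved linear transport estimate (Theorem \ref{thm:improved_est_transport}), which is linear in $\|\nabla\vu\|_{L^1_t(L^\infty)}$. This is the key technical step.

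Finally, for the statement on the lifespan, fix initial data $(R_0,\vu_0)$ satisfying the hypotheses of Theorem \ref{thm:W-P_besov_spaces_p-infty} for some admissible $(s,r)$ fulfilling \eqref{Lip_assumption}. Under that condition the embedding $B^s_{\infty,r}\hookrightarrow B^1_{\infty,1}\hookrightarrow W^{1,\infty}$ holds, so in particular $(R_0,\vu_0)\in B^1_{\infty,1}\times(L^2\cap B^1_{\infty,1})$; denote by $T^\ast_{s,r}$ and $T^\ast_{1,1}$ the respective lifespans. Uniqueness in the common framework $B^1_{\infty,1}$ shows $T^\ast_{s,r}\leq T^\ast_{1,1}$. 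For the reverse inequality, fix any $T<T^\ast_{1,1}$: then $\vu\in L^\infty_T(B^1_{\infty,1})\hookrightarrow L^\infty_T(W^{1,\infty})$, whence $\int_0^T\|\nabla\vu\|_{L^\infty}\dt<+\infty$. The continuation criterion just established thus forces the $B^s_{\infty,r}$-solution to persist beyond $T$, giving $T^\ast_{s,r}\geq T$; letting $T\uparrow T^\ast_{1,1}$ yields $T^\ast_{s,r}\geq T^\ast_{1,1}$. Specialising to the Sobolev scale via $H^s\equiv B^s_{2,2}\hookrightarrow B^{s-1}_{\infty,1}\hookrightarrow B^1_{\infty,1}$ (for $s>2$) concludes that the lifespan in $H^s$ coincides with the one in $B^1_{\infty,1}$, which is exactly what is needed in Subsection \ref{ss:improved_lifespan} to transfer the lifespan bound \eqref{improved_low_bound} from the critical framework to the $H^s$ framework of Theorem \ref{thm:well-posedness_Q-H-Euler}.
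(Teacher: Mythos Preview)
Your sketch is correct and follows the standard route the paper itself defers to \cite{C-F_sub}; the paper does not supply its own proof of this proposition but cites that reference for the details, and your argument (transport estimate for $R$, vorticity formulation for $\vu$ with Theorem~\ref{thm:improved_est_transport} at the endpoint $s=r=1$, reconstruction via Lemma~\ref{l:rel_curl}, then the embedding-plus-uniqueness-plus-continuation argument for lifespan independence) is exactly that strategy. One harmless slip: the identity for the source term should read $\curl(R\vu^\perp)=\div(R\vu)=\nabla R\cdot\vu$ (using $\div\vu=0$), as in \eqref{QH-Euler_vorticity}; your expression $\nabla R\cdot\vu+R\,\div\vu^\perp$ is not the correct expansion (note $\div\vu^\perp=-\omega\neq0$), though either form is controllable in $B^{s-1}_{\infty,r}$ and the argument is unaffected.
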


\subsection{The asymptotically global well-posedness result}\label{ss:improved_lifespan}
In this paragraph 
we focus on finding an asymptotic behaviour (in the regime of small oscillations for the densities) for the lifespan of solutions to system \eqref{QH-Euler_syst}. Namely, for small fluctuations $R_0$ of size $\delta>0$, the lifespan of solutions to this system tends to infinity when $\delta\rightarrow 0$. To show that, we have to take advantage of the \textit{linear} estimate in Theorem \ref{thm:improved_est_transport} for the transport equations in Besov spaces with zero regularity index. For that reason, it is important to work with the vorticity formulation of \eqref{QH-Euler_syst}, since $\omega \in B^0_{\infty,1}$. Thanks to the continuation criterion presented in Proposition \ref{prop:cont_criterion_Bes}, it is enough to find the bound of the lifespan in the lowest regularity space $B^1_{\infty,1}$. 


To begin with, we recall a general relation between a function and its $\curl$ that will be useful in the sequel. We refer to \cite{C-F_sub} for details of the proof.
\begin{lemma}\label{l:rel_curl}
Assume $f\in (L^2 \cap B^s_{\infty ,r})(\R^2)$ to be divergence-free. Denote by $\curl f:=-\d_2f_1+\d_1f_2$ its $\curl$ in $\R^2$.
Then, we have
\begin{equation}\label{eq:rel_curl}
\|f\|_{L^2\cap B^s_{\infty ,r}}\sim\|f\|_{L^2}+\|\curl f\|_{B^{s-1}_{\infty, r}}\, .
\end{equation}
\end{lemma}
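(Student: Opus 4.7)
The plan is to prove the norm equivalence by frequency localization, splitting the low-frequency block from the higher-frequency annuli and exploiting the two-dimensional Biot-Savart representation for divergence-free fields. First I would address the easy direction: since $\curl$ is a first-order differential operator, each dyadic block satisfies $\|\Delta_j \curl f\|_{L^\infty}\lesssim 2^j\|\Delta_j f\|_{L^\infty}$, so taking $2^{j(s-1)}$ weights and the $\ell^r$ norm over $j\geq -1$ gives $\|\curl f\|_{B^{s-1}_{\infty,r}}\lesssim \|f\|_{B^s_{\infty,r}}$. Adding the trivial bound $\|f\|_{L^2}\leq \|f\|_{L^2}$ yields one half of the equivalence.

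For the reverse inequality, the key observation is the representation furnished by the divergence-free constraint. Writing $\omega:=\curl f$, the condition $\div f=0$ gives $\widehat{f}(\xi)=-i\,\xi^\perp/|\xi|^2\,\widehat{\omega}(\xi)$ away from the origin. Then for any $j\geq 0$, applying the dyadic block $\Delta_j$ I get
\begin{equation*}
\widehat{\Delta_j f}(\xi)\,=\,\varphi(2^{-j}\xi)\,\frac{-i\,\xi^\perp}{|\xi|^2}\,\widehat{\omega}(\xi)\,=\,\frac{-i\,\xi^\perp}{|\xi|^2}\,\varphi(2^{-j}\xi)\,\widehat{\omega}(\xi)\,,
\end{equation*}
where $\varphi$ is the usual Littlewood-Paley annular cut-off. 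Since the symbol $\xi\mapsto \xi^\perp/|\xi|^2$ is smooth and homogeneous of degree $-1$ on the support of $\varphi(2^{-j}\cdot)$, a standard Fourier multiplier argument (or a direct convolution with the $L^1$-normalised, $2^{-j}$-scaled inverse Fourier transform) provides the bound $\|\Delta_j f\|_{L^\infty}\,\lesssim\, 2^{-j}\,\|\Delta_j \omega\|_{L^\infty}$, uniformly in $j\geq 0$. Multiplying by $2^{js}$ and taking the $\ell^r$ norm over $j\geq 0$ controls the high-frequency part of $\|f\|_{B^s_{\infty,r}}$ by $\|\omega\|_{B^{s-1}_{\infty,r}}$.

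It remains to estimate the block $\Delta_{-1}f$, and this is where the $L^2$ hypothesis enters the picture: since $\Delta_{-1}f$ has Fourier support in a fixed ball, Bernstein's inequality in dimension two yields $\|\Delta_{-1}f\|_{L^\infty}\,\lesssim\, \|\Delta_{-1}f\|_{L^2}\,\lesssim\,\|f\|_{L^2}$. Combining this low-frequency bound with the high-frequency estimate from the previous paragraph and adding back $\|f\|_{L^2}$ closes the inequality $\|f\|_{L^2\cap B^s_{\infty,r}}\lesssim \|f\|_{L^2}+\|\curl f\|_{B^{s-1}_{\infty,r}}$.

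The only subtle point I anticipate is justifying the multiplier bound on $\xi^\perp/|\xi|^2$ applied to each $\Delta_j$ with a constant independent of $j$; this is routine thanks to the scaling invariance of the symbol (homogeneity $-1$ combined with the dyadic rescaling), but it deserves to be spelled out, either by appealing to a Mikhlin-type statement on annuli or by exhibiting the convolution kernel $\mathcal{F}^{-1}(\psi(\xi)\,\xi^\perp/|\xi|^2)$ associated with a smooth cut-off $\psi$ supported in the annulus $\{1/2\leq|\xi|\leq 2\}$ and then rescaling. Everything else is a direct assembly of Littlewood-Paley estimates, and the equivalence follows.
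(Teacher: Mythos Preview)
Your proof is correct and follows the standard route: the Biot--Savart identity $\widehat f(\xi)=-i\,\xi^\perp/|\xi|^2\,\widehat\omega(\xi)$ combined with the homogeneity of the symbol to recover $\|\Delta_j f\|_{L^\infty}\lesssim 2^{-j}\|\Delta_j\omega\|_{L^\infty}$ for $j\geq 0$, and Bernstein to handle the low-frequency block $\Delta_{-1}f$ via the $L^2$ norm. The paper itself does not supply a proof of this lemma but refers to \cite{C-F_sub}; your argument is precisely the expected one and matches what that reference contains.
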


Therefore, due to the relation \eqref{eq:rel_curl}, we can define
\begin{equation}\label{def_mathcal_E}
\mathcal{E}(t):= \|\vu(t)\|_{L^2}+\|\omega(t)\|_{B^0_{\infty,1}}\sim \|\vu(t)\|_{L^2\cap B^1_{\infty,1}}\, ,
\end{equation}
where we have set $\omega :=\curl \vu=-\d_2u_1+\d_1u_2$, as above.

Since the $L^2$ norm of the velocity field is preserved, to control $\vu$ in $B^1_{\infty ,1}$, it will be enough to find estimates for $\curl \vu$ in $B^{0}_{\infty ,1}$. Hence, we apply again the $\curl$ operator to the second equation in system \eqref{QH-Euler_syst} to get the following system 
\begin{equation}\label{QH-Euler_vorticity}
\begin{cases}
\d_tR+\vu \cdot \nabla R=0\\
\d_t \omega +\vu \cdot \nabla \omega=-\div (R\vu)\, .
\end{cases}
\end{equation}
where we have defined $\omega :=\curl \vu=-\d_2u_1+\d_1u_2$.

Making use of Theorem \ref{thm:improved_est_transport}, we obtain 
\begin{equation*}
\|\omega (t)\|_{B^0_{\infty,1}}\leq C \left( \|\omega (0)\|_{B^0_{\infty,1}}+\int_0^t\|\div (R\vu)\|_{B^0_{\infty,1}}\detau\right)\left(1+\int_0^t\|\nabla \vu\|_{L^\infty}\detau\right)\, .
\end{equation*}
Now, we look at the bound for $\div (R\vu)$, finding that 
\begin{equation*}
\|\div (R\vu)\|_{B^0_{\infty,1}}\leq C \left(\|R\|_{L^\infty}\|\vu\|_{B^1_{\infty,1}}+\|\vu\|_{L^\infty}\|R\|_{B^1_{\infty,1}}\right)\leq C \|R\|_{B^1_{\infty,1}}\mathcal{E}(t)\, .
\end{equation*}
Then, we deduce 
\begin{equation}\label{eq:en_est_1}
\mathcal{E}(t)\leq C \left(\mc E(0)+\int_0^t\mc E(\tau)\|R\|_{B^1_{\infty,1}}\, \detau \right)\left(1+\int_0^t\mc E(\tau)\, \detau\right)\, .
\end{equation}
At this point, Theorem \ref{thm_transport} implies that 
\begin{equation*}
\|R(t)\|_{B^1_{\infty,1}}\leq \|R_0\|_{B^1_{\infty,1}}\exp \left(C\int_0^t \mc E(\tau)\, \detau\right)\, .
\end{equation*}
Plugging this bound into \eqref{eq:en_est_1} gives
\begin{equation*}
\mc E(t)\leq C \left(1+\int_0^t\mc E(\tau)\, \detau\right)\left(\mc E(0)+\|R_0\|_{B^1_{\infty,1}}\int_0^t \mc E(\tau)\exp \left(\int_0^\tau \mc E(s)\, \ds \right)\, \detau \right)\, .
\end{equation*}
We define
\begin{equation}\label{def_T^ast}
T^\ast:=\sup \left\{t>0:\|R_0\|_{B^1_{\infty,1}}\int_0^t \mc E(\tau)\exp \left(\int_0^\tau \mc E(s)\, \ds \right)\, \detau \leq \mc E(0)\right\}\, .
\end{equation}
Then, for all $t\in [0,T^\ast]$, we deduce 
\begin{equation*}
\mc E(t)\leq C\left(1+\int_0^t\mc E(\tau)\, \detau\right)\mc E(0)
\end{equation*}
and thanks to the Gr\"onwall's lemma we infer 
\begin{equation}\label{eq:en_est_2}
\mc E(t)\leq C\mc E(0)\,  e^{C\mc E(0)t}\, ,
\end{equation}
for a suitable constant $C>0$.

It remains to find a control on the integral of $\mc E(t)$. We have 
\begin{equation*}
\int_0^t\mc E(\tau) \, \detau \leq e^{C\mc E(0)t}-1
\end{equation*}
and, due to the previous bound \eqref{eq:en_est_2}, we get
\begin{equation}\label{estimate_integral_energy}
\begin{split}
\|R_0\|_{B^1_{\infty,1}}\int_0^t \mc E(\tau)\exp \left(\int_0^\tau \mc E(s)\, \ds \right)\, \detau &\leq C\|R_0\|_{B^1_{\infty,1}}\int_0^t \mc E(0)\, e^{C\mc E(0)\tau}\exp \left(e^{C\mc E(0)\tau}-1 \right)\, \detau \\
&\leq C\|R_0\|_{B^1_{\infty,1}}\left(\exp \left(e^{C\mc E(0)t}-1\right)-1\right)\, .
\end{split}
\end{equation}
Finally, by definition \eqref{def_T^ast} of $T^\ast$, we can argue  
\begin{equation*}
\mc E(0)\leq C\|R_0\|_{B^1_{\infty,1}}\left(\exp \left(e^{C\mc E(0)T^\ast}-1\right)-1\right)\, ,
\end{equation*}
that give the following lower bound for the lifespan of solutions:
\begin{equation*}
T^\ast\geq \frac{C}{\mc E(0)}\log\left(\log \left(C\frac{\mc E(0)}{\|R_0\|_{B^1_{\infty,1}}}+1\right)+1\right)\, .
\end{equation*}
From there, recalling the definition \eqref{def_mathcal_E} for $\mc E(0)$, we have 
\begin{equation}\label{T-ast:improved}
T^\ast\geq \frac{C}{\|\vu_0\|_{L^2\cap B^1_{\infty,1}}}\log\left(\log \left(C\frac{\|\vu_0\|_{L^2\cap B^1_{\infty,1}}}{\|R_0\|_{B^1_{\infty,1}}}+1\right)+1\right)\, ,
\end{equation}
for a suitable constant $C>0$. This is the claimed lower bound stated in Theorem \ref{thm:well-posedness_Q-H-Euler}.

\section{The lifespan of solutions to the primitive problem}\label{s:lifespan_full}
The main goal of this section is to present an ``asymptotically global'' well-posedness result for system \eqref{Euler-a_eps_1}, when the size of fluctuations of the densities goes to zero, in the spirit of Subsection \ref{ss:improved_lifespan}. We start by showing a continuation type criterion for system \eqref{Euler-a_eps_1} and discussing the related consequences (see Subsection \ref{ss:cont_criterion+consequences} below for details).
We conclude this section presenting the asymptotic behaviour of the lifespan of solutions to system \eqref{Euler-a_eps_1}: the lifespan may be very large, if the size of non-homogeneities $a_{0,\veps}$ defined in \eqref{def_a_veps} is small (see relation \eqref{asymptotic_time} below). We point out that it is \textit{not} clear at all that the global existence still holds even in a fast rotation regime. 
 
\subsection{The continuation criterion and consequences}\label{ss:cont_criterion+consequences}
In this paragraph, we start by presenting a continuation type result in Sobolev spaces for system \eqref{Euler-a_eps_1}, in the spirit of the Beale-Kato-Majda continuation criterion \cite{B-K-M}. 
We refer to the work \cite{B-L-S} by Bae, Lee and Shin, regarding the details of the proof. 
\begin{proposition}\label{prop:cont_criterion_original_prob}
Take $\veps \in\;  ]0,1]$ fixed. Let $(a_{0,\veps},\vu_{0,\veps})\in L^\infty \times H^s$ with $\nabla a_{0,\veps}\in H^{s-1} $ and $\div \vu_{0,\veps}=0$. Given a time $T>0$, let $(a_\veps,\vu_\veps, \nabla \Pi_\veps)$ be a solution of \eqref{Euler-a_eps_1} on $[0,T[$ that belongs to $L^\infty_t(L^\infty)\times L^\infty_t(H^s)\times L^\infty_t(H^s)$ and $\nabla a_\veps \in L^\infty_t(H^{s-1}) $ for any $t\in [0,T[$. If we assume that  
\begin{equation}\label{cont_cond_orig_prob}
\int_0^T\|\nabla \vu_\veps \|_{L^\infty}\, \dt<+\infty\, ,
\end{equation} 
then $(a_\veps, \vu_\veps,\nabla \Pi_\veps)$ can be continued beyond $T$ into a solution of \eqref{Euler-a_eps_1} with the same regularity.
\end{proposition}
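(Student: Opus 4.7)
I would follow the standard Beale-Kato-Majda contradiction strategy. Assume that $T$ is the maximal time of existence in the stated regularity class; the aim is to show that, under \eqref{cont_cond_orig_prob}, all the norms $\|a_\veps\|_{L^\infty}$, $\|\nabla a_\veps\|_{H^{s-1}}$, $\|\vu_\veps\|_{H^s}$ and $\|\nabla \Pi_\veps\|_{H^s}$ remain uniformly bounded as $t\to T^-$. Once this is granted, one restarts the local well-posedness (Theorem \ref{W-P_fullE}) from an initial time $T-\eta$ with $\eta$ arbitrarily small: since the existence time provided by that theorem depends only on the norms of the initial datum, this produces a solution on an interval strictly containing $T$, contradicting maximality.

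The a priori bounds are closed as follows. The $L^\infty$ norm of $a_\veps$ is conserved by the transport equation with divergence-free velocity. For $\nabla a_\veps$, applying $\Delta_j$ to its equation, using the second commutator estimate of Lemma \ref{l:commutator_est}, and exploiting the embedding $H^{s-1}\hookrightarrow L^\infty$ (valid because $s>2$), one obtains a Gronwall-type inequality for $\|\nabla a_\veps(t)\|_{H^{s-1}}$ driven by $\|\nabla \vu_\veps\|_{L^\infty}$ and $\|\vu_\veps\|_{H^s}$. For $\vu_\veps$, localizing the momentum equation in \eqref{Euler-a_eps_1} via $\Delta_j$ and pairing with $\Delta_j \vu_\veps$ in $L^2$, the Coriolis term vanishes by skew-symmetry and the leading pressure gradient vanishes against the divergence-free $\Delta_j \vu_\veps$; the remainder $\Delta_j(a_\veps\,\nabla \Pi_\veps)$ is then bounded by the tame product estimate of Proposition \ref{prop:app_fine_tame_est}, while the pressure itself is controlled through the elliptic bound \eqref{est_Pi_final} associated with \eqref{eq:elliptic_problem}.

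The main obstacle is that a naive combination of these estimates produces quadratic (and super-linear) terms in $\|\vu_\veps\|_{H^s}$ on the right-hand side, which would prevent closing the argument only with the integrability of $\|\nabla \vu_\veps\|_{L^\infty}$. To overcome this, one passes to the vorticity formulation $\omega_\veps := \curl \vu_\veps$, whose transport equation benefits from the linear-in-Lipschitz-norm estimate of Theorem \ref{thm:improved_est_transport}, and invokes a logarithmic interpolation inequality bounding $\|\nabla \vu_\veps\|_{H^{s-1}}$ by $\|\nabla \vu_\veps\|_{L^\infty}$ times a logarithm of higher Sobolev norms of $\vu_\veps$. Combining all of this, one arrives at a differential inequality of the form $Y'(t)\,\leq\, C\,\Phi(Y(t))\,\bigl(1 + \|\nabla \vu_\veps(t)\|_{L^\infty}\bigr)$ for $Y(t):=\|\nabla a_\veps(t)\|_{H^{s-1}} + \|\vu_\veps(t)\|_{H^s}$, from which Gronwall's lemma together with \eqref{cont_cond_orig_prob} yields $\sup_{t<T} Y(t) < +\infty$. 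The elliptic bound \eqref{est_Pi_final} then propagates this control to $\|\nabla \Pi_\veps\|_{L^\infty_T(H^s)}$, completing the argument. The detailed execution parallels that of Bae, Lee and Shin in \cite{B-L-S}.
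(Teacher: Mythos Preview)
The paper itself does not prove Proposition \ref{prop:cont_criterion_original_prob}: it simply states the result and refers to \cite{B-L-S} for the details. Your outline follows the same Beale--Kato--Majda strategy and likewise defers to \cite{B-L-S} at the end, so at the level of approach there is nothing to compare.

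That said, one step in your sketch is stated in the wrong direction and should be corrected. You write that one ``invokes a logarithmic interpolation inequality bounding $\|\nabla \vu_\veps\|_{H^{s-1}}$ by $\|\nabla \vu_\veps\|_{L^\infty}$ times a logarithm of higher Sobolev norms''. No such inequality exists: a high Sobolev norm cannot be controlled by an $L^\infty$ norm times a logarithm. The relevant logarithmic inequality goes the other way (Brezis--Gallouet/Beale--Kato--Majda type): one bounds $\|\nabla \vu_\veps\|_{L^\infty}$ by $\|\omega_\veps\|_{L^2\cap L^\infty}\log\big(e+\|\vu_\veps\|_{H^s}\big)$. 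Combined with \emph{tame} commutator and product estimates (which already make the $H^s$ energy inequality linear in $\|\vu_\veps\|_{H^s}$ with coefficient $\|\nabla \vu_\veps\|_{L^\infty}$), this yields a differential inequality of Osgood type, $Y'\leq C(1+\|\nabla \vu_\veps\|_{L^\infty})\,Y\log(e+Y)$, which is what closes under \eqref{cont_cond_orig_prob}. Relatedly, your appeal to Theorem \ref{thm:improved_est_transport} is misplaced here: that linear $B^0_{\infty,1}$ transport estimate is the tool the paper uses in Subsection \ref{ss:asym_lifespan} for the lifespan bound, not for the $H^s$ continuation criterion.
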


At this point, 
if we are able to control the norm $\|\ue \|_{L^\infty_T(L^2\cap B^1_{\infty,1})}$, then we are able to bound $\|\nabla \ue \|_{L^\infty_T(L^\infty)}$. This will imply \eqref{cont_cond_orig_prob} and, therefore, the solution will exist until the time $T$. 


Indeed, we have that 
\begin{equation*}
\|\nabla \ue \|_{L^\infty_T(L^\infty)}\leq C\, \| \ue \|_{L^\infty_T(B^1_{\infty,1})}\, .
\end{equation*} 
As already pointed out in Lemma \ref{l:rel_curl}, to control the $B^1_{\infty,1}$ of $\ue$ it is enough to have a $L^2$ estimate for $\ue$ and a $B^0_{\infty, 1}$ estimate for its $\curl$. Those estimates are the topic of the next Subsection \ref{ss:asym_lifespan}, provided that the time $T>0$ is defined as in \eqref{def_T^ast_veps} below. 
Therefore, $\|\nabla \ue\|_{L^\infty_T(L^\infty)}<+\infty$ and so 
$$ \int_0^T \|\nabla \ue\|_{L^\infty}<+\infty\, ,$$
for such $T>0$. 

Finally, we note that we have already shown the existence and uniqueness of solutions to system \eqref{Euler-a_eps_1} in the Sobolev spaces $H^s$ with $s>2$ (see Section \ref{s:well-posedness_original_problem}) and thanks to Proposition \ref{p:embed}, those spaces are continuously embedded in the space $B^1_{\infty,1}$.

\subsection{The asymptotic lifespan}\label{ss:asym_lifespan}
In this paragraph we focus our attention on the lifespan of solutions to the primitive system \eqref{Euler-a_eps_1}. As we will see in the sequel, the fast rotation effects are not enough to get an ``asymptotically global'' well-posedness result. Similarly to the quasi-homogeneous case, we need smallness for the $B^1_{\infty,1}$ norm of the initial fluctuations $a_{0,\veps}$ defined in \eqref{def_a_veps}.

First of all, we have to take advantage of the vorticity formulation of system \eqref{Euler-a_eps_1}. To do so, we apply the $\curl$ operator to the momentum equation, obtaining 
\begin{equation}\label{curl_eq}
\d_t  \omega_\veps +\ue \cdot \nabla  \omega_\veps+ \nabla a_\veps\,  \wedge\,  \nabla \Pi_\veps=0\, ,
\end{equation}
where $\omega_\veps:=\curl \ue$ and $\nabla a_\veps\,  \wedge\,  \nabla \Pi_\veps:=\d_1 a_\veps \,\d_2\Pi_\veps-\d_2 a_\veps \d_1\Pi_\veps$.

We notice that vorticity formulation is the key point to bypass the issues coming from the Coriolis force, which singular effects disappear in \eqref{curl_eq}.

Next, we make use of Theorem \ref{thm:improved_est_transport} (in the Appendix) and we deduce that 
\begin{equation}\label{transp_curl}
\| \omega_\veps \|_{B^0_{\infty,1}}\leq C \left(\| \omega_{0,\veps}\|_{B^0_{\infty,1}}+\int_0^t \|\nabla a_\veps\, \wedge\, \nabla \Pi_\veps \|_{B^0_{\infty,1}}\detau \right)\left(1+\int_0^t\|\nabla \ue\|_{L^\infty} \detau\right)\, .
\end{equation}
We start by bounding the $B^0_{\infty,1}$ norm of $\nabla a_\veps\, \wedge\, \nabla \Pi_\veps$. We observe that 
\begin{equation}\label{l-p_dec_wedge}
\begin{split}
\d_1a_\veps\, \d_2 \Pi_\veps-\d_2a_\veps\, \d_1\Pi_\veps&=\mc T_{\d_1a_\veps}\d_2 \Pi_\veps-\mc T_{\d_2a_\veps}\d_1 \Pi_\veps+\mc T_{\d_2\Pi_\veps}\d_1 a_\veps-\mc T_{\d_1\Pi_\veps}\d_2 a_\veps\\
&+\d_1\mc R(a_\veps -\Delta_{-1}a_\veps,\, \d_2 \Pi_\veps)-\d_2\mc R(a_\veps -\Delta_{-1}a_\veps,\, \d_1 \Pi_\veps)\\
&+\mc R(\d_1\Delta_{-1}a_\veps,\, \d_2 \Pi_\veps)+\mc R(\d_2\Delta_{-1}a_\veps,\, \d_1 \Pi_\veps)\, .
\end{split}
\end{equation}


Applying Proposition \ref{T-R} directly to the terms involving the paraproduct $\mc T$, we have 
\begin{equation*}
\|\mc T_{\nabla a_\veps}\nabla \Pi_\veps\|_{B^0_{\infty,1}}+\|\mc T_{\nabla \Pi_\veps}\nabla  a_\veps\|_{B^0_{\infty,1}}\leq C \left(\|\nabla a_\veps\|_{L^\infty}\|\nabla \Pi_\veps\|_{B^0_{\infty,1}}+\|\nabla a_\veps\|_{B^0_{\infty,1}}\|\nabla \Pi_\veps\|_{L^\infty}\right)\, .
\end{equation*}
Next, we have to deal with the remainders $\mc R$. We start by bounding the $B^0_{\infty,1}$ norm of $\d_1\mc R(a_\veps -\Delta_{-1}a_\veps,\, \d_2 \Pi_\veps)$. 
One has:
\begin{equation*}
\begin{split}
\|\d_1\mc R(a_\veps -\Delta_{-1}a_\veps,\, \d_2 \Pi_\veps)\|_{B^0_{\infty,1}}&\leq C\|\mc R(a_\veps -\Delta_{-1}a_\veps,\, \d_2 \Pi_\veps)\|_{B^1_{\infty,1}}\\
&\leq C\left(\|\nabla \Pi_\veps\|_{B^0_{\infty,\infty}}\|(\text{Id}-\Delta_{-1})\, a_\veps\|_{B^1_{\infty,1}}\right)\\
&\leq C \left(\|\nabla \Pi_\veps\|_{L^\infty}\|\nabla a_\veps\|_{B^0_{\infty,1}}\right)\, ,
\end{split}
\end{equation*}
where we have employed the localization properties of the Littlewood-Paley decomposition. In a similar way, one can argue for 
$\d_2\mc R(a_\veps -\Delta_{-1}a_\veps,\, \d_1 \Pi_\veps)$.

It remains to bound $\mc R(\d_1\Delta_{-1}a_\veps,\, \d_2 \Pi_\veps)$. Analogously, one can treat the term $\mc R(\d_2\Delta_{-1}a_\veps,\, \d_1 \Pi_\veps)$ in \eqref{l-p_dec_wedge}. 
We obtain that 
\begin{equation*}
\|\mc R(\d_1\Delta_{-1}a_\veps,\, \d_2 \Pi_\veps)\|_{B^0_{\infty,1}}\leq C \|\mc R(\d_1\Delta_{-1}a_\veps,\, \d_2 \Pi_\veps)\|_{B^1_{\infty,1}}\leq C\left(\|\nabla \Pi_\veps\|_{L^\infty}\|\d_1\Delta_{-1}a_\veps\|_{B^1_{\infty,1}}\right)\, .
\end{equation*}
Employing the spectral properties of operator $\Delta_{-1}$, one has that 
\begin{equation*}
\|\d_1\Delta_{-1}a_\veps\|_{B^1_{\infty,1}}\leq C\|\Delta_{-1}\nabla a_\veps\|_{L^\infty}\, .
\end{equation*}
Then, 
$$ \|\mc R(\d_1\Delta_{-1}a_\veps,\, \d_2 \Pi_\veps)\|_{B^0_{\infty,1}}\leq C \left(\|\nabla \Pi_\veps\|_{L^\infty}\|\nabla a_\veps\|_{B^0_{\infty,1}}\right)\, .$$
Finally, we get
\begin{equation*}
\|\nabla a_\veps\, \wedge\, \nabla \Pi_\veps \|_{B^0_{\infty,1}}\leq C\left(\|\nabla a_\veps\|_{L^\infty}\|\nabla \Pi_\veps\|_{B^0_{\infty,1}}+\|\nabla a_\veps\|_{B^0_{\infty,1}}\|\nabla \Pi_\veps\|_{L^\infty}\right)\, .
\end{equation*}
So plugging the previous estimate in \eqref{transp_curl}, one gets
\begin{equation}
\| \omega_\veps \|_{B^0_{\infty,1}}\leq C \left(\| \omega_{0,\veps}\|_{B^0_{\infty,1}}+\int_0^t \|\nabla a_\veps\|_{B^0_{\infty,1}} \| \nabla \Pi_\veps \|_{B^0_{\infty,1}}\detau \right)\left(1+\int_0^t\|\nabla \ue\|_{L^\infty} \detau\right)\, .
\end{equation}
At this point, we define
\begin{equation}\label{def_energy_and_A}
E_\veps(t):=\|\ue (t)\|_{L^2\cap B^1_{\infty,1}}\qquad \text{and}\qquad \mc A_\veps(t):=\|\nabla a_\veps (t)\|_{B^0_{\infty,1}}\, .
\end{equation}
In this way, we have 
\begin{equation}\label{energy}
E_\veps(t)\leq C \left(E_\veps (0)+\int_0^t \mc A_\veps(\tau) \|\nabla \Pi_\veps\|_{B^0_{\infty,1}}\detau \right)\left(1+\int_0^t E_\veps (\tau )\detau \right)\, .
\end{equation}
Next, we recall that, for $i=1,2$:
\begin{equation*}
\d_t\, \d_i a_\veps+\ue \cdot \nabla \, \d_i a_\veps=-\d_i \ue \cdot \nabla a_\veps
\end{equation*}
and, due to the divergence-free condition on $\vec u_\veps$, we can write
\begin{equation*}
\d_i \ue \cdot \nabla a_\veps=\sum_j\d_i \ue^j \, \d_ja_\veps=\sum_j\Big(\d_i(\ue^j\, \d_j a_\veps)-\d_j(\ue^j\, \d_ia_\veps)\Big)\, .
\end{equation*}
So, using Proposition \ref{T-R} and the fact that 
\begin{equation*}
\|\mc \d_i R(\ue^j,\, \d_j a_\veps)\|_{B^0_{\infty,1}}\leq C\, \|\mc R(\ue^j,\, \d_j a_\veps)\|_{B^1_{\infty,1}}\leq C\, \|\nabla a_\veps\|_{B^0_{\infty,1}}\|\ue\|_{B^1_{\infty,1}}\, ,
\end{equation*}
we may finally get
\begin{equation*}
\|\d_i \ue \cdot \nabla a_\veps\|_{B^0_{\infty,1}}\leq C \|\nabla a_\veps\|_{B^0_{\infty,1}}\|\ue \|_{B^1_{\infty,1}}\, .
\end{equation*}
Thus, 
\begin{equation*}
\|\nabla a_\veps (t)\|_{B^0_{\infty,1}}\leq \|\nabla a_{0, \veps}\|_{B^0_{\infty,1}}\exp \left(C\int_0^t \| \ue\|_{B^1_{\infty,1}}\detau \right)\, .
\end{equation*}
Therefore, recalling \eqref{def_energy_and_A}, one has
\begin{equation}\label{est_A}
\mc A_\veps(t)\leq \mc A_\veps (0) \exp \left( C\int_0^t E(\tau) \detau\right)\, .
\end{equation}
The next goal is to bound the pressure term in $B^0_{\infty,1}$. Actually, we shall bound its $B^1_{\infty,1}$ norm. Similarly to the analysis performed in Subsection \ref{ss:unif_est} for the $H^s$ norm (see e.g. inequality \eqref{est_Pi_H^s_1}), there exists some exponent $\lambda \geq 1$ such that
\begin{equation}\label{est_pres}
\|\nabla \Pi_\veps\|_{B^1_{\infty,1}}\leq C\left(\left(1+\veps \|\nabla a_\veps \|_{B^0_{\infty,1}}^\lambda \right)\|\nabla \Pi_\veps \|_{L^2}+\veps\, \|\vrho_\veps\, \div (\ue \cdot \nabla \ue)\|_{B^0_{\infty,1}}+\|\vrho_\veps \, \div \ue^\perp\|_{B^0_{\infty,1}}\right)\, .
\end{equation}
The $L^2$ estimate for the pressure term follows in a similar way to one performed in \eqref{est:Pi_L^2}, i.e. 
\begin{equation}\label{L^2-est-pressure}
\|\nabla \Pi_\veps\|_{L^2}\leq C\, \veps \, \|\ue \|_{L^2}\|\nabla \ue \|_{L^\infty}+\|\ue^\perp \|_{L^2}\, .
\end{equation}

Next, as showed above in the bound for $\|\d_i \ue \cdot \nabla a_\veps\|_{B^0_{\infty,1}}$, combining Bony's decomposition with the fact that $\div (\ue \cdot \nabla \ue)=\nabla \ue :\nabla \ue$, we may infer:
\begin{equation*}
\|\div (\ue \cdot \nabla \ue)\|_{B^0_{\infty,1}}\leq C\, \|\ue \|^2_{B^1_{\infty,1}}\, .
\end{equation*} 
Now, from Proposition 3 of \cite{D-F_JMPA}, we can estimate the $B^1_{\infty,1}$ norm of the density in the following way:
\begin{equation*}
\|\vrho_\veps\|_{B^1_{\infty,1}}\leq C\, \left( \oline\vrho+\veps\, \|\nabla a_\veps\|_{B^0_{\infty,1}}\right)\, .
\end{equation*}
Finally, plugging the $L^2$ estimate \eqref{L^2-est-pressure} and all the above inequalities in \eqref{est_pres}, one may conclude that
\begin{equation}\label{est_pres_1}
\begin{split}
\|\nabla \Pi_\veps\|_{B^1_{\infty,1}}&\leq C \left(1+\veps\, \|\nabla a_\veps \|_{B^0_{\infty,1}}^\lambda \right)\left(\veps\, \|\ue\|_{L^2}\|\nabla \ue\|_{B^0_{\infty,1}}+\|\ue \|_{L^2}\right)\\
&+C \left(1+\veps\, \|\nabla a_\veps \|_{B^0_{\infty,1}} \right)\left(\veps\, \| \ue\|_{B^1_{\infty,1}}^2+\|\ue \|_{B^1_{\infty,1}}\right)\\
&\leq C (1+\veps\, \mc A_\veps^\lambda)(\veps\, E_\veps^2+E_\veps)+C(1+\veps \mc A_\veps)(\veps\, E_\veps^2+E_\veps)\\
&\leq C (\veps \, E_\veps^2+E_\veps)(1+\veps\,\mc A_\veps +\veps\, \mc A_\veps^\lambda)\\
&\leq C (\veps \, E_\veps^2+E_\veps)(1 +\veps\, \mc A_\veps^\lambda)\, .
\end{split}
\end{equation}
We insert now in \eqref{energy} the estimates found in \eqref{est_pres_1} and in \eqref{est_A}, deducing that 
\begin{equation}\label{est_E}
E_\veps(t)\leq C\left(E_\veps(0)+\mc B_\veps(0)\int_0^t \exp \left(C \int_0^\tau E_\veps (s)\, \ds\right) \left(\veps\, E_\veps^2(\tau)+ E_\veps(\tau)\right)\, \detau \right)\left(1+\int_0^t E_\veps (\tau) \detau \right),
\end{equation}
where we have set $\mc B_\veps (0):=\mc A_\veps(0)+\veps\, \mc A_\veps(0)^{\lambda +1} $.

At this point, we define $T_\veps^\ast>0$ such that
\begin{equation}\label{def_T^ast_veps}
T_\veps^\ast:= \sup \left\{t>0 : \, \mc B_\veps(0)\int_0^t \exp \left(C \int_0^\tau E_\veps (s)\, \ds\right) \left(\veps\, E_\veps^2(\tau)+ E_\veps(\tau)\right)\, \detau \leq E_\veps(0) \right\}\, .
\end{equation}
So, from \eqref{est_E} and using Gr\"onwall's inequality, we obtain that
\begin{equation*}
E_\veps(t)\leq C\,  E_\veps(0)e^{CtE_\veps(0)}\, ,
\end{equation*}
for all $t\in [0,T_\veps^\ast]$.

The previous estimate implies that, for all $t\in [0,T_\veps^\ast]$, one has 
\begin{equation*}
\int_0^t E_\veps(\tau) \detau \leq  e^{CtE_\veps(0)}-1\, .
\end{equation*}
Analogously to inequality \eqref{estimate_integral_energy} in Subsection \ref{ss:improved_lifespan}, we can argue that  
\begin{equation*}
\mc B_\veps(0)\int_0^t \exp \left(C \int_0^\tau E_\veps (s)\, \ds\right)  E_\veps(\tau)\, \detau\leq C \mc B_\veps(0) \left(\exp \left(e^{CtE_\veps(0)}-1\right)-1\right)\, .
\end{equation*}
Then, it remains to control 
$$ \veps \mc B_\veps(0) \int_0^t \exp \left(C \int_0^\tau E_\veps (s)\, \ds\right)  E_\veps^2(\tau)\, \detau\, . $$ 
For this term, we may infer that 
\begin{equation*}
\begin{split}
\veps \mc B_\veps(0)\int_0^t \exp \left(C \int_0^\tau E_\veps (s)\, \ds\right)  E_\veps^2(\tau)\, \detau &\leq C\, \veps \,\mc B_\veps(0) \int_0^t E_\veps^2(0)e^{CtE_\veps(0)}\, \exp\left(e^{CtE_\veps(0)}-1\right)\, \detau\\
&\leq C\, \veps\, \mc B_\veps(0) E_\veps (0) \left(\exp \left(e^{CtE_\veps(0)}-1\right)-1\right)\, .
\end{split}
\end{equation*}
In the end, by definition \eqref{def_T^ast_veps} of $T^\ast_\veps$, we deduce
\begin{equation}\label{asymptotic_time}
T^\ast_\veps\geq \frac{C}{E_\veps(0)}\log\left(\log\left(\frac{CE_\veps(0)}{\max \{\mc B_\veps(0),\, \veps \, \mc B_\veps(0)E_\veps(0)\}}+1\right)+1\right),
\end{equation} 
for a suitable constant $C>0$. This concludes the proof of Theorem \ref{W-P_fullE}.

\appendix

\section{Appendix - A few tools from Fourier analysis} \label{app:FA}

This appendix is devoted to present some tools from Fourier analysis, which we have exploited in our analysis. We recall some definitions and properties of Littlewood-Paley theory, Besov spaces and paradifferential calculus. Moreover, using those notions, we will focus on the study of transport equations in Besov spaces.

\subsection{Littlewood-Paley theory and Besov spaces} \label{ss:LP_theory} 
In this paragraph, we recall the main results concerning the Littlewood-Paley theory. We refer e.g. to Chapter 2 of \cite{B-C-D} for details.
For simplicity of exposition, let us deal with the $\R^d$ case, with $d\geq1$; however, the whole construction can be adapted also to the $d$-dimensional torus $\TT^d$, and to the ``hybrid'' case
$\R^{d_1}\times\TT^{d_2}$.

\medbreak
First of all, let us introduce the \emph{Littlewood-Paley decomposition}. For this
we fix a smooth radial function $\chi$ such that $\Supp\chi\subset B(0,2)$, $\chi\equiv 1$ in a neighborhood of $B(0,1)$
and the map $r\mapsto\chi(r\,e)$ is non-increasing over $\R_+$ for all unitary vectors $e\in\R^d$.
Set $\varphi\left(\xi\right)=\chi\left(\xi\right)-\chi\left(2\xi\right)$ and $\vphi_j(\xi):=\vphi(2^{-j}\xi)$ for all $j\geq0$.
The dyadic blocks $(\Delta_j)_{j\in\Z}$ are defined by\footnote{We agree  that  $f(D)$ stands for 
the pseudo-differential operator $u\mapsto\mc{F}^{-1}[f(\xi)\,\what u(\xi)]$.} 
$$
\Delta_j\,:=\,0\quad\mbox{ if }\; j\leq-2,\qquad\Delta_{-1}\,:=\,\chi(D)\qquad\mbox{ and }\qquad
\Delta_j\,:=\,\varphi(2^{-j}D)\quad \mbox{ if }\;  j\geq0\,.
$$
For any $j\geq0$ fixed, we  also introduce the \emph{low frequency cut-off operator}
\begin{equation} \label{eq:S_j}
S_j\,:=\,\chi(2^{-j}D)\,=\,\sum_{k\leq j-1}\Delta_{k}\,.
\end{equation}
Note that $S_j$ is a convolution operator. More precisely, after defining
$$
K_0\,:=\,\mc F^{-1}\chi\qquad\qquad\mbox{ and }\qquad\qquad K_j(x)\,:=\,\mathcal{F}^{-1}[\chi (2^{-j}\cdot)] (x) = 2^{jd}K_0(2^j x)\,,
$$
for all $j\in\N$ and all tempered distributions $u\in\mc S'$, we have that $S_ju\,=\,K_j\,*\,u$.
Thus the $L^1$ norm of $K_j$ is independent of $j\geq0$, hence $S_j$ maps continuously $L^p$ into itself, for any $1 \leq p \leq +\infty$.

The following classical property holds true: for any $u\in\mc{S}'$, then one has the equality $u=\sum_{j}\Delta_ju$ in the sense of $\mc{S}'$.
Let us also recall the so-called \emph{Bernstein inequalities}.
  \begin{lemma} \label{l:bern}
Let  $0<r<R$.   A constant $C$ exists so that, for any non-negative integer $k$, any couple $(p,q)$ 
in $[1,+\infty]^2$, with  $p\leq q$,  and any function $u\in L^p$,  we  have, for all $\lambda>0$,
$$
\displaylines{
{\Supp}\, \widehat u \subset   B(0,\lambda R)\quad
\Longrightarrow\quad
\|\nabla^k u\|_{L^q}\, \leq\,
 C^{k+1}\,\lambda^{k+d\left(\frac{1}{p}-\frac{1}{q}\right)}\,\|u\|_{L^p}\;;\cr
{\Supp}\, \widehat u \subset \{\xi\in\R^d\,:\, \lambda r\leq|\xi|\leq \lambda R\}
\quad\Longrightarrow\quad C^{-k-1}\,\lambda^k\|u\|_{L^p}\,
\leq\,
\|\nabla^k u\|_{L^p}\,
\leq\,
C^{k+1} \, \lambda^k\|u\|_{L^p}\,.
}$$
\end{lemma}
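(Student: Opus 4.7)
The plan is to prove both implications by representing the action of $\partial^\alpha$ (respectively of its inverse on an annular spectrum) as convolution with a scaled Schwartz kernel, and then invoking Young's convolution inequality.

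For the first implication, I would fix once and for all a smooth cut-off $\theta\in C_c^\infty(\R^d)$ with $\theta\equiv 1$ on $B(0,1)$ and $\Supp \theta\subset B(0,2)$. Since $\Supp \what u\subset B(0,\lambda R)$, the rescaled function $\theta(\cdot/(\lambda R))$ equals $1$ on $\Supp \what u$, whence for every multi-index $\alpha$ with $|\alpha|=k$ the identity $\partial^\alpha u = g_{\lambda,\alpha}*u$ holds, where
\[
g_{\lambda,\alpha}(y)\,:=\,(\lambda R)^{d+k}\,G_\alpha\bigl((\lambda R)\, y\bigr)\qquad\text{and}\qquad G_\alpha\,:=\,\partial^\alpha \mc F^{-1}\theta
\]
is Schwartz. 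A direct change of variables yields $\|g_{\lambda,\alpha}\|_{L^m}=(\lambda R)^{k+d(1-1/m)}\|G_\alpha\|_{L^m}$. I would then apply Young's convolution inequality with the choice $1+1/q=1/m+1/p$ (so that $d(1-1/m)=d(1/p-1/q)$) and sum over $|\alpha|=k$ to conclude the first claim.

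For the second implication, the upper bound is a particular case of the first one (take $p=q$ and enlarge the cut-off to a slightly dilated annulus). The lower bound is the real content, and requires inverting $\nabla^k$ on the spectrum of $u$. To that end, I would pick $\psi\in C_c^\infty(\R^d\setminus\{0\})$ with $\Supp \psi\subset\{r/2\leq|\xi|\leq 2R\}$ and $\psi\equiv 1$ on $\{r\leq|\xi|\leq R\}$, and exploit the multinomial identity $|\xi|^{2k}=\sum_{|\alpha|=k}\binom{k}{\alpha}\xi^{2\alpha}$ to define the smooth compactly supported symbols
\[
h_\alpha(\xi)\,:=\,\binom{k}{\alpha}\,\frac{\xi^\alpha}{|\xi|^{2k}}\,\psi(\xi),
\]
which by construction satisfy $\sum_{|\alpha|=k} h_\alpha(\xi)\,(i\xi)^\alpha = i^k\,\psi(\xi)$. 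Since $\psi(\xi/\lambda)\equiv 1$ on $\Supp \what u$, this produces the representation $u = i^{-k}\,\lambda^{-k}\sum_{|\alpha|=k}h_\alpha(D/\lambda)\,\partial^\alpha u$. Each operator $h_\alpha(D/\lambda)$ is convolution with $\lambda^d H_\alpha(\lambda\,\cdot)$ for the Schwartz function $H_\alpha:=\mc F^{-1}h_\alpha$, whose $L^1$-norm is scale-invariant; a final application of Young's inequality then gives the desired lower bound $\|u\|_{L^p}\leq C^{k+1}\lambda^{-k}\|\nabla^k u\|_{L^p}$.

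The hard part will be not the scheme itself but the uniform-in-$k$ control of the constants, namely the verification that everything can be packaged into the stated geometric bound $C^{k+1}$. This requires controlling a single Schwartz seminorm of each $G_\alpha$ and $H_\alpha$ uniformly in $|\alpha|=k$; I would handle this via Leibniz' rule expansions, using the combinatorial identity $\sum_{|\alpha|=k}\binom{k}{\alpha}=d^k$ and the fact that all derivatives of the fixed cut-offs $\theta$ and $\psi$ are bounded on their (fixed, compact) supports.
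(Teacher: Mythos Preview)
Your argument is correct and is precisely the standard proof of the Bernstein inequalities (see Lemma 2.1 in Bahouri--Chemin--Danchin, which the paper cites). Note, however, that the paper does \emph{not} supply its own proof of this lemma: it is merely recalled in the appendix as a classical fact, with the reference to \cite{B-C-D} serving in lieu of a proof. So there is nothing to compare against beyond that reference, and your write-up matches it: reduce to convolution with a rescaled Schwartz kernel built from a fixed cut-off, apply Young's inequality for the direct estimate, and for the reverse estimate on an annulus build an explicit left inverse of $(i\xi)^\alpha$ via the multinomial expansion of $|\xi|^{2k}$.

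Two minor remarks on the execution. First, in the direct inequality there is no need to ``sum over $|\alpha|=k$'': the statement concerns $\|\nabla^k u\|_{L^q}$, and bounding each $\|\partial^\alpha u\|_{L^q}$ separately already suffices (summing would only cost an extra polynomial-in-$k$ factor, which is harmless). Second, your closing paragraph correctly identifies the only delicate point, namely the $C^{k+1}$ dependence. For $G_\alpha$ this follows because on the fixed support of $\theta$ one has $|\xi^\alpha|\le (2R)^k$ and differentiating $\xi^\alpha$ at most $d{+}1$ times produces coefficients bounded polynomially in $k$; for $H_\alpha$ the same reasoning works once you note that on the fixed annulus $\{r/2\le|\xi|\le 2R\}$ each $\xi$-derivative of $|\xi|^{-2k}$ contributes only a factor that is linear in $k$, and that $\sum_{|\alpha|=k}\binom{k}{\alpha}=d^{\,k}$. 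All such polynomial prefactors are absorbed into a geometric bound $C^{k+1}$ with $C=C(d,r,R)$.
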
   

By use of Littlewood-Paley decomposition, we can define the class of Besov spaces.
\begin{definition} \label{d:B}
  Let $s\in\R$ and $1\leq p,r\leq+\infty$. The \emph{non-homogeneous Besov space}
$B^{s}_{p,r}$ is defined as the subset of tempered distributions $u$ for which
$$
\|u\|_{B^{s}_{p,r}}\,:=\,
\left\|\left(2^{js}\,\|\Delta_ju\|_{L^p}\right)_{j\geq -1}\right\|_{\ell^r}\,<\,+\infty\,.
$$
\end{definition}
Besov spaces are interpolation spaces between Sobolev spaces. In fact, for any $k\in\N$ and~$p\in[1,+\infty]$
we have the chain of continuous embeddings $ B^k_{p,1}\hookrightarrow W^{k,p}\hookrightarrow B^k_{p,\infty}$,
which, when $1<p<+\infty$, can be refined to
$$
 B^k_{p, \min (p, 2)}\hookrightarrow W^{k,p}\hookrightarrow B^k_{p, \max(p, 2)}\,.
$$
In particular, for all $s\in\R$ we deduce that $B^s_{2,2}\equiv H^s$, with equivalence of norms:
\begin{equation} \label{eq:LP-Sob}
\|f\|_{H^s}\,\sim\,\left(\sum_{j\geq-1}2^{2 j s}\,\|\Delta_jf\|^2_{L^2}\right)^{\!\!1/2}\,.
\end{equation}

As an immediate consequence of the first Bernstein inequality, one gets the following embedding result, which generalises the Sobolev embeddings.
\begin{proposition}\label{p:embed}
The space $B^{s_1}_{p_1,r_1}$ is continuously embedded in the space $B^{s_2}_{p_2,r_2}$ for all indices satisfying $p_1\,\leq\,p_2$ and either
$s_2\,<\,s_1-d\big(1/p_1-1/p_2\big)$, or $s_2\,=\,s_1-d\big(1/p_1-1/p_2\big)$ and $r_1\leq r_2$.
\end{proposition}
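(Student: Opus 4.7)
\medskip

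\noindent\textbf{Proof plan for Proposition \ref{p:embed}.}

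The plan is to reduce the statement to a dyadic Bernstein estimate at each frequency, and then to convert the resulting pointwise (in $j$) bound into an $\ell^r$ statement. First, I would take $u\in\mathcal{S}'$ and, for each $j\geq -1$, apply the first Bernstein inequality from Lemma \ref{l:bern} to the block $\Delta_j u$. For $j\geq 0$, the Fourier support of $\Delta_j u$ is contained in an annulus of size $\sim 2^j$, while for $j=-1$ it is contained in $B(0,2)$; since $p_1\leq p_2$, Bernstein yields
\begin{equation*}
\|\Delta_j u\|_{L^{p_2}}\,\leq\, C\,2^{jd\left(\frac{1}{p_1}-\frac{1}{p_2}\right)}\,\|\Delta_j u\|_{L^{p_1}}\qquad\text{for all }j\geq -1,
\end{equation*}
with a harmless constant for $j=-1$. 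Setting $\alpha:=s_1-s_2-d(1/p_1-1/p_2)\geq 0$ and multiplying by $2^{js_2}$, this rewrites as
\begin{equation*}
2^{js_2}\,\|\Delta_j u\|_{L^{p_2}}\,\leq\, C\,2^{-j\alpha}\,\Big(2^{js_1}\|\Delta_j u\|_{L^{p_1}}\Big).
\end{equation*}

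Next, I would take the $\ell^{r_2}$ norm in $j$ of both sides and split into the two cases. In the \emph{strict} case $\alpha>0$, the sequence $(2^{-j\alpha})_{j\geq -1}$ belongs to $\ell^q$ for every $q\in[1,+\infty]$, so an application of H\"older's inequality on sequences with $\frac{1}{r_2}=\frac{1}{r_1}+\frac{1}{q}$ (choosing $q$ accordingly, and using the convention $\ell^\infty$ when $r_1>r_2$) gives
\begin{equation*}
\|u\|_{B^{s_2}_{p_2,r_2}}\,\leq\, C\,\bigl\|2^{-j\alpha}\bigr\|_{\ell^q}\,\|u\|_{B^{s_1}_{p_1,r_1}}\,\leq\, C'\,\|u\|_{B^{s_1}_{p_1,r_1}},
\end{equation*}
with no constraint on the pair $(r_1,r_2)$. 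In the \emph{endpoint} case $\alpha=0$, the pointwise inequality reduces to $2^{js_2}\|\Delta_j u\|_{L^{p_2}}\leq C\, 2^{js_1}\|\Delta_j u\|_{L^{p_1}}$ and there is no decaying weight available; here I would invoke the elementary inclusion $\ell^{r_1}\hookrightarrow\ell^{r_2}$, which is exactly the content of the assumption $r_1\leq r_2$, to conclude that $\|u\|_{B^{s_2}_{p_2,r_2}}\leq C\,\|u\|_{B^{s_1}_{p_1,r_1}}$.

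There is no serious obstacle here; the only subtle point is the bookkeeping of the H\"older exponents in the strict case when $r_1>r_2$ (one chooses $1/q=1/r_2-1/r_1>0$, and the geometric series $\sum 2^{-j\alpha q}$ converges thanks to $\alpha>0$), and the fact that the low-frequency block $\Delta_{-1}u$ has to be treated separately but satisfies the same inequality with an $O(1)$ constant since its spectrum lies in a fixed ball. Putting the two cases together yields the continuous embedding claimed in the proposition.
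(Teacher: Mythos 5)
Your argument is correct and matches what the paper has in mind: the paper offers no explicit proof, presenting the proposition as ``an immediate consequence of the first Bernstein inequality,'' which is exactly your Bernstein-plus-$\ell^r$ bookkeeping route. The only slip is in the parenthetical on the H\"older exponents in the strict case $\alpha>0$: the convention ``$\ell^\infty$ when $r_1>r_2$'' is reversed, since $1/q=1/r_2-1/r_1>0$ requires $r_1>r_2$ (so $q<\infty$ there), and when $r_1\leq r_2$ one instead places the decaying weight $(2^{-j\alpha})_j$ in $\ell^{r_2}$ and the Besov sequence in $\ell^\infty\supset\ell^{r_1}$, i.e.\ the $q=r_2$ version of the same split.
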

In particular, we get the following chain of continuous embeddings:
$$ B^s_{p,r}\hookrightarrow B^{s-d/p}_{\infty,r}\hookrightarrow B^0_{\infty,1}\hookrightarrow L^\infty \, , $$
whenever the triplet $(s,p,r)\in \R\times [1,+\infty]^2$ satisfies 
\begin{equation}\label{cond_algebra}
s>\frac{d}{p} \quad \quad \quad \text{or}\quad \quad \quad s=\frac{d}{p} \quad \text{and}\quad r=1\, .
\end{equation}

\subsection{Paradifferential calculus}\label{app_paradiff}
Let us now introduce the Bony decomposition (see \cite{Bony}). Once again, we refer to Chapter 2 of \cite{B-C-D} for details. Formally, the product of two tempered distributions $u$ and $v$ can be decomposed into
$$ u\,v=\mathcal{T}_uv+\mathcal{T}_vu+\mathcal{R}(u,v)\, , $$
where the \textit{paraproduct} $\mathcal{T}$ and the \textit{remainder} $\mathcal{R}$ are defined as follows: 
$$ \mathcal{T}_uv=\sum_jS_{j-1}u\, \Delta_jv \quad \text{and}\quad  \mathcal{R} (u,v):= \sum_j\sum_{|k-j|}\Delta_ju\, \Delta_kv\, . $$
The paraproduct and remainder operators have nice continuity properties. The following ones will be of constant use in the paper.
\begin{proposition}\label{T-R}
For any $(s,p,r)\in \R \times [1, +\infty ]^2$ and $t>0$, the paraproduct operator $\mathcal{T}$ maps continuously $L^\infty \times B^s_{p,r}$ into $B^s_{p,r}$ and $B^{-t}_{\infty,\infty}\times B^s_{p,r}$ into $B^{s-t}_{p,r}$. Moreover, we have the following estimates
\begin{equation*}
\|\mathcal{T}_u v\|_{B^s_{p,r}}\leq C\|u\|_{L^\infty} \|\nabla v\|_{B^{s-1}_{p,r}} \quad \text{and}\quad \|\mathcal{T}_u v\|_{B^{s-t}_{p,r}}\leq C\|u\|_{B^{-t}_{\infty,\infty}} \|\nabla v\|_{B^{s-1}_{p,r}}\, .
\end{equation*} 
For any $(s_1,p_1,r_1)$ and $(s_2,p_2,r_2)$ in $\R \times [1, +\infty ]^2$ such that $s_1+s_2>0$, $1/p:=1/p_1+1/p_2\leq 1$ and $1/r:=1/r_1+1/r_2\leq 1$, the remainder operator $\mathcal{R}$ maps continuously $B^{s_1}_{p_1,r_1}\times B^{s_2}_{p_2,r_2}$ into $B^{s_1+s_2}_{p,r}$. In the case when $s_1+s_2=0$ and $1/r_1+1/r_2=1$, the operator $\mathcal{R}$ is continuous from $B^{s_1}_{p_1,r_1}\times B^{s_2}_{p_2,r_2}$ to $B^{0}_{p,\infty}$.  
\end{proposition}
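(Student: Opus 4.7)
My plan is to exploit the spectral localization of each piece of the Bony decomposition and then characterize the Besov norms through their dyadic blocks: if $\sum_j u_j$ has each $u_j$ spectrally supported in a dyadic annulus of size $\sim 2^j$, then $\|\sum_j u_j\|_{B^s_{p,r}}\sim \bigl\|(2^{js}\|u_j\|_{L^p})_j\bigr\|_{\ell^r}$ for every $s\in\R$, whereas if each $u_j$ is only localized in a ball of radius $\lesssim 2^j$, the analogous bound requires $s>0$. For the first paraproduct estimate I would observe that $S_{j-1}u\,\Delta_j v$ has Fourier support in an annulus of radius $\sim 2^j$ (since $\widehat{S_{j-1}u}$ lives in a much smaller ball than $\widehat{\Delta_j v}$); combined with $\|S_{j-1}u\|_{L^\infty}\leq \|K_0\|_{L^1}\|u\|_{L^\infty}$, this yields $\|S_{j-1}u\,\Delta_j v\|_{L^p}\lesssim \|u\|_{L^\infty}\|\Delta_j v\|_{L^p}$, from which the continuity $L^\infty\times B^s_{p,r}\to B^s_{p,r}$ follows at once. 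The sharper form with $\|\nabla v\|_{B^{s-1}_{p,r}}$ comes from Bernstein's inequality, namely $\|\Delta_j v\|_{L^p}\sim 2^{-j}\|\Delta_j\nabla v\|_{L^p}$ for $j\geq 0$, after handling $\Delta_{-1}v$ separately.

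For the second paraproduct estimate with $u\in B^{-t}_{\infty,\infty}$ and $t>0$, I would refine the bound on the low-frequency cut-off via Bernstein:
$$
\|S_{j-1}u\|_{L^\infty}\leq \sum_{k\leq j-2}\|\Delta_k u\|_{L^\infty}\leq \sum_{k\leq j-2}2^{kt}\|u\|_{B^{-t}_{\infty,\infty}}\lesssim 2^{jt}\|u\|_{B^{-t}_{\infty,\infty}},
$$
whose convergence crucially exploits $t>0$. This produces the extra factor $2^{jt}$, which accounts precisely for the loss of $t$ derivatives in the target space, and the rest of the argument proceeds as before.

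For the remainder $\mathcal{R}(u,v)=\sum_j\sum_{|\ell|\leq 1}\Delta_j u\,\Delta_{j+\ell}v$, each summand is spectrally localized in a \emph{ball} of radius $\sim 2^j$ rather than an annulus, so $\Delta_k\mathcal{R}(u,v)$ picks up contributions from every index $j\geq k-C$. I would estimate each $\|\Delta_j u\,\Delta_{j+\ell}v\|_{L^p}$ via H\"older with $1/p=1/p_1+1/p_2$, extract the Besov sequences $c_j\,2^{-js_1}\|u\|_{B^{s_1}_{p_1,r_1}}$ and $c_{j+\ell}\,2^{-js_2}\|v\|_{B^{s_2}_{p_2,r_2}}$, multiply by $2^{k(s_1+s_2)}$, and rewrite $m=j-k$; the geometric prefactor $\sum_{m\geq -C}2^{-m(s_1+s_2)}$ converges precisely when $s_1+s_2>0$, after which discrete Young's inequality delivers the claimed $\ell^r$ bound with $1/r=1/r_1+1/r_2$. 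In the borderline case $s_1+s_2=0$ together with $1/r_1+1/r_2=1$, one instead applies H\"older for sequences, which yields only a uniform bound in $k$ and hence the $B^{0}_{p,\infty}$ output. The main obstacle I anticipate is the careful index bookkeeping in the remainder step, in particular verifying that the discrete convolutions produce an $\ell^r$ sequence across all the mixed-integrability regimes and borderline exponents; the overall strategy is otherwise a textbook application of Bony's paradifferential machinery, each ingredient of which reduces to Bernstein plus H\"older.
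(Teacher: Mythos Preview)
Your proposal is correct and is precisely the standard argument from Chapter~2 of \cite{B-C-D}, which is exactly what the paper does: it states the proposition without proof and simply refers the reader to that monograph. One small simplification: in the non-homogeneous Littlewood--Paley setup used here one has $S_{j-1}=0$ for $j\leq 0$, so the paraproduct $\mathcal{T}_u v=\sum_j S_{j-1}u\,\Delta_j v$ only involves $\Delta_j v$ with $j\geq 1$, and there is no need to ``handle $\Delta_{-1}v$ separately'' for the $\|\nabla v\|_{B^{s-1}_{p,r}}$ refinement.
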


As a consequence of the Proposition \ref{T-R}, the spaces $B^s_{p ,r}$ are Banach algebras provided condition \eqref{cond_algebra} holds for $s>0$. Moreover, in that case, we have the so-called \textit{tame estimates}. 

\begin{corollary}\label{cor:tame_est}
Let $(s,p,r)\in \R \times [1,+\infty ]^2$. Then, for every $f,g\in L^\infty \cap B^s_{p ,r}$, one has 
\begin{equation*}
\|fg\|_{B^s_{p ,r}}\leq C \left(\|f\|_{L^\infty}\|g\|_{B^s_{p ,r}}+\|f\|_{B^s_{p ,r}}\|g\|_{L^\infty}\right)\, .
\end{equation*}
\end{corollary}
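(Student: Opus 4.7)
The plan is to derive the tame estimate from Bony's decomposition
$$fg \,=\, \mathcal{T}_f g \,+\, \mathcal{T}_g f \,+\, \mathcal{R}(f,g)\,,$$
treating each of the three pieces separately using the continuity statements of Proposition \ref{T-R}. Throughout, we implicitly use the algebra condition \eqref{cond_algebra} (which is what makes the statement meaningful).

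For the two paraproducts, I would apply directly the first bound of Proposition \ref{T-R}. Since $f \in L^\infty$, one gets
$\|\mathcal{T}_f g\|_{B^{s}_{p,r}} \leq C\,\|f\|_{L^\infty}\,\|\nabla g\|_{B^{s-1}_{p,r}} \leq C\,\|f\|_{L^\infty}\,\|g\|_{B^s_{p,r}}$,
where the last inequality follows from the definition of Besov norms and the Bernstein inequality of Lemma \ref{l:bern}. The symmetric treatment, swapping the roles of $f$ and $g$, yields
$\|\mathcal{T}_g f\|_{B^{s}_{p,r}} \leq C\,\|g\|_{L^\infty}\,\|f\|_{B^s_{p,r}}$.

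For the remainder, I would invoke the second half of Proposition \ref{T-R}, viewing $f \in L^\infty \hookrightarrow B^{0}_{\infty,\infty}$ and $g \in B^{s}_{p,r}$. With the choice $(s_1,p_1,r_1) = (0,\infty,\infty)$ and $(s_2,p_2,r_2) = (s,p,r)$, the constraints $1/p_1 + 1/p_2 = 1/p \leq 1$ and $1/r_1 + 1/r_2 = 1/r \leq 1$ are automatic, while $s_1 + s_2 = s > 0$ holds under \eqref{cond_algebra} whenever we are not in the borderline case $p=\infty$, $s=0$, $r=1$. In the generic range this yields $\|\mathcal{R}(f,g)\|_{B^{s}_{p,r}} \leq C\,\|f\|_{L^\infty}\,\|g\|_{B^{s}_{p,r}}$; in the borderline case one instead appeals to the $s_1 + s_2 = 0$ clause of Proposition \ref{T-R} to control $\mathcal{R}(f,g)$ in $B^{0}_{\infty,\infty}$ and then upgrades to $B^{0}_{\infty,1}$ using the summability of the dyadic blocks allowed by $r=1$ on $g$. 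Summing the three contributions gives the claimed bound.

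The only genuinely delicate point is the edge case mentioned above, where the $s_1+s_2>0$ hypothesis of the remainder estimate just fails and one must rely on the weaker continuity of $\mathcal{R}$; everything else is a direct plug-in into Proposition \ref{T-R}. Since the paraproducts already produce both $\|f\|_{L^\infty}\|g\|_{B^s_{p,r}}$ and $\|g\|_{L^\infty}\|f\|_{B^s_{p,r}}$, one gets the symmetric form of the tame inequality for free without having to symmetrize the remainder.
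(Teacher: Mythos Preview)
Your approach is exactly the one the paper intends, and it mirrors the explicit proof given for Proposition \ref{prop:app_fine_tame_est} immediately afterwards: Bony decomposition followed by the continuity bounds of Proposition \ref{T-R}. For $s>0$ your argument is correct and complete.

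However, your treatment of the borderline case $(s,p,r)=(0,\infty,1)$ is flawed. The $s_1+s_2=0$ clause of Proposition \ref{T-R} only places $\mathcal{R}(f,g)$ in $B^0_{\infty,\infty}$, and there is no mechanism to ``upgrade'' this to $B^0_{\infty,1}$: the inclusion $B^0_{\infty,1}\hookrightarrow B^0_{\infty,\infty}$ is strict, and $\ell^1$-summability of the dyadic blocks of $g$ alone does not force $\ell^1$-summability of the blocks of $\mathcal{R}(f,g)$. In fact, the paper explicitly records in the Remark right after the corollary that $B^0_{\infty,1}$ is \emph{not} an algebra, precisely because the remainder cannot be controlled there. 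The corollary is meant to be read under the standing hypothesis $s>0$ (as the sentence preceding it indicates), so you should simply exclude this endpoint rather than attempt to cover it.
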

\begin{remark}
The space $B^0_{\infty ,1}$ is \textit{not} an algebra. If $f,g \in B^0_{\infty ,1}$, applying Proposition \ref{T-R}, one can bound the paraproducts $\mathcal{T}_fg$ and $\mathcal{T}_gf$ but \textit{not} the remainder $\mathcal{R} (f,g)$.
\end{remark}

To end this paragraph, we present a fine estimate for products in which one of the two functions is only bounded but its gradient belongs to the Besov space $B^{s-1}_{p,r}$. 
\begin{proposition}\label{prop:app_fine_tame_est}
Let $(s,p,r)\in\; ]0,+\infty[\; \times \, [1,+\infty]^2$ satisfies condition \eqref{cond_algebra}. Assume that $g\in L^\infty \cap B^{s}_{p,r}$ and $f$ is a bounded function such that $\nabla f\in B^{s-1}_{p,r}$. Then, the product $fg$ belongs to $L^\infty \cap B^s_{p,r}$ and one has the following estimate:
\begin{equation*}
\|fg\|_{B^s_{p,r}}\leq C\left(\|f\|_{L^\infty}\|g\|_{B^s_{p,r}}+\|\nabla f\|_{B^{s-1}_{p,r}}\|g\|_{L^\infty}\right)\, .
\end{equation*}
\end{proposition}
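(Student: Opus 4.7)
The plan is to use Bony's decomposition $fg = \mathcal{T}_f g + \mathcal{T}_g f + \mathcal{R}(f,g)$ and estimate each piece using Proposition \ref{T-R}. The $L^\infty$ bound is trivial. The central technical trick, needed because $f$ itself has no positive regularity while only $\nabla f$ does, will be to trade a derivative on $f$ for a Bernstein-type factor $2^{-j}$ on its dyadic blocks. The main obstacle will be the remainder $\mathcal{R}(f,g)$, since the standard remainder estimate in Proposition \ref{T-R} requires a strictly positive sum of regularity indices.

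For the first paraproduct, Proposition \ref{T-R} gives at once $\|\mathcal{T}_f g\|_{B^s_{p,r}}\leq C\|f\|_{L^\infty}\|g\|_{B^s_{p,r}}$. For the second paraproduct $\mathcal{T}_g f = \sum_{j\geq 1} S_{j-1}g\,\Delta_j f$ (the sum starts at $j=1$ since $S_{j-1}=0$ for $j\leq 0$), every summand is spectrally localised at frequency $\sim 2^j$ with $j\geq 1$, so Bernstein (Lemma \ref{l:bern}) provides the key bound $\|\Delta_j f\|_{L^p}\leq C\,2^{-j}\|\Delta_j \nabla f\|_{L^p}$. Combining this with $\|S_{j-1}g\|_{L^\infty}\leq C\|g\|_{L^\infty}$, multiplying by $2^{js}$ and taking the $\ell^r$ norm in $j$ yields
$$ \|\mathcal{T}_g f\|_{B^s_{p,r}} \,\leq\, C\,\|g\|_{L^\infty}\,\|\nabla f\|_{B^{s-1}_{p,r}}.$$

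For the remainder, I will split $\mathcal{R}(f,g) = \mathcal{R}_L + \mathcal{R}_H$, where $\mathcal{R}_L$ collects the terms involving $\Delta_{-1}f$ and $\mathcal{R}_H = \sum_{j\geq 0}\Delta_j f \sum_{|k-j|\leq 1}\Delta_k g$. Since $\mathcal{R}_L$ has Fourier support in a fixed ball, its $B^s_{p,r}$ norm reduces (via Bernstein) to its $L^p$ norm, which is controlled by $\|\Delta_{-1}f\|_{L^\infty}\sum_{|k|\leq 1}\|\Delta_{k-1}g\|_{L^p}\leq C\|f\|_{L^\infty}\|g\|_{B^s_{p,r}}$. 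For $\mathcal{R}_H$, each block $\Delta_j f$ with $j\geq 0$ is again annulus-localised, and Bernstein yields once more $\|\Delta_j f\|_{L^p}\leq C\,2^{-j}\|\Delta_j\nabla f\|_{L^p}$. Using $\|\Delta_k g\|_{L^\infty}\leq C\|g\|_{L^\infty}$ and the standard fact that $\Delta_{j'}\mathcal{R}_H$ receives contributions only from indices $j\geq j'-N_0$ (for some fixed $N_0$ depending on the partition of unity), we obtain
$$ 2^{j's}\|\Delta_{j'}\mathcal{R}_H\|_{L^p} \,\leq\, C\,\|g\|_{L^\infty}\,\|\nabla f\|_{B^{s-1}_{p,r}}\sum_{j\geq \max(j'-N_0,0)}2^{(j'-j)s}\,c_j,$$
for some sequence $(c_j)$ in the unit ball of $\ell^r$. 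Since $s>0$ and the sum is restricted to $j\geq j'-N_0$ (so $j'-j\leq N_0$), the kernel $2^{(j'-j)s}$ has bounded row and column sums; a discrete Young/Schur argument then closes the estimate as $\|\mathcal{R}_H\|_{B^s_{p,r}}\leq C\|g\|_{L^\infty}\|\nabla f\|_{B^{s-1}_{p,r}}$. Summing the three contributions yields the claimed inequality; the crucial point, and the reason Corollary \ref{cor:tame_est} cannot be invoked directly, is that the derivative trade on $f$ via Bernstein is only available on spectrally annular frequencies $j\geq 0$, which forces the preliminary extraction of the low-frequency block $\Delta_{-1}f$.
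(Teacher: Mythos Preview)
Your proof is correct and follows the same Bony-decomposition strategy as the paper, but you work harder than necessary on two of the three pieces. For $\mathcal{T}_g f$, the first estimate in Proposition~\ref{T-R} already reads $\|\mathcal{T}_u v\|_{B^s_{p,r}}\leq C\|u\|_{L^\infty}\|\nabla v\|_{B^{s-1}_{p,r}}$, so with $u=g$, $v=f$ you get the bound in one line without redoing the Bernstein argument by hand. More importantly, your claim that the remainder is ``the main obstacle'' because Proposition~\ref{T-R} ``requires a strictly positive sum of regularity indices'' overlooks that this condition \emph{is} met: since $f\in L^\infty\hookrightarrow B^{0}_{\infty,\infty}$ and $g\in B^s_{p,r}$ with $s>0$, one may take $s_1=0$, $s_2=s$, $p_1=r_1=\infty$, $p_2=p$, $r_2=r$ in Proposition~\ref{T-R} and obtain directly
\[
\|\mathcal{R}(f,g)\|_{B^s_{p,r}}\;\leq\; C\,\|f\|_{B^{0}_{\infty,\infty}}\,\|g\|_{B^s_{p,r}}\;\leq\; C\,\|f\|_{L^\infty}\,\|g\|_{B^s_{p,r}}\,,
\]
which is exactly what the paper does. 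Your low/high splitting $\mathcal{R}_L+\mathcal{R}_H$ and the subsequent Schur-kernel computation are valid (and yield the alternative bound $C\|g\|_{L^\infty}\|\nabla f\|_{B^{s-1}_{p,r}}$ for the high part), but they amount to re-proving a special case of the remainder estimate that you already have available.
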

\begin{proof}
Taking advantage of Bony decomposition, one can write 
\begin{equation*}
fg =\mathcal{T}_{f}g +\mathcal{T}_{g }f + \mathcal{R}(f, g )
\end{equation*}
and employing Proposition \ref{T-R}, we get
\begin{equation*}
\begin{split}
\|\mathcal{T}_{f}g\|_{B^s_{p,r}}&\leq C\|f\|_{L^\infty}\|g \|_{B^s_{p,r}}\\
\|\mathcal{T}_{g }\, f\|_{B^s_{p,r}}&\leq C\|g \|_{L^\infty}\|\nabla f\|_{B^{s-1}_{p,r}}\\
\|\mathcal{R}(f ,g)\|_{B^s_{p,r}}&\leq C\|f\|_{B^0_{\infty,\infty}}\|g \|_{B^s_{p,r}}\leq C\|f\|_{L^\infty}\|g  \|_{B^s_{p,r}}\, .
\end{split}
\end{equation*}
This completes the proof of the proposition.
\end{proof}

\subsection{Commutator estimates}\label{app:comm_est}
In this paragraph, we recall the main commutator estimates widely employed throughout the paper (we refer to Chapter 2 of \cite{B-C-D} for full details). 
\begin{definition}
We say that the triplet $(s,p,r)\in \R \times [1, +\infty ]^2$ satisfies the Lipschitz condition if 
\begin{equation} \label{Lip_cond}
s>1+d/p \quad \text{and}\quad r\in [1,+\infty ]\quad \quad \quad \text{or}\quad \quad \quad s=1+d/p \quad \text{and}\quad r=1\, .
\end{equation}
\end{definition}

\smallskip

The proof of the following Lemma \ref{l:commutator_pressure} can be found in \cite{D_JDE} by Danchin. 

\begin{lemma}\label{l:commutator_pressure}
Let $(s,p,r)$ satisfy condition \eqref{Lip_cond} and $\sigma$ be in $]-1,\, s-1]$. Assume that $w\in B^{\sigma}_{p,r}$ and $A$ is a bounded function on $\R^d$ such that $\nabla A\in B^{s-1}_{p,r}$. Then, there exists a constant $C=C(s,p,r,\sigma ,d)$ such that for all $i\in \{1, \dots, d\}$, we have: 
\begin{equation*}
\|\d_i[A, \Delta_j]w\|_{L^p}\leq C\, c_j \, 2^{-j\sigma}\|\nabla A\|_{B^{s-1}_{p,r}}\|w\|_{B^{\sigma}_{p,r}}\quad \text{for all }j\geq -1\, ,
\end{equation*}
with $\|(c_j)_{j\geq -1}\|_{\ell^r}=1$.
\end{lemma}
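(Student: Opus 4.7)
The plan is to apply Bony's paraproduct decomposition to rewrite the commutator as a sum of three contributions, each controlled by a combination of spectral localization and a first-order Taylor expansion. Writing both $Aw$ and $A\,\Delta_j w$ as $\mathcal{T}_A(\cdot)+\mathcal{T}_{(\cdot)}A+\mathcal{R}(A,\cdot)$ and subtracting, one obtains
\[
[A,\Delta_j]w \;=\; \underbrace{\bigl(\mathcal{T}_A\Delta_j w - \Delta_j\mathcal{T}_A w\bigr)}_{I_j} + \underbrace{\bigl(\mathcal{T}_{\Delta_j w}A - \Delta_j\mathcal{T}_w A\bigr)}_{II_j} + \underbrace{\bigl(\mathcal{R}(A,\Delta_j w) - \Delta_j\mathcal{R}(A,w)\bigr)}_{III_j},
\]
and I would estimate each of $\partial_i I_j$, $\partial_i II_j$ and $\partial_i III_j$ separately.

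For $I_j$, which is the core piece, I would use that spectral localization of the paraproduct reduces the sum defining $\mathcal{T}_A$ to indices $k$ with $|k-j|\leq N_0$, so that matters reduce to bounding $\Delta_j(S_{k-1}A\,\Delta_k w) - S_{k-1}A\,\Delta_k\Delta_j w$ for each such $k$. Representing $\Delta_j$ as convolution against $h_j(y)=2^{jd}h(2^j y)$ and first-order Taylor-expanding $S_{k-1}A$ about $x$ factors out $(y-x)h_j(x-y)$, a kernel of $L^1$-norm $O(2^{-j})$; the remaining term $\nabla S_{k-1}A=S_{k-1}\nabla A$ is bounded in $L^\infty$ thanks to the embedding $B^{s-1}_{p,r}\hookrightarrow L^\infty$ provided by the Lipschitz condition \eqref{Lip_cond}. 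Applying $\partial_i$ adds a factor $2^j$ by Bernstein (Lemma~\ref{l:bern}), which exactly compensates the $2^{-j}$ gain from the Taylor step, so that plugging in $\|\Delta_k w\|_{L^p}\leq c_k\,2^{-k\sigma}\|w\|_{B^\sigma_{p,r}}$ and summing over the $O(1)$ indices $k\sim j$ delivers the stated bound on $\partial_i I_j$.

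For $II_j$ and $III_j$, I would expand the paraproduct $\mathcal{T}_w A=\sum_k S_{k-1}w\,\Delta_k A$ and the remainder $\mathcal{R}(A,w)=\sum_k\Delta_k A\,\widetilde{\Delta}_k w$ into dyadic pieces, and use the spectral support of $\Delta_j$ to restrict to a geometric tail of indices $k\geq j-N_0$. On each tail, Proposition~\ref{T-R} combined with H\"older's inequality and Bernstein will produce the required $2^{-j\sigma}$ decay together with a new summable sequence $(c_j)\in\ell^r$ of unit norm. The two-sided condition $\sigma\in\,]-1,s-1]$ should enter precisely here: the lower bound is needed for absolute convergence of the remainder tail (where one exploits that $\nabla A\in B^{s-1}_{p,r}$ with $s-1>0$), while the upper bound keeps the $\mathcal{T}_w A$-type term in the Besov range in which derivatives may be freely transferred from $A$ to $w$.

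The main obstacle I anticipate is the bookkeeping involved in $II_j$ and $III_j$: one must extract the correct $\ell^r$-summable sequence $(c_j)$ from geometric sums whose behavior depends on the sign of $\sigma$, with a borderline case near $\sigma=-1$ (excluded) where the tail would become logarithmically divergent. Gathering the three estimates for $\partial_i I_j$, $\partial_i II_j$, $\partial_i III_j$ then yields the conclusion.
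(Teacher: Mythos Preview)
The paper does not actually give its own proof of Lemma~\ref{l:commutator_pressure}: it simply states the result and refers to Danchin~\cite{D_JDE} for the proof. Your outline, based on Bony's decomposition of the commutator into the paraproduct commutator $I_j$, the reverse paraproduct piece $II_j$, and the remainder piece $III_j$, followed by a first-order Taylor/convolution argument for $I_j$ and geometric tail summation for $II_j$ and $III_j$, is precisely the standard route and is the one followed in~\cite{D_JDE}. Your identification of where the endpoints $\sigma>-1$ and $\sigma\leq s-1$ enter is also correct. So the proposal is sound and aligned with the cited proof; there is nothing to compare against in the present paper itself.
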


The next statement concerns a standard commutator estimate between the transport operator and the frequency localisation operator.  
\begin{lemma}\label{l:commutator_est}
Assume that $v\in B^s_{p ,r}$ with $(s,p,r)$ satisfying the Lipschitz condition \eqref{Lip_cond}. Denote by $[v\cdot \nabla ,\Delta_j]f=(v\cdot \nabla)\Delta_j f-\Delta_j(v\cdot \nabla) f$ the commutator between the transport operator $v\cdot \nabla$ and the frequency localisation operator $\Delta_j$. Then, for every $f\in B^{s}_{p,r}$, 
\begin{equation*}
\left\|\left(2^{js}\|[v\cdot \nabla ,\Delta_j]f\|_{L^p}\right)_j\right\|_{\ell^r}\leq C\left(\|\nabla v\|_{L^\infty}\|f\|_{B^{s}_{p ,r}}+\|\nabla v\|_{B^{s-1}_{p ,r}}\|\nabla f\|_{L^\infty}\right)
\end{equation*}
and also, for every $f\in B^{s-1}_{p ,r}$, 
\begin{equation*}
\left\|\left(2^{j(s-1)}\|[v\cdot \nabla ,\Delta_j]f\|_{L^p}\right)_j\right\|_{\ell^r}\leq C\left(\|\nabla v\|_{L^\infty}\|f\|_{B^{s-1}_{p ,r}}+\|\nabla v\|_{B^{s-1}_{p ,r}}\| f\|_{L^\infty}\right)\, ,
\end{equation*}
for some constant $C=C(d,s,p)>0$.
\end{lemma}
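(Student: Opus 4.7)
The plan is to apply Bony's paraproduct decomposition to $v^k\partial_k f$ component-wise (implicit summation over $k\in\{1,\dots,d\}$). Writing both $v^k\partial_k f$ and $v^k\partial_k\Delta_j f$ via Bony's formula and subtracting, the commutator splits as
\begin{equation*}
[v\cdot\nabla,\Delta_j]f \,=\, \underbrace{[\mathcal{T}_{v^k},\Delta_j]\partial_k f}_{(\mathrm{I})_j} \,+\, \underbrace{\mathcal{T}_{\partial_k\Delta_j f}v^k - \Delta_j\mathcal{T}_{\partial_k f}v^k}_{(\mathrm{II})_j} \,+\, \underbrace{\mathcal{R}(v^k,\partial_k\Delta_j f) - \Delta_j\mathcal{R}(v^k,\partial_k f)}_{(\mathrm{III})_j}.
\end{equation*}
The piece $(\mathrm{I})_j$ is the ``genuine'' paracommutator, while $(\mathrm{II})_j$ and $(\mathrm{III})_j$ are remainder-type terms that benefit from the off-diagonal cancellation between $\Delta_j$ and the paraproduct/remainder.

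For $(\mathrm{I})_j$, the quasi-orthogonality of Littlewood-Paley blocks reduces matters to the finite sum $\sum_{|j'-j|\le N_0}[S_{j'-1}v^k,\Delta_j]\Delta_{j'}\partial_k f$. Writing $\Delta_j g(x)=2^{jd}\int h(2^j(x-y))g(y)\,dy$ with $h=\mathcal{F}^{-1}\varphi$ and using a first-order Taylor expansion of $S_{j'-1}v^k$ at $x$ (the zeroth-order contribution vanishes because $\int h=0$), one obtains the pointwise bound
\begin{equation*}
\|[S_{j'-1}v^k,\Delta_j]\Delta_{j'}\partial_k f\|_{L^p} \,\le\, C\,2^{-j}\,\|\nabla S_{j'-1}v^k\|_{L^\infty}\,\|\Delta_{j'}\partial_k f\|_{L^p}.
\end{equation*}
This is the heart of the argument: the factor $2^{-j}$ absorbs the derivative falling on $f$, and only $\|\nabla v\|_{L^\infty}$ (never $\|v\|_{L^\infty}$) appears. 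Multiplying by $2^{js}$, applying Bernstein's inequality (Lemma \ref{l:bern}) to $\Delta_{j'}\partial_k f$, and taking the $\ell^r$-norm in $j$ together with Young's inequality on the finite sum over $j'$ yields a contribution $\le C\,\|\nabla v\|_{L^\infty}\,\|f\|_{B^s_{p,r}}$.

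The terms $(\mathrm{II})_j$ and $(\mathrm{III})_j$ are controlled by the continuity of the paraproduct and the remainder stated in Proposition \ref{T-R}, combined with the embedding $B^{s-1}_{p,r}\hookrightarrow L^\infty$ forced by \eqref{Lip_cond}. For $(\mathrm{II})_j$ one places $\partial_k f$ in $L^\infty$ and $v^k$ in $B^s_{p,r}$; for $(\mathrm{III})_j$ the Lipschitz condition ensures $s>0$, so that the remainder maps $L^\infty\times B^{s-1}_{p,r}$ into $B^{s-1}_{p,r}$. Since only finitely many shells $j'\sim j$ contribute to each of $(\mathrm{II})_j,(\mathrm{III})_j$, the extra $\Delta_j$ on the right entails no loss, and after weighting by $2^{js}$ and taking the $\ell^r$-norm both produce a bound $\le C\,\|\nabla v\|_{B^{s-1}_{p,r}}\,\|\nabla f\|_{L^\infty}$. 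This proves the first inequality. The second one follows by the same scheme, with $s$ replaced by $s-1$ and Proposition \ref{prop:app_fine_tame_est} invoked in place of the naive tame bound so that the derivative arising in $(\mathrm{II})_j$ and $(\mathrm{III})_j$ is paid by $\nabla v\in B^{s-1}_{p,r}$ rather than by $f$, producing the desired $\|\nabla v\|_{B^{s-1}_{p,r}}\,\|f\|_{L^\infty}$ term.

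The main technical obstacle I anticipate lies entirely in $(\mathrm{I})_j$: one must handle the kernel $h$ and its first moment precisely, so that exactly one derivative falls on $v$ (and nothing remains on $f$ beyond the original $\partial_k$). Everything else is an application of the abstract paraproduct/remainder continuity. The shift $s\mapsto s-1$ in the proof of the second inequality is the other bookkeeping point that requires care, as it is responsible for interchanging the roles of $L^\infty$ and $B^{s-1}_{p,r}$ between $f$ and $v$ in the estimates of $(\mathrm{II})_j,(\mathrm{III})_j$.
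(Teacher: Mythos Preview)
The paper does not supply a proof of this lemma: it is stated in the appendix with the remark ``we refer to Chapter 2 of \cite{B-C-D} for full details.'' Your sketch is precisely the classical argument found there (essentially Lemma~2.100 of \cite{B-C-D}): Bony-decompose $v^k\partial_k f$, treat the paracommutator $[\mathcal T_{v^k},\Delta_j]\partial_k f$ via the first-order Taylor/kernel bound to extract the factor $2^{-j}\|\nabla v\|_{L^\infty}$, and handle the remaining paraproduct/remainder pieces by Proposition~\ref{T-R}. So your approach matches the reference the paper defers to.

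Two small points of care. First, in $(\mathrm{I})_j$ the zeroth-order Taylor term vanishes because a constant commutes with $\Delta_j$, not because $\int h=0$ (for $j=-1$ one has $\int h=\widehat h(0)=\chi(0)=1$); the conclusion is unaffected. Second, when you say ``the Lipschitz condition ensures $s>0$, so that the remainder maps $L^\infty\times B^{s-1}_{p,r}$ into $B^{s-1}_{p,r}$'', the actual requirement from Proposition~\ref{T-R} is $0+(s-1)>0$, i.e.\ $s>1$; this is still guaranteed by \eqref{Lip_cond} except at the endpoint $p=+\infty$, $s=1$, $r=1$, where one bounds $\Delta_j\mathcal R(v^k,\partial_k f)$ directly from its dyadic expansion $\sum_{j'\ge j-N_0}\Delta_{j'}v^k\,\widetilde\Delta_{j'}\partial_k f$ rather than via the abstract continuity statement. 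In both $(\mathrm{II})_j$ and $(\mathrm{III})_j$ one also uses Bernstein (Lemma~\ref{l:bern}) on the high-frequency blocks $\Delta_{j'}v$ with $j'\ge0$ to trade $\|\Delta_{j'}v\|_{L^p}$ for $2^{-j'}\|\Delta_{j'}\nabla v\|_{L^p}$, which is what produces $\|\nabla v\|_{B^{s-1}_{p,r}}$ rather than $\|v\|_{B^{s}_{p,r}}$ in the final bound; you allude to this but it is worth making explicit.
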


Finally, the next result deals with commutators between paraproduct operators and Fourier multipliers.
\begin{lemma}
Let $\kappa$ be a smooth function on $\R^d$, which is homogeneous of degree $m$ away from a neighborhood of $0$. Take $(s,p,r)\in \R \times [1,+\infty]^2$ and $v$ a vector field such that $\nabla v\in L^\infty$. Then, for every $f\in B^s_{p ,r}$, one has
\begin{equation*}
\left\|[T_v,\kappa (D)]f\right\|_{B^{s-m+1}_{p ,r}}\leq C(d,s)\, \|\nabla v\|_{L^\infty}\|f\|_{B^s_{p ,r}}\, .
\end{equation*}
\end{lemma}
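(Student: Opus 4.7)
The plan is to expand the commutator block-by-block via the paraproduct definition and reduce the estimate to a kernel-level bound on each dyadic piece. Starting from $T_v f = \sum_{j\geq -1} S_{j-1} v \, \Delta_j f$ and the linearity of the Fourier multiplier, one writes
\[
[T_v,\kappa(D)] f \,=\, \sum_{j\geq -1} \bigl[ S_{j-1} v,\, \kappa(D) \bigr] \Delta_j f .
\]
Let $\wtilde\varphi$ be an auxiliary smooth cut-off equal to $1$ on $\Supp \varphi$ and supported in a slightly larger annulus away from the origin. Since $\Delta_j f = \wtilde\varphi(2^{-j} D) \Delta_j f$ for $j \geq 0$, we can freely replace $\kappa(D)$ by the truncated symbol $\kappa_j(\xi) := \wtilde\varphi(2^{-j}\xi)\,\kappa(\xi)$ in the $j$-th term. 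The homogeneity of $\kappa$ of degree $m$ away from $0$ yields $\kappa_j(\xi) = 2^{jm}\, \wtilde\kappa(2^{-j}\xi)$, where $\wtilde\kappa := \wtilde\varphi\cdot \kappa$ is a fixed Schwartz symbol supported in an annulus. Consequently $\kappa_j(D)$ is convolution with the kernel $h_j(z) = 2^{j(m+d)} h(2^j z)$, for $h := \mc F^{-1} \wtilde\kappa$ a fixed Schwartz function.

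The heart of the argument is then to rewrite each commutator as a first-order difference against this kernel:
\[
\bigl[ S_{j-1} v,\, \kappa_j(D) \bigr] \Delta_j f (x) \,=\, \int_{\R^d} h_j(x-y)\, \bigl( S_{j-1} v(x) - S_{j-1} v(y) \bigr)\, \Delta_j f(y) \, dy .
\]
I would then apply the mean value inequality $|S_{j-1}v(x) - S_{j-1}v(y)| \leq \|\nabla v\|_{L^\infty}\, |x-y|$ together with Young's convolution inequality, producing
\[
\bigl\| [ S_{j-1} v,\, \kappa_j(D) ] \Delta_j f \bigr\|_{L^p} \,\leq\, \|\nabla v\|_{L^\infty}\, \bigl\| |\cdot|\, h_j \bigr\|_{L^1}\, \|\Delta_j f\|_{L^p} \,\leq\, C\, 2^{j(m-1)}\, \|\nabla v\|_{L^\infty}\, \|\Delta_j f\|_{L^p},
\]
where the scaling $2^{j(m-1)}$ follows by the change of variables $w = 2^j z$ in $\int |z|\, |h_j(z)|\,dz$. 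The low-frequency block $j=-1$ is handled separately by a direct boundedness argument, since both $S_{-2}v$ and $\Delta_{-1} f$ are spectrally localized in a fixed ball and no homogeneity of $\kappa$ near the origin is needed.

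To conclude, I would observe that the spectrum of $S_{j-1}v\,\Delta_j f$, and hence that of $[S_{j-1}v,\kappa(D)]\Delta_j f$, sits in a dyadic annulus of size comparable to $2^j$. By almost-orthogonality, $\Delta_k [T_v,\kappa(D)]f$ receives non-trivial contributions only from finitely many indices $j\sim k$; multiplying the previous bound by $2^{k(s-m+1)}$ and taking the $\ell^r$ norm gives
\[
\bigl\| [T_v,\kappa(D)] f \bigr\|_{B^{s-m+1}_{p,r}} \,\leq\, C\, \|\nabla v\|_{L^\infty}\, \bigl\|\bigl(2^{ks}\,\|\Delta_k f\|_{L^p}\bigr)_{k}\bigr\|_{\ell^r} \,=\, C\, \|\nabla v\|_{L^\infty}\, \|f\|_{B^s_{p,r}}.
\]
The main technical point is the kernel analysis: extracting exactly one derivative of $v$ (rather than zero) rests on writing the commutator as the first-order difference $S_{j-1}v(x) - S_{j-1}v(y)$ paired with $h_j$, while the homogeneity of $\kappa$ is precisely what guarantees that the weighted kernel norm $\bigl\||\cdot|\, h_j\bigr\|_{L^1}$ scales as $2^{j(m-1)}$, producing the sharp gain of one derivative in the output regularity.
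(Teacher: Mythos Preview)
The paper does not supply its own proof of this lemma; it is recorded in the appendix as a known tool, with the surrounding material referred to Chapter~2 of \cite{B-C-D}. Your argument is the standard one (essentially Lemma~2.99 in \cite{B-C-D}) and is correct in its overall structure: the block-by-block decomposition of the commutator, the kernel representation with the first-order difference $S_{j-1}v(x)-S_{j-1}v(y)$, and the scaling computation $\||\cdot|\,h_j\|_{L^1}=2^{j(m-1)}\||\cdot|\,h\|_{L^1}$ are all right, and the almost-orthogonality step is fine since $S_{j-1}v\,\Delta_j f$ is indeed spectrally supported in a fixed dilate of the annulus $2^j\mc C$.

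One technical point deserves tightening. You justify replacing $\kappa(D)$ by $\kappa_j(D)=\wtilde\varphi(2^{-j}D)\kappa(D)$ by invoking $\Delta_j f=\wtilde\varphi(2^{-j}D)\Delta_j f$, but this only handles the term $S_{j-1}v\cdot\kappa(D)\Delta_j f$. For the other half of the commutator, $\kappa(D)\bigl(S_{j-1}v\,\Delta_j f\bigr)$, you need $\wtilde\varphi(2^{-j}\cdot)\equiv 1$ on the Fourier support of the \emph{product} $S_{j-1}v\,\Delta_j f$, which lives in a genuinely larger annulus (roughly $\{c\,2^j\le|\xi|\le C\,2^j\}$ with $c<3/4$ and $C>8/3$). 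You clearly know this localisation, since you use it later for the almost-orthogonality; just enlarge the support of $\wtilde\varphi$ accordingly when you introduce it, so that the truncation is valid on both sides of the commutator. Similarly, for the finitely many small values of $j$ for which this annulus meets the neighbourhood of the origin where $\kappa$ is merely smooth (not homogeneous), a direct bound suffices; and with the paper's convention $S_j=0$ for $j<0$, the block $j=-1$ in fact contributes nothing.
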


\subsection{Transport equations}
In this paragraph, we deal with the transport equations in non-homogeneous Besov spaces. We refer to Chapter 3 of \cite{B-C-D} for additional details. We study, in $\R_+\times \R^d$, the initial value problem 
\begin{equation}\label{general_transport}
\begin{cases}
\d_t f+v\cdot \nabla f =g\\
f_{|t=0}=f_0\, .
\end{cases}
\end{equation}
We always assume the velocity field $v=v(t,x)$ to be a Lipschitz divergence-free function. 
In the case when the Lipschitz condition \eqref{Lip_cond} is satisfied, we have the embedding $B^s_{p ,r}\hookrightarrow W^{1,\infty}$.

We state now the main well-posedness result concerning problem \eqref{general_transport} in Besov spaces. We point out also that the notation $C^0_w([0,T]; X)$, with $X$ a Banach space, refers to the space of functions which are continuous in time with values in $X$ endowed with its weak topology.

\begin{theorem}\label{thm_transport}
Let $(s,p,r)\in \R\times [1,+\infty ]^2$ satisfies the Lipschitz condition \eqref{Lip_cond}. Given $T>0$, take $g\in L^1_T(B^s_{p ,r})$. Assume that $v\in L^1_T(B^s_{p ,r})$ and that there exist two real numbers $q>1$ and $\sigma >0$ such that $v\in L^q_T(B^{-\sigma}_{\infty ,\infty})$. Finally, let $f_0\in B^s_{p ,r}$ be the initial datum. Then, the transport equation \eqref{general_transport} has a unique solution $f$ in:
\begin{itemize}
\item the space $C^0\left([0,T];B^s_{p ,r}\right)$ if $r<+\infty$;
\item the space $\left(\displaystyle \bigcap_{s^\prime <s} C^0\left([0,T];B^{s^\prime}_{p ,\infty}\right)\right) \cap C^0_w\left([0,T];B^s_{p ,\infty}\right)$ if $r=+\infty$.
\end{itemize}
Moreover, the unique solution satisfies the following estimate:
\begin{equation*}
\|f\|_{L^\infty_T(B^s_{p ,r})}\leq \exp \left(C\int_0^T\|\nabla v\|_{B^{s-1}_{p ,r}}\, \detau \right)\left(\|f_0 \|_{B^{s}_{p ,r}}+\int_0^T\exp \left(-C\int_0^t\|\nabla v\|_{B^{s-1}_{p ,r}}\, \detau \right)\|g(t)\|_{B^{s}_{p ,r}}\, \dt\right),
\end{equation*}
for some constant $C=C(d,s,p,r)>0$.
\end{theorem}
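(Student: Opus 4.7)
The plan is to follow the classical Littlewood--Paley approach based on dyadic localisation, commutator estimates and a standard Friedrichs-type approximation scheme.

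\textbf{Step 1: a priori estimates.} First I would apply the dyadic block $\Delta_j$ to the equation \eqref{general_transport} and commute it with the transport operator $v\cdot\nabla$, obtaining
\begin{equation*}
\d_t\Delta_j f+v\cdot\nabla\Delta_j f \,=\, \Delta_j g+[v\cdot\nabla,\Delta_j]f\,.
\end{equation*}
Multiplying by $|\Delta_j f|^{p-2}\Delta_j f$ and integrating over $\R^d$, the transport term vanishes thanks to $\div v=0$, yielding
\begin{equation*}
\|\Delta_j f(t)\|_{L^p}\,\leq\,\|\Delta_j f_0\|_{L^p}+\int_0^t\Big(\|\Delta_j g\|_{L^p}+\|[v\cdot\nabla,\Delta_j]f\|_{L^p}\Big)\,\detau\,.
\end{equation*}
Then I would multiply by $2^{js}$, take the $\ell^r$ norm in $j$, and bound the commutator using the first estimate of Lemma \ref{l:commutator_est}. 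A Gr\"onwall argument applied to $t\mapsto\|f(t)\|_{B^s_{p,r}}$ would then give the claimed exponential-type estimate with the weight $\exp(C\int_0^t\|\nabla v\|_{B^{s-1}_{p,r}}\detau)$.

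\textbf{Step 2: existence via approximation.} I would construct a sequence $(f^n)_n$ of smooth approximate solutions by mollifying the data $(v^n,g^n,f_0^n):=(S_n v,S_n g,S_n f_0)$ and solving \eqref{general_transport} with these smooth inputs via the method of characteristics. Since $v^n$ stays divergence-free and uniformly bounded in $L^1_T(B^s_{p,r})$ by $\|v\|_{L^1_T(B^s_{p,r})}$, the a priori bound of Step 1 is uniform in $n$. Using the equation itself together with the product law of Proposition \ref{prop:app_fine_tame_est}, one also derives a uniform bound on $\d_t f^n$ in an auxiliary low-regularity space. Combining this with the assumption $v\in L^q_T(B^{-\sigma}_{\infty,\infty})$, a standard Ascoli--Arzel\`a argument provides compactness in some $C^0_T(B^{s'}_{p,r,\mathrm{loc}})$ with $s'<s$, sufficient to pass to the limit in the transport term. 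Finally, the Fatou property of Besov spaces upgrades the limit to $L^\infty_T(B^s_{p,r})$ with the same bound.

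\textbf{Step 3: uniqueness and time continuity.} Uniqueness follows by applying the a priori estimate of Step 1 to the difference of two solutions at level $s-1$ (so as to only use the information $\nabla v\in L^1_T(L^\infty)$ coming from \eqref{Lip_cond} via the embedding $B^s_{p,r}\hookrightarrow W^{1,\infty}$), which gives zero in the appropriate norm. For time continuity, in the case $r<+\infty$ I would mollify the solution in frequency: each $S_N f$ belongs to $C^0_T(B^s_{p,r})$, and $(S_N f)_N$ converges to $f$ in $L^\infty_T(B^s_{p,r})$ (this uses $r<+\infty$ in a crucial way, as $\ell^r$-tails are then controlled). In the case $r=+\infty$, this last convergence fails in the strong norm, which is why one only gets continuity in the weaker spaces $B^{s'}_{p,\infty}$ for $s'<s$ together with $C^0_w([0,T];B^s_{p,\infty})$; the weak continuity follows by duality after testing against a dense class of Schwartz functions.

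\textbf{Main obstacle.} The real analytic difficulty is isolated in the commutator estimate of Lemma \ref{l:commutator_est}, which is precisely what makes the Lipschitz condition \eqref{Lip_cond} necessary; once this is granted, the rest is a fairly mechanical combination of Gr\"onwall, approximation and compactness. The subtle point to watch out for is the endpoint case $s=1+d/p$, $r=1$, where one must be careful to use the $B^{s-1}_{p,1}$ norm (not merely $L^\infty$) in the commutator bound to keep control of $\nabla v$, and the case $r=+\infty$, where only weak-$\ast$ time continuity at top regularity can be expected.
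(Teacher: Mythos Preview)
The paper does not actually prove Theorem \ref{thm_transport}: it is stated in the Appendix as a known result, with the explicit disclaimer ``We refer to Chapter 3 of \cite{B-C-D} for additional details.'' Your sketch is precisely the classical Littlewood--Paley/commutator argument carried out in that reference (localise with $\Delta_j$, use Lemma \ref{l:commutator_est}, Gr\"onwall, then Friedrichs approximation and compactness), so there is nothing to compare --- you have reproduced the intended proof. One small caveat: in Step 1 the ``multiply by $|\Delta_j f|^{p-2}\Delta_j f$'' argument needs the usual modification when $p=+\infty$ (work along characteristics, or pass to the limit $p\to+\infty$), but this is routine and handled in \cite{B-C-D}.
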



To conclude this paragraph, let us show a refinement of Theorem \ref{thm_transport}, proved by Vishik in \cite{Vis} and in a different way by Hmidi and Keraani (see \cite{H-K}). It states that, if $\div v=0$ and the Besov regularity index is $s=0$, the estimate in Theorem \ref{thm_transport} can be replaced by an inequality which is \textit{linear} with respect to $\|\nabla v\|_{L^1_T(L^\infty)}$.

\begin{theorem}\label{thm:improved_est_transport}
Given $T>0$, assume that $v$ is a  divergence-free vector field such that $\nabla v\in L^1_T(L^\infty )$ and let $g\in L^1_T(B^0_{\infty ,r})$. Take $r\in [1,+\infty ]$ and $f_0\in L^1_T(B^0_{\infty ,r})$. Then, there exists a constant $C=C(d)$ such that, for any solution $f$ to problem \eqref{general_transport} in
$C^0([0,T]; B^0_{\infty ,r})$ (or with the usual modification of $C^0$ into $C^0_w$ if $r=+\infty$), we have 
\begin{equation}
\|f\|_{L^\infty_T(B^0_{\infty ,r})}\leq C\left(\|f_0\|_{B^0_{\infty ,r}}+\|g\|_{L^1_T(B^0_{\infty ,r})}\right)\left(1+\int_0^T\|\nabla v(\tau )\|_{L^\infty}\, \detau \right)\, .
\end{equation}
\end{theorem}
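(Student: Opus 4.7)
The plan is to adapt the approach of Hmidi--Keraani \cite{H-K} (see also Vishik \cite{Vis}). A direct dyadic+commutator argument will give an inequality with a Gr\"onwall exponential factor $\exp(C\int_0^T\|\nabla v\|_{L^\infty}\detau)$; the whole point is that the divergence-free condition on $v$ together with the endpoint regularity index $s=0$ permit upgrading this to a \emph{linear} factor $1+\int_0^T\|\nabla v\|_{L^\infty}\detau$.

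First, I would localize in frequency. Applying $\Delta_j$ to the transport equation yields
\[
\partial_t \Delta_j f + v\cdot\nabla\, \Delta_j f \,=\, \Delta_j g + [v\cdot\nabla,\Delta_j] f.
\]
Since $\div v=0$, an $L^\infty$ estimate along the flow $\psi_t$ of $v$ produces
\[
\|\Delta_j f(t)\|_{L^\infty}\,\leq\, \|\Delta_j f_0\|_{L^\infty} + \int_0^t \|\Delta_j g(\tau)\|_{L^\infty}\,\detau + \int_0^t \|[v\cdot\nabla,\Delta_j]f(\tau)\|_{L^\infty}\,\detau.
\]
The key then is the zero-regularity commutator estimate: using Bony's decomposition together with $\div v=0$ (to cancel a problematic term of type $R(\div v,f)$ coming from the remainder), one proves
\[
\Bigl\|\bigl(\|[v\cdot\nabla,\Delta_j]f\|_{L^\infty}\bigr)_{j\geq-1}\Bigr\|_{\ell^r} \,\leq\, C\,\|\nabla v\|_{L^\infty}\,\|f\|_{B^0_{\infty,r}}.
\]
Taking the $\ell^r$-norm in $j$ in the previous inequality and inserting this commutator bound gives
\[
\|f(t)\|_{B^0_{\infty,r}}\,\leq\, \|f_0\|_{B^0_{\infty,r}} + \|g\|_{L^1_t(B^0_{\infty,r})} + C\int_0^t \|\nabla v(\tau)\|_{L^\infty}\,\|f(\tau)\|_{B^0_{\infty,r}}\,\detau,
\]
which by Gr\"onwall would yield only the exponential form.

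To obtain the \emph{linear} factor, I would perform a flow-straightening. Let $\psi_t$ denote the volume-preserving flow of $v$ and define $\tilde f(t,x):=f(t,\psi_t(x))$. Then $\tilde f$ satisfies the pure ODE $\partial_t \tilde f(t,x)=g(t,\psi_t(x))$, so $\|\tilde f(t)\|_{B^0_{\infty,r}}$ can be related to $\|f_0\|_{B^0_{\infty,r}}+\|g\|_{L^1_t(B^0_{\infty,r})}$ by integrating in time. The remaining step, which is the heart of the Hmidi--Keraani argument, is to compare $\|f(t)\|_{B^0_{\infty,r}}=\|\tilde f(t)\circ\psi_t^{-1}\|_{B^0_{\infty,r}}$ with $\|\tilde f(t)\|_{B^0_{\infty,r}}$: one shows, via a careful frequency analysis of how $\Delta_j(h\circ\psi_t^{\pm 1})$ compares with the dyadic blocks of $h$, that the distortion costs only a factor $1+C\int_0^t\|\nabla v\|_{L^\infty}\,\detau$ rather than its exponential. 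Combining the two estimates concludes the proof.

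The main obstacle is precisely this last comparison. The $B^0_{\infty,r}$ norm is critical in the sense that composition with a merely log-Lipschitz flow cannot in general be controlled without loss. A naive paralinearization around $\psi_t$ would reproduce the exponential factor; one must follow the sharper bookkeeping of \cite{Vis,H-K}, which exploits the fact that at the endpoint regularity $s=0$ the frequency shifts induced by $\psi_t$ can be absorbed into an $\ell^r$-summable, linearly-in-$\|\nabla v\|_{L^1_t L^\infty}$ correction. Once this is in hand, the claimed estimate follows.
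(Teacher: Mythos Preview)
The paper does not actually prove this theorem: it is stated in the Appendix as a known refinement of the standard transport estimate and attributed to Vishik \cite{Vis} and Hmidi--Keraani \cite{H-K}, with no argument given. So there is no ``paper's own proof'' to compare against; your sketch is to be measured directly against those references.

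Your outline is faithful to the Hmidi--Keraani strategy and correctly identifies the decisive point: the linear (rather than exponential) dependence on $\int_0^T\|\nabla v\|_{L^\infty}\detau$ comes not from a sharper commutator bound but from the composition lemma, namely that for a measure-preserving bi-Lipschitz map $\phi$ one has $\|h\circ\phi\|_{B^0_{\infty,r}}\leq C\bigl(1+\log(\|\nabla\phi\|_{L^\infty}\|\nabla\phi^{-1}\|_{L^\infty})\bigr)\|h\|_{B^0_{\infty,r}}$, which for $\phi=\psi_t^{\pm1}$ gives the factor $1+\int_0^t\|\nabla v\|_{L^\infty}\detau$. One small gap: in your flow-straightening step you write that $\|\tilde f(t)\|_{B^0_{\infty,r}}$ is controlled by $\|f_0\|_{B^0_{\infty,r}}+\|g\|_{L^1_t(B^0_{\infty,r})}$, but in fact $\tilde f(t,x)=f_0(x)+\int_0^t g(\tau,\psi_\tau(x))\,\detau$, so the integrand is $g(\tau)\circ\psi_\tau$, not $g(\tau)$, and you need the same composition estimate once more on this term. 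It is cleaner to skip the straightening altogether: write directly $f(t)=f_0\circ\psi_t^{-1}+\int_0^t g(\tau)\circ(\psi_\tau\circ\psi_t^{-1})\,\detau$ and apply the composition lemma to each piece; both compositions have log-Lipschitz cost bounded by $\int_0^t\|\nabla v\|_{L^\infty}\detau$, and the claimed inequality follows immediately. With that adjustment your argument is complete. The preliminary commutator discussion you give is correct but superfluous to the final proof; it only recovers the exponential bound and plays no role once the composition route is taken.
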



{\small

}

\end{document}